\newcommand{\N}{\mathbb{N}}
\newcommand{\Z}{\mathbb{Z}}
\newcommand{\R}{\mathbb{R}}
\newcommand{\T}{\mathbb{T}}
\newcommand{\Real}{\mathbb{R}}
\newcommand{\Torus}{\mathbb{T}}
\newcommand{\K}{\mathcal{K}}
\newcommand{\Ocal}{\mathcal{O}}
\newcommand{\Lcal}{\mathcal{L}}
\newcommand{\Fcal}{\mathcal{F}}
\newcommand{\tp}{\top} 
\newcommand*{\iu}{\ensuremath{\mathrm{i}}} 
\newcommand*{\itD}{\varDelta} 
\newcommand*{\dxi}{\itD\xi} 
\newcommand*{\dx}{\itD x} 
\newcommand*{\period}{L} 
\newcommand{\Rper}{\Real^n_{\text{per}}}
\newcommand{\Ltwo}{\mathbf{L}^{2}}
\newcommand{\Hone}{\mathbf{H}^1}
\DeclareMathOperator*{\id}{Id}
\newcommand{\sumZ}[2]{\sum_{#1\in\Z}#2}
\newcommand{\sumNxi}[2]{\dxi \sum_{#1 = 0}^{n-1}#2}
\newcommand{\intR}[2]{\int_{\R}#2\,d#1}
\newcommand{\abs}[1]{\left|#1\right|}
\newcommand{\normHone}[1]{\left\|#1\right\|_{\Hone}}
\newcommand{\fracpar}[2]{\frac{\partial #1}{\partial #2}}
\newcommand*{\A}[1]{\mathrm{A}[{#1}]}
\newcommand*{\D}{\mathrm{D}}
\newcommand*{\Dt}{\mathrm{D}^\tp}
\DeclareMathOperator{\tr}{tr} 
\DeclareMathOperator{\sech}{sech}
\DeclareMathOperator{\sgn}{sgn}
\newtheorem{thm}{Theorem}[section]
\theoremstyle{remark}
\newtheorem{rem}[thm]{Remark}
\theoremstyle{definition}
\newtheorem{prop}[thm]{Proposition}
\theoremstyle{definition}
\newtheorem{exmp}[thm]{Example}
\newcommand{\arxiv}[1]{\href{http://arxiv.org/abs/#1}{arXiv:#1}} 
\newcommand{\ds}{\displaystyle}
\begin{document}

\title[Numerics for  variational CH discretizations]{A numerical study of variational discretizations of the Camassa--Holm equation}

\author[S.\ T.\ Galtung]{Sondre Tesdal Galtung}
\address[S.\ T.\ Galtung]{Department of Mathematical Sciences, NTNU -- Norwegian University of Science and Technology, 7491 Trondheim, Norway}
\email{sondre.galtung@ntnu.no}

\author[K.\ Grunert]{Katrin Grunert}
\address[K.\ Grunert]{Department of Mathematical Sciences, NTNU -- Norwegian University of Science and Technology, 7491 Trondheim, Norway}
\email{katrin.grunert@ntnu.no}

\keywords{Camassa--Holm equation, two-component Camassa--Holm system, energy preserving numerical metods, particle methods}
\thanks{Research supported by the grants {\it Waves and Nonlinear Phenomena (WaNP)} and {\it Wave Phenomena and Stability --- a Shocking Combination (WaPheS)} from the Research Council of Norway.}

\begin{abstract}
	We present two semidiscretizations of the Camassa--Holm equation in periodic domains based on
	variational formulations and energy conservation. The first is a periodic version of an existing
	conservative multipeakon method on the real line, for which we propose efficient computation
	algorithms inspired by works of Camassa and collaborators.
	The second method, and of primary interest, is the periodic counterpart of a novel discretization of a
	two-component Camassa--Holm system based on variational principles in Lagrangian variables.
	Applying explicit ODE solvers to integrate in time, we compare the variational discretizations to
	existing methods over several numerical examples.
\end{abstract}
	
\maketitle

\section{Introduction}

The Camassa--Holm (CH) equation
\begin{equation}
u_{t}-u_{txx} + 3uu_{x} -2u_{x}u_{xx} - uu_{xxx} = 0,
\label{eq:CH}
\end{equation}
was presented in \cite{Camassa1993} as a model for shallow water waves, where $u = u(t,x)$ is the fluid velocity at position $x$
at time $t$, and the subscripts denote partial derivatives with respect to these variables.
Equation \eqref{eq:CH} can also be seen as a geodesic equation, see \cite{MR1674267,cons:01b,MR2016696}. This paper focuses on
numerical schemes that are inspired by this interpretation, and more specifically the flow map or Lagrangian point of view for
the equation.
We mention that the CH equation also turns up in models for hyperelastic rods \cite{MR2192287,MR1811323,MR2292515},
 and that it is known to have appeared first in \cite{Fokas1981} as a member of a family of completelyintegrable evolution equations.
Due to its rich mathematical structure and interesting properties, \eqref{eq:CH} has been widely
studied.
For instance it is bi-Hamiltonian \cite{Fokas1981}, has infinitely many conserved quantities, see, e.g., \cite{MR2125239}, and its solutions may
develop singularities in finite time even for smooth initial data, see, e.g., \cite{constantin1998global, constantin1998wave}.
Moreover, serveral extensions and generalisations  of the Camassa--Hom equation exist, but we will only consider one of them,
which is now commonly referred to as the two-component Camassa--Holm (2CH)
system. It was first introduced in \cite[Eq.\ (43)]{Olver1996}, and can be written as
\begin{equation}
\begin{cases}
u_{t} - u_{txx} + 3uu_{x} - 2u_{x}u_{xx} -uu_{xxx} + \rho \rho_{x} &= 0, \\
\rho_{t} + (\rho u)_{x} &= 0.
\end{cases}
\label{eq:2CH}
\end{equation}
That is \eqref{eq:CH} has been augmented with a term accounting for the contribution of the fluid density $\rho = \rho(t,x)$, and
paired with a conservation law for this density.

Since the paper \cite{Camassa1993} by Camassa and Holm there have been numerous works on \eqref{eq:CH}, and its extension
\eqref{eq:2CH} has also been widely studied.
Naturally, there has also been proposed a great variety of numerical methods with these equations in mind,
and here we will mention just a handful of them.
An adaptive finite volume method for peakons was introduced in \cite{NumSim_ArtSchr2006}.
In \cite{Holden2006,Coclite2008} finite difference schemes were proved to converge to dissipative solutions of
\eqref{eq:CH}, while invariant-preserving finite difference schemes for \eqref{eq:CH} and \eqref{eq:2CH} were
studied numerically in \cite{LiuPen2016}.
Pseudospectral, or Fourier collocation, methods for the CH equation were studied in \cite{Kalisch2005, Kalisch2006},
where in the latter paper the authors also proved a convergence result for the method.
In \cite{CamHuaLee2005,Camassa2008,Raynaud2006,CheLiuPen2012} the authors consider particle methods for \eqref{eq:CH} based on
its Hamiltonian formulation, which are shown to converge under suitable assumptions on the initial data.
On a related note, a numerical method based on the conservative multipeakon solution \cite{holden2007globalmp}
of \eqref{eq:CH} was presented in \cite{NumMP_HolRay2008}.
Furthermore, there have been proposed several Galerkin finite element methods for \eqref{eq:CH}:
an adaptive local discontinuous method was presented in \cite{XuShu2008}, a Hamiltonian-conserving scheme was
studied in \cite{Matsuo2010}, while \cite{AnDoMi2019} presented a Galerkin method with error estimates.
There have also been proposed more geometrically oriented methods, such as a geometric finite difference scheme
based on discrete gradient methods \cite{CohRay2011diff}, and multi-symplectic methods for both \eqref{eq:CH} and
\eqref{eq:2CH} in \cite{Cohen2008, Cohen2014}.
Moreover, \cite{CohRay2012num} presents a numerical method for \eqref{eq:CH} based on direct discretization of the equivalent
Lagrangian system of \cite{holden2007global}.
Such a list can never be exhaustive, and for more numerical schemes we refer to the most recent papers
mentioned above and the references therein.

In this paper however, we consider energy-preserving discretizations for \eqref{eq:CH} and \eqref{eq:2CH}, which are closely related to variational principles in \cite{vardisc} and \cite{holden2007globalmp}.
In particular, we are interested in studying how well the discretizations in \cite{vardisc} and \cite{holden2007globalmp} serve as numerical methods.
To this end, we will consider the initial value problem of \eqref{eq:2CH}, with periodic boundary conditions in
order to obtain a computationally viable numerical scheme, i.e.,
\begin{equation}
\begin{cases}
u_{t} - u_{txx} + 3uu_{x} - 2u_{x}u_{xx} -uu_{xxx} + \rho\rho_{x} = 0, & (t,x) \in \R_{+} \times \T, \\
\rho_{t} + (\rho u)_{x} = 0, & (t,x) \in \R_{+} \times \T, \\
u(0,x) = u_{0}(x), & x \in \T, \\
\rho(0,x) = \rho_{0}(x), & x \in \T.
\end{cases}
\label{eq:2CH_IVP}
\end{equation}
Here $\T$ denotes some one-dimensional torus, and we assume $u_0 \in \Hone(\T)$ and $\rho_0 \in \Ltwo(\T)$.
Observe that the choice $\rho_{0}(x) \equiv 0$ in \eqref{eq:2CH_IVP} yields the initial value problem
for \eqref{eq:CH}.

One of the hallmarks of the CH equation, and also the 2CH system,  is the fact that even for smooth initial data, its solutions
can develop singularities, also known as wave breaking.
Specifically, this means that the wave profile $u$ remains bounded, while the slope $u_x$ becomes unbounded from below. At the same time energy may concentrate on sets of measure zero. 
This scenario is now well understood and has been described in
\cite{constantin1998global,constantin1998wave,constantin2000global,Grunert2015}.
A fully analytical description of a solution which breaks is provided by the peakon-antipeakon example, see
\cite{holden2007globalmp}.
An important motivation for the discretizations derived in \cite{vardisc} and  \cite{holden2007globalmp} was for them to be able to handle
such singularity formation, and we will see examples of this in our final numerical simulations.

The variational derivation of the equation as a geodesic equation is based on Lagrangian variables,
and the Lagrangian framework is an essential ingredient in the construction of global conservative solutions, see
\cite{bressan2007global,holden2007global,Grunert2012}. The other essential ingredient is the addition of an extra energy
variable to the system of governing equations, which tracks the concentration of energy on sets of measure zero.
Later we will see that these ingredients have all been accounted for in our discretization. 

Next we will outline how the variational derivation of the CH equation is carried out in the periodic setting,
before we turn to our discrete methods.
In our setting, we take the period to be $\period > 0$ such that
\begin{equation*}
u(t,x+\period) = u(t,x), \qquad \rho(t,x+\period) = \rho(t,x)
\end{equation*}
for $t \ge 0$.
We introduce the characteristics $y(t,\xi)$ and the Lagrangian variables
\begin{align*}
y_t(t,\xi) &= u(t,y(t,\xi)) \eqqcolon U(t,\xi), \qquad r(t,\xi) \coloneqq \rho(t,y(t,\xi)) y_\xi(t,\xi).
\end{align*}
Furthermore, we require the periodic boundary conditions
\begin{equation}
y(t,\xi+\period) = y(t,\xi)+\period, \quad U(t,\xi+\period) = U(t,\xi), \quad r(t,\xi+\period) = r(t,\xi).
\label{eq:periodic_cont}
\end{equation}

Let us ignore $r$ for the moment by setting $r \equiv 0$, which corresponds to studying the CH equation.
A rather straight-forward discretization of the above variables comes from replacing the continuous parameter $\xi$
by a discrete parameter $\xi_i$ for $i$ in a set of indices.
The pairs $(y_i, U_i)$ can then be considered as position and velocity pairs for a set of discrete particles.
We want to derive the governing equations of the discrete system from an Euler--Lagrange principle.
The system of equations will thus be fully determined once we have a corresponding Lagrangian $\Lcal(y, U)$.

We base the construction of the discrete Lagrangian on the continuous case. For the CH equation, a Lagrangian
formulation is already available from the variational derivation of the equation. Let us briefly review this derivation. The
motion of a particle, labeled by the variable $\xi$, is described by the function $y(t, \xi)$. The velocity of the particle is
given by $y_t(t, \xi) = U(t, \xi)$.
The Eulerian velocity $u$ is given in the same reference frame through $u(t, y(t,\xi)) = U(t,\xi)$, and the
energy is given by a scalar product in the Eulerian frame.
For the CH equation, the scalar product $\left<\cdot,\cdot\right>$ is given by the $\Hone$-norm
\begin{equation}
\label{eq:contscalprod}
\left< u, u \right> = \int_\Torus (u^2 + u_x^2)\,dx  = \int_\Torus \left(y_t^2y_\xi + \frac{y_{t\xi}^2}{y_\xi}\right) d\xi.
\end{equation}
Other choices of the scalar product lead to the Burgers or Hunter--Saxton equations, see Table \ref{tab:eqsum}.
\begin{table}[h]
  \centering
  \begin{tabular}[t]{r|ccc}
    name & Burgers & Hunter--Saxton & Camassa--Holm\\
    \hline
    equation & $u_t + 3uu_x = 0$ & $(u_t + uu_x)_x = \frac12u_x^2$ & \eqref{eq:CH} \\
    (semi)-norm & $\int_\R u^2\,dx$ &$\int_\R u_x^2\,dx$ &$\int_\R u^2 + u_x^2\,dx$ \\
    momentum & $u$ & $u_{xx}$ & $u - u_{xx}$\\
    soliton-like solutions & not defined & piecewise linear & multi-peakons
  \end{tabular}
  \caption{Summary of norms and corresponding soliton-like solutions.}
  \label{tab:eqsum}
\end{table}
From the scalar product, we define the momentum $m$ as the function which satisfies
\begin{equation}
\label{eq:defmomcont}
\left< u, v \right> = \int_\Torus m(x)v(x)\,dx,
\end{equation}
for all $v$. Note that this scalar product is invariant with respect to relabeling of the particles,
or right invariant in the terminology of \cite{arnold1999topological}.
This means that for any diffeomorphism, also called relabeling function, $\phi(\xi)$, the transformation
$y\mapsto y\circ \phi$ and $U\mapsto U\circ\phi$ leaves the energy invariant:
\begin{equation*}
\left< u, u \right> = \left< U\circ\phi\circ (y\circ \phi)^{-1}, U\circ\phi\circ (y\circ \phi)^{-1} \right>
=  \left< U\circ y^{-1}, U\circ y^{-1} \right>.
\end{equation*}
By Noether's theorem, this invariance leads to the conserved quantity
\begin{equation*}
m_\text{c} = m\circ y y_\xi^2,
\end{equation*}
which is presented as the first Euler theorem in \cite{arnold1999topological}. We can recover the governing equation using the
conserved quantity $m_\text{c}$. We have
\begin{align*}
\fracpar{}{t}(m\circ y y_\xi^2)  = m_t\circ y y_\xi^2 + m_x\circ y y_ty_\xi^2 + m\circ yy_{\xi,t}y_{\xi} = 0.
\end{align*}
We use the definition of $u$ as $y_t = u\circ y$ and, after simplification, we obtain
\begin{equation*}
m_t + m_xu + 2mu_x = 0,
\end{equation*}
which is exactl´y \eqref{eq:CH}.

One method for discretizing the CH equation comes from its multipeakon solution, as studied in \cite{holden2007globalmp}.
This solution of \eqref{eq:CH} is a consequence of that the class of functions of the form
\begin{equation*}
m(t,x) = (u - u_{xx})(t,x) = \sum_{i = 1}^{n}U_i(t)\delta(x  - y_i(t))
\end{equation*}
is preserved by the equation.
By deriving an ODE system for $y_i$ and $U_i$ which define the position and height of the peaks, we can deduce their values at
any time $t$.
Then, given the points $(y_i,U_i)$ for $i \in \{1,\dots,n\}$, we can reconstruct the solution on the whole line by joining these
points with linear combinations of the exponentials $e^x$ and $e^{-x}$.
For the new scheme, we use instead a linear reconstruction, which is also the standard approach in finite difference methods.
In this case we approximate the energy in Lagrangian variables using finite differences for $y_i$ and $U_i$, and then
the corresponding Euler--Lagrange equation defines their time evolution.
Finally, we apply a piecewise linear reconstruction to interpolate $(y_i,U_i)$ for $i \in \{1,\dots,n\}$.

Comparing these two reconstruction methods, we face a trade-off in how we interpolate the points $(y_i,U_i)$.
Although the piecewise exponential reconstruction provides an exact solution of \eqref{eq:CH}, one may, in absence of additional
information on the initial data, consider it less natural to use these catenary curves to join the points instead of the
more standard linear interpolation.
On the other hand, linear reconstruction may approximate the initial data better, but an additional error is introduced
since piecewise linear functions are not preserved by the equation, see Figure \ref{fig:illus}.
\begin{figure}[h]
  \centering
  \includegraphics[width=0.6\textwidth]{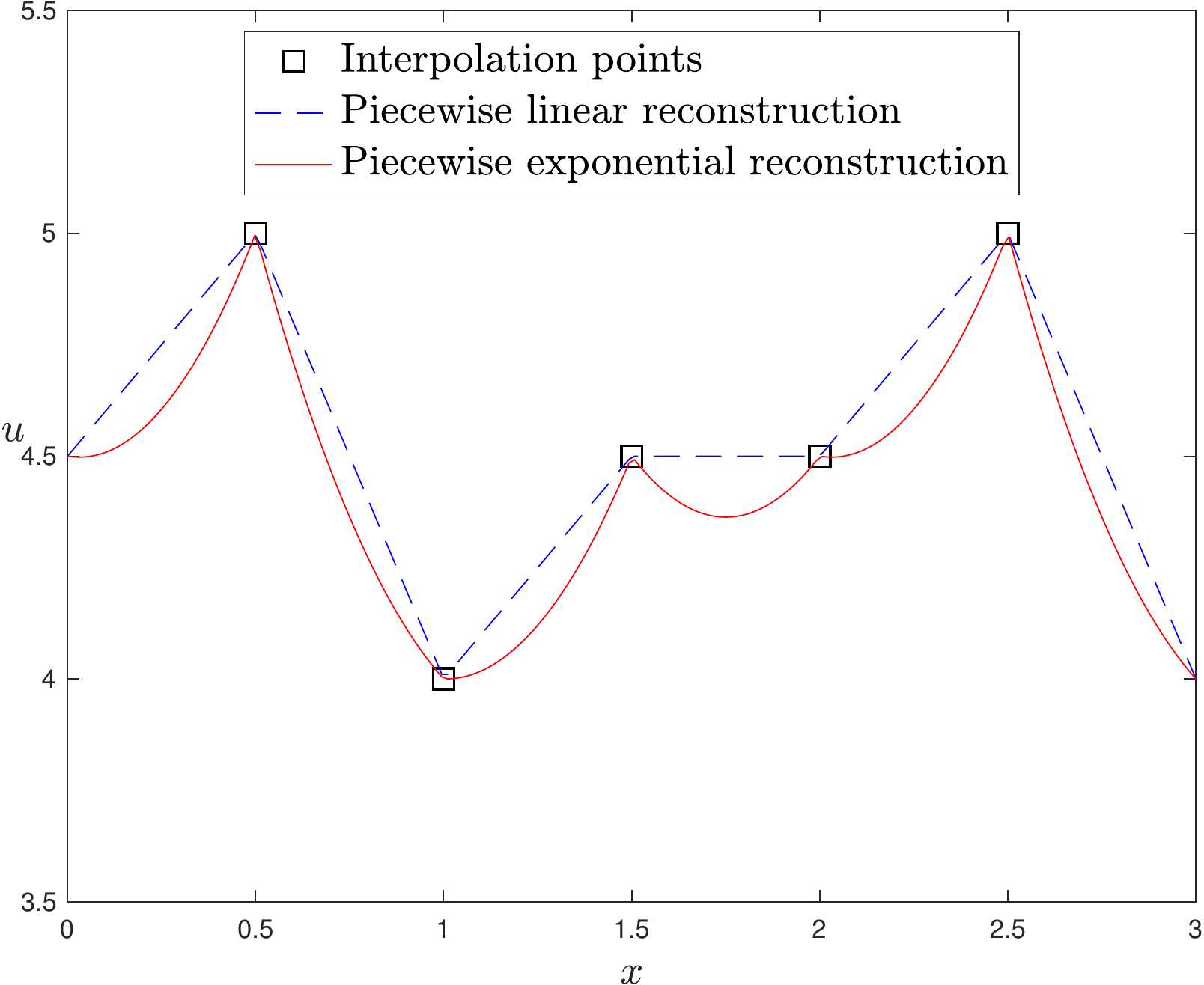}
  \caption{Piecewise exponential and linear reconstruction.}
  \label{fig:illus}
\end{figure}

Note that multipeakon solutions are not available in the case
$\rho\neq 0$, cf.\ \cite{ConsIvan2008}, and so the method based on linear reconstruction is the only scheme presented here
for the 2CH system which is based on variational principles in Lagrangian coordinates.
We remark that for the Hunter--Saxton equation, the soliton-like solutions
are piecewise linear, being solutions of $u_{xx} =\sum_{i \in \Z}U_i\delta(x - y_i)$.
Thus, the linear and the exact soliton reconstruction coincide for the Hunter--Saxton equation.
As a matter of fact, in \cite{grunert2020numerical} there has recently been developed a fully discrete numerical
method for conservative solutions of the Hunter--Saxton equation which is primarily set in Eulerian coordinates,
but employs characteristics to handle wave breaking.

The rest of this paper is organized as follows.
In Section \ref{s:MP} we briefly recall the conservative multipeakon method introduced in
\cite{holden2007globalmp}, where a finite set of peakons serve as the particles discretizing the CH equation,
and outline how the corresponding system is derived for the periodic case.
Moreover, we present efficient algorithms for computing the right-hand sides of their respective
ODEs, which are inspired by the fast summation algorithms of Camassa et al.\ for their particle methods
\cite{CamHuaLee2005,Camassa2008}.
Section \ref{s:VD} describes the new variational scheme in detail, and some emphasis is put on deriving
fundamental solutions for a discrete momentum operator, which in turn allows for collisions between
characteristics and hence wave breaking.
Finally, in Section \ref{s:experiments} we very briefly describe the methods we have chosen to compare with,
before turning to a series of numerical examples of both quantitative and qualitative nature.


\section{Conservative multipeakon scheme}\label{s:MP}

An interesting feature of \eqref{eq:CH} on the real line is that it admits so-called multipeakon solutions, that is, solutions of the form
\begin{equation} \label{eq:mp_u} u(t,x) = \sum_{i=1}^{n}p_i(t) e^{-\abs{x-q_i(t)}}
\end{equation}
defined by the ODE system
\begin{align}\label{eq:mp_qp}
\begin{aligned}
\dot{q}_i &= \sum_{j=1}^{n}p_j e^{-\abs{q_i -q_j}}, \\
\dot{p}_i &= p_i \sum_{j=1}^{n}\sgn(q_i-q_j) e^{-\abs{q_i -q_j}}
\end{aligned}
\end{align}
for $i \in \{1,\dots,n\}$, where $q_i$ and $p_i$ can respectively be seen as the position and momentum of
a particle labeled $i$.
In this sense, $q_i$ is analogous to the discrete characteristic $y_i$ in the previous section.
Several authors have studied the discrete system \eqref{eq:mp_qp}, in particular Camassa and collaborators who
named it an integrable particle method, see for
instance \cite{Camassa2003, CamHuaLee2005, Raynaud2006}. This system is Hamiltonian, and one of its hallmarks is
that for initial data satisfying $q_i \neq q_j$ for $i \neq j$ and all $p_i$ having the same sign, one can find an
explicit Lax pair, meaning the discrete system is in fact integrable.
The Lax pair also serves as a starting point for studying general conservative multipeakon solutions with the help
of spectral theory, see \cite{EckKos2014, EckKos2018}.

System \eqref{eq:mp_qp} is however not suited as a numerical method for extending solutions beyond the
collision of particles, which for instance occurs for the two peakon initial data with $q_1 < q_2$, and
$p_2 < 0 < p_1$.
Indeed, as $\abs{q_2 - q_1} \to 0$, the momenta blow up as $(p_1,p_2) \to (+\infty,-\infty)$, cf.~\cite{Wahlen2006,GruHol2016}.
Even though this happens at a rate such that the associated energy remains bounded, unbounded solution variables
are not well suited for numerical computations.
One alternative way of handling this is to include an algorithm which transfers momentum between particles which are
close enough according to some criterion, see for instance \cite{CheLiuPen2015}.
However, we prefer to use the method presented next, where a different choice of variables, which remain bounded at
collision-time, is introduced.

\subsection{Real line version}
In \cite{holden2007globalmp} the authors propose a method for computing conservative multipeakon solutions
of the CH equation \eqref{eq:CH}, based on the observation that between the peaks located at $q_i$ and $q_{i+1}$
in \eqref{eq:mp_u}, $u$ satisfies the boundary value problem $u-u_{xx} = 0$ with boundary
conditions $u(t,q_i) \eqqcolon u_i(t)$ and $u(t,q_{i+1}(t)) \eqqcolon u_{i+1}(t)$. 
Moreover, from the transport equation for the
energy density one can derive the time evolution of $H_i$ which denotes the cumulative energy up to the point $q_i$.  Using $y_i$
instead of $q_i$ to denote the $i$\textsuperscript{th} characteristic we then obtain the discrete system
\begin{align} \label{eq:cmp_line}
\begin{aligned}
\dot{y}_i &= u_i, \\
\dot{u}_i &= -Q_i, \\
\dot{H}_i &= u_i^3 - 2P_i u_i
\end{aligned}
\end{align}
for $i \in \{1,\dots,n\}$ with
\begin{align*}
P_i &= \frac12 \intR{x}{e^{-\abs{y_i-x}}\left(u^2 + \frac12 u_x^2\right)}, \\
Q_i &= -\frac12 \intR{x}{\sgn(y_i-x) e^{-\abs{y_i-x}}\left(u^2 + \frac12 u_x^2\right)}.
\end{align*}
We note that the solution $u$ is of the form $u(t,x) = A_i(t) e^x + B_i(t) e^{-x}$ between the peaks $y_i$ and
$y_{i+1}$ with coefficients
\begin{equation*}
A_i = \frac{e^{-\bar{y}_i}}{2} \left[ \frac{\bar{u}_i}{\cosh(\delta y_i)} + \frac{\delta u_i}{\sinh(\delta y_i)} \right], \qquad B_i = \frac{e^{\bar{y}_i}}{2} \left[ \frac{\bar{u}_i}{\cosh(\delta y_i)} - \frac{\delta u_i}{\sinh(\delta y_i)} \right],
\end{equation*}
and where we for any grid function $\{v_i\}_{i=0}^{n}$ have defined
\begin{equation}\label{eq:avg}
\bar{v}_i = \frac{v_{i+1}+v_i}{2}, \qquad \delta v_i = \frac{v_{i+1} - v_i}{2}.
\end{equation}
In order to compute the solution for $x < y_1$ and $x > y_n$, one also introduces the convention $(y_0,u_0) = (-\infty,0)$
and $(y_{n+1},u_{n+1}) = (\infty,0)$.
We also have the relation
\begin{equation*}
\delta H_i(t) = \frac{H_{i+1}(t) - H_i(t)}{2} = \frac12 \int_{y_i(t)}^{y_{i+1}(t)} \left(u^2(t,x) +
u_x^2(t,x)\right)dx,
\end{equation*}
which can be computed as
\begin{equation}\label{eq:delH}
\delta H_i = \bar{u}_i^2 \tanh(\delta y_i) + (\delta u_i)^2 \coth(\delta y_i).
\end{equation}
Here we emphasize that the total energy $H_{n+1}$ is then given by
\begin{equation*}
H_{n+1} = 2 \sum_{i=0}^{n}\delta H_i,
\end{equation*}
since $H_0 = 0$.
Due to the explicit form of $u$ we may compute $P$ and $Q$ as
\begin{equation}
P_i = \sum_{j=0}^{n} P_{ij}, \qquad Q_i = \sum_{j=0}^{n} Q_{ij}, \label{eq:cmp_PQ}
\end{equation}
with $Q_{ij} = -\sigma_{ij} P_{ij}$ and
\begin{equation} \label{eq:cmp_Pij}
P_{ij} = \begin{cases}
\displaystyle \frac14 u_1^2 e^{y_1 - y_i}, & j = 0, \\
\displaystyle \frac{e^{-\sigma_{ij}(y_i - \bar{y}_j)}}{2 \cosh(\delta y_j)} \left[ \delta H_j \cosh^2(\delta y_j) \right. \\
\quad\left.+ 2 \sigma_{ij} \bar{u}_j \delta u_j \sinh^2(\delta y_j) + \bar{u}_j^2 \tanh(\delta y_j) \right], & j \in \{1,\dots,n-1\}, \\
\displaystyle \frac14 u_n^2 e^{y_i - y_n}, & j = n,
\end{cases}
\end{equation}
where we have defined
\begin{equation*}
\sigma_{ij} = \begin{cases}
-1, & j \ge i, \\
1, & j < i.
\end{cases}
\end{equation*}
For details on how such multipeakons can be used to obtain a numerical scheme for \eqref{eq:CH} we refer to
\cite{NumMP_HolRay2008}.

\subsubsection{Fast summation algorithm}
We will present a periodic version of the above method to compare with our variational scheme.
Before that, we note that the above method can be computationally expensive if one naively computes
\eqref{eq:cmp_Pij} for each $i$ and $j$, amounting to a complexity of $\Ocal(n^2)$ for computing the right-hand
side of \eqref{eq:cmp_line}.
Inspired by \cite{CamHuaLee2005} and borrowing their terminology we shall propose a fast summation
algorithm for computing \eqref{eq:cmp_PQ} with complexity $\Ocal(n)$.
Indeed, this can be done by noticing that our $P_i$ and $Q_i$ share a similar structure with the right-hand sides of
\eqref{eq:mp_qp}.
To this end we make the splittings
\begin{equation*}
P_i = \sum_{j=0}^{i-1} P_{ij} + \sum_{j=i}^{n}P_{ij} \eqqcolon f^{\text{l}}_i + f^{\text{r}}_i, \qquad Q_i = -f^{\text{l}}_i + f^{\text{r}}_i,
\end{equation*}
and note that $f^{\text{l}}_i$ and $f^{\text{r}}_i$ satisfy the recursions
\begin{equation*}
f^{\text{l}}_{i+1} = e^{y_i-y_{i+1}} f^{\text{l}}_i + e^{y_i -\bar{y}_i} (a_i + b_i) = e^{-2 \delta y_i} f^{\text{l}}_i + e^{-\delta y_i} (a_i + b_i)
\end{equation*}
and
\begin{equation*}
f^{\text{r}}_i = e^{y_i-y_{i+1}} f^{\text{r}}_{i+1} + e^{y_i -\bar{y}_i} (a_i - b_i) = e^{-2 \delta y_i} f^{\text{l}}_{i+1} + e^{-\delta y_i} (a_i - b_i)
\end{equation*}
for $i \in \{1,\dots,n-1\}$ where we have defined
\begin{equation}\label{eq:fsa_ab}
a_j \coloneqq \frac{ \delta H_j \cosh^2(\delta y_j)
  + \bar{u}_j^2 \tanh(\delta y_j) }{2\cosh(\delta y_j)}, \qquad b_j \coloneqq \frac{ \bar{u}_j \delta u_j \sinh^2(\delta y_j) }{\cosh(\delta y_j)}.
\end{equation}
Moreover we have the starting points for the recursions given by
\begin{equation*}
f^{\text{l}}_1 = \frac14 u_1^2, \qquad f^{\text{r}}_n = \frac14 u_n^2.
\end{equation*}
Clearly, computing $f^{\text{l}}$ and $f^{\text{r}}$ recursively is of complexity $\Ocal(n)$, while adding and
subtracting them to produce $P$ and $Q$ is also of complexity $\Ocal(n)$, which yields the desired result.

\subsection{Periodic version}
Now for the periodic version of \eqref{eq:cmp_line} there are only a few modifications needed.  First of all we have to replace
the ``peakons at infinity'' given by $(y_0, u_0) = (-\infty,0)$ and $(y_{n+1},u_{n+1}) = (\infty,0)$ which in some sense define
the domain of definition for the solution.  The new domain will instead be located between the ``boundary peakons''
$(y_0,u_0) = (y_n-L,u_n)$ and $(y_n,u_n)$.  Thus, we are still free to choose $n$ peakons, but we impose periodicity by
introducing an extra peakon at $y_n-L$ with height $u_n$.  We also have to redefine $H_i$, which now will denote the energy
contained between $y_0$ and $y_i$.
Thus we have $H_0 = 0$, $H_n$ is the total energy of an interval of length $L$,
while each $H_i$ can be computed as
\begin{equation}\label{eq:cmp_H}
H_i = 2\sum_{j=0}^{i-1}\delta H_j, \qquad j \in \{1,\dots,n\}
\end{equation}
with $\delta H_i$ defined in \eqref{eq:delH}.
In addition, since the energy is now integrated over the interval $[y_0, y_i]$ we have to replace the evolution equation for $H_i$
with
\begin{equation*}
\dot{H}_i = u_i (u_i^2 -2P_i) - u_0(u_0^2 -2P_0) = u_i (u_i^2 -2P_i) - u_n(u_n^2 -2P_n),
\end{equation*}
where the last identity follows from the periodicity of $P_i$ by virtue of $\delta y_i$, $u_i$, and
$\delta H_i$ being $n$-periodic.

Moreover, we have to replace $e^{-\abs{y-x}}$ in $P_i$ and $Q_i$ with its periodic counterpart
\begin{equation}\label{eq:ker_per}
\sum_{m = -\infty}^{\infty}e^{-\abs{y-(x+m \period)}} = \frac{\cosh(\abs{y-x}-\frac{\period}{2})}{\sinh\left(\frac{\period}{2}\right)}, \qquad \abs{y-x} \le \period,
\end{equation}
and we now integrate over $[y_0,y_n]$ instead of $\R$.
This is analogous to the derivation of the periodic particle method in \cite{Camassa2008}, and the numerical
results of \cite{Raynaud2006}.
The courageous reader may verify that the calculations in \cite{holden2007globalmp} can be reused to a great extent.
In the end we find that the expressions for $P_i$ and $Q_i$ are essentially the same, we only need to replace each occurrence of $e^{-\sigma_{ij}(y_i - \bar{y}_j)}$
and its ``derivative'' with respect to $y_i$, $-\sigma_{ij} e^{-\sigma_{ij}(y_i - \bar{y}_j)}$, with
\begin{equation*}
\frac{ \cosh\left( \sigma_{ij} (y_i - \bar{y}_j) -\frac{L}{2}\right)}{\sinh\left(\frac{\period}{2}\right)}
\quad\text{and its ``derivative''}\quad
\sigma_{ij} \frac{ \sinh\left( \sigma_{ij} (y_i - \bar{y}_j) -\frac{L}{2}\right)}{\sinh\left(\frac{\period}{2}\right)},
\end{equation*}
respectively.
To be precise, $P_i$ and $Q_i$ are given by
\begin{equation}\label{eq:cmp_perPQ}
P_i = \sum_{j=0}^{n-1}P_{ij}, \qquad Q_i = \sum_{j=0}^{n-1}Q_{ij}
\end{equation}
for $i \in \{1,\dots,n\}$, with
\begin{align} \label{eq:cmp_perPij}
\begin{aligned}
P_{ij} &= \frac{\cosh\left( \sigma_{ij} (y_i - \bar{y}_j) -\frac{L}{2}\right)
  \left[ \delta H_j \cosh^2(\delta y_j) + \bar{u}_j^2 \tanh(\delta y_j) \right] }{2 \cosh(\delta y_j) \sinh\left(\frac{L}{2}\right)} \\
&\quad- \sigma_{ij} \frac{\sinh\left( \sigma_{ij} (y_i - \bar{y}_j) -\frac{L}{2}\right)
  \bar{u}_j \delta u_j \sinh^2(\delta y_j) }{\cosh(\delta y_j) \sinh\left(\frac{L}{2}\right)}
\end{aligned}
\end{align}
and
\begin{align} \label{eq:cmp_perQij}
\begin{aligned}
Q_{ij} &= \sigma_{ij} \frac{\sinh\left( \sigma_{ij} (y_i - \bar{y}_j) -\frac{L}{2}\right)
  \left[ \delta H_j \cosh^2(\delta y_j) + \bar{u}_j^2 \tanh(\delta y_j) \right] }{2 \cosh(\delta y_j) \sinh\left(\frac{L}{2}\right)} \\
&\quad- \frac{\cosh\left( \sigma_{ij} (y_i - \bar{y}_j) -\frac{L}{2}\right)
  \bar{u}_j \delta u_j \sinh^2(\delta y_j) }{\cosh(\delta y_j) \sinh\left(\frac{L}{2}\right)},
\end{aligned}
\end{align}
for $j \in \{1,\dots,n-1\}$.
To summarize, the periodic system reads
\begin{align} \label{eq:cmp_per}
\begin{aligned}
\dot{y}_i &= u_i, \\
\dot{u}_i &= -Q_i, \\
\dot{H}_i &= u_i (u_i^2 -2P_i) - u_n(u_n^2 -2P_n)
\end{aligned}
\end{align}
for $i \in \{1,\dots,n\}$, and $P_i$ and $Q_i$ defined by \eqref{eq:cmp_perPQ}, \eqref{eq:cmp_perPij},
and \eqref{eq:cmp_perQij}.

\begin{exmp}
  (i) Following \cite[Ex.\ 4.2]{holden2007globalmp} we set $n = 1$,
  and use the periodicity to find $\bar{y}_0 = y_1 - \period/2$, $\delta y_0 = \period/2$, $\bar{u}_0 = u_1$, $\delta u_1 = 0$,
  and $\delta H_0 = u_1^2 \tanh(\period/2)$. 
  Plugging into \eqref{eq:cmp_perPij} and \eqref{eq:cmp_perQij} we find
  \begin{equation*}
  P_{1} = \frac{u_1^2}{2} \left( 1 + \sech^2\left(\frac{\period}{2}\right) \right), \qquad Q_1 = 0.
  \end{equation*}
  Then \eqref{eq:cmp_per} yields $\dot{u}_1 = 0$ and $\dot{H}_1 = 0$, and setting $u_1(t) \equiv c$ we
  obtain $y_1(t) = y_1(0) + c t$. This shows that for $n = 1$, in complete analogy to the real line case, the
  evolution equation for $H_1$ decouples from the other equations, and we find that a periodic peakon travels with
  constant velocity $c$ equal to its height at the peak.
  
  (ii) With substantially more effort compared to (i) we could also consider $n = 2$ with antisymmetric initial
  datum for $u$ to recover the periodic peakon-antipeakon solution computed in
  \cite[pp.~ 5505--10]{Cohen2008}.
\end{exmp}

\subsubsection{Fast summation algorithm}
Drawing further inspiration from \cite{Camassa2008} we propose a fast summation algorithm for
the periodic scheme as well, and following their lead we use the infinite sum rather than the hyperbolic function
representation of the periodic kernel. Using geometric series we find
\begin{equation*}
\sum_{m = -\infty}^{\infty}e^{-\abs{y-(x+m \period)}} = \frac{e^{-\period}}{1-e^{-\period}}e^{x-y} + e^{-\abs{y-x}} + \frac{e^{-\period}}{1-e^{-\period}}e^{y-x},
\end{equation*}
valid for $\abs{x-y} \le \period$.
Then, replacing $e^{-\sigma_{ij}(y_i-\bar{y}_j)}$ in \eqref{eq:cmp_Pij} with
\begin{equation*}
\frac{e^{-\period}}{1-e^{-\period}}e^{\bar{y}_j-y_i} + e^{-\sigma_{ij}(y_i-\bar{y}_j)} + \frac{e^{-\period}}{1-e^{-\period}}e^{y_i-\bar{y}_j}
\end{equation*}
we find that the periodic $P_i$ and $Q_i$ can be written
\begin{equation*}
P_i = g^{-}_i + f^{\text{l}}_i + f^{\text{r}}_i + g^{+}_i, \qquad Q_i = f^{\text{r}}_i + g^{+}_i - g^{-}_i - f^{\text{l}}_i,
\end{equation*}
where in analogy to the full line case we have defined
\begin{equation*}
f^{\text{l}}_i  \coloneqq \sum_{j=0}^{i-1}e^{-y_i+\bar{y}_j} (a_j + b_j), \qquad f^{\text{r}}_i \coloneqq \sum_{j=i}^{n-1}e^{y_i-\bar{y}_j} (a_j - b_j),
\end{equation*}
in addition to
\begin{equation*}
g^{-}_i \coloneqq \frac{e^{-\period}}{1-e^{-\period}} \sum_{j=0}^{n-1} e^{-y_i+\bar{y}_j} (a_j + b_j), \qquad g^{+}_i \coloneqq \frac{e^{-\period}}{1-e^{-\period}} \sum_{j=0}^{n-1} e^{y_i-\bar{y}_j} (a_j - b_j),
\end{equation*}
with $a_j$ and $b_j$ defined in \eqref{eq:fsa_ab}.
Defining $g^{\text{l}}_i \coloneqq g^{-}_i + f^{\text{l}}_i$ and $g^{\text{r}}_i \coloneqq g^{+}_i + f^{\text{r}}_i$,
these functions satisfy the recursions
\begin{equation*}
g^{\text{l}}_{i+1} = e^{-2\delta y_i} g^{\text{l}}_i + e^{-\delta y_i} (a_i + b_i), \qquad g^{\text{l}}_1 = g^{-}_1 + e^{-\delta y_0} (a_0 + b_0),
\end{equation*}
and
\begin{equation*}
g^{\text{r}}_i = e^{-2\delta y_i} g^{\text{r}}_{i+1} + e^{-\delta y_i} (a_i - b_i), \qquad g^{\text{r}}_n = g^{-}_n
\end{equation*}
for $i \in \{1,\dots,n-1\}$.  Once more, the recursion allows us to compute $P_i$ and $Q_i$ with complexity $\Ocal(n)$ rather than
$\Ocal(n^2)$ for the naive computation of each distinct $P_{ij}$ and $Q_{ij}$ in \eqref{eq:cmp_perPij} and \eqref{eq:cmp_perQij}.

\section{Variational finite difference Lagrangian discretization}\label{s:VD}
Here we describe the method which is based on a finite difference discretization in Lagrangian coordinates, as derived in
\cite{vardisc}.

Denoting the number of grid cells $n \in \N$, we introduce the grid points $\xi_i = i \dxi$ for $i \in \{0,\dots,n-1\}$ and step size $\dxi > 0$ such that $n \dxi = \period$.
These will serve as ``labels'' for our discrete characteristics $y_i(t)$ which can be regarded as approximations of $y(t,\xi_i)$.  In a similar
spirit we introduce $U_i(t)$ and $r_i(t)$.
For our discrete variables, the periodicity in the continuous case \eqref{eq:periodic_cont} translates into
\begin{equation}
y_{i+n}(t) = y_{i}(t) + \period, \quad  U_{i+n}(t) = U_{i}(t), \quad r_{i+n}(t) = r_i(t).
\label{eq:periodic_disc}
\end{equation}
For a grid function $f = \{f_i\}_{i \in \Z}$ we introduce the forward difference operator $\D_+$ defined by
\begin{equation}\label{eq:diff}
\D_\pm f_i = \pm\frac{f_{i \pm 1} - f_i}{\dxi}, \quad \D_0 f_i = \frac{f_{i+1} - f_{i-1}}{2\dxi},
\end{equation}
where we also have included the backward and central differences for future reference.
We will use the standard Euclidean scalar product in $\R^n$ scaled by the grid cell size $\dxi$ to obtain
a Riemann sum approximation of the integral on $\T$.
Moreover, we introduce the space $\Rper$ of sequences $v = \{v_j\}_{j\in\Z}$ satisfying $v_{j+n} = v_{j}$,
and which is isomorphic to $\R^n$.
For $n$-periodic sequences, the adjoint (or transpose) $\D^\tp$ of the discrete difference operator $\D$
is defined by the relation
\begin{equation*}
\sumNxi{i}{(\Dt v_i)w_i } = \sumNxi{i}{v_i (\D w_i) }, \quad v, w \in \Rper.
\label{eq:adjoint}
\end{equation*}
For instance, summation by parts shows that the differences in \eqref{eq:diff} satisfy $\D_\pm^{\top} = -\D_{\mp}$
and $\D_0^{\top} = -\D_0$.

The variational derivation of the scheme for the CH equation \eqref{eq:CH} is based on an approximation of the energy given by
\begin{equation}\label{eq:disc_energy}
E \coloneqq \frac12 \sumNxi{i}{\left((\dot{y}_{i})^2 (\D_+ y_{i}) + \frac{(\D_+ \dot{y}_{i})^2}{\D_+ y_{i}}\right)},
\end{equation}
which corresponds to \eqref{eq:contscalprod} in the continuous case. Following \cite{vardisc} we obtain a semidiscrete system
which is valid also in the periodic case, namely
\begin{equation}\label{eq:semidisc_sys}
\begin{cases}
\dot{y}_i &= U_i, \\
(\D_+ y_{i}) \dot{U}_{i} - \D_- \left(\frac{\D_+ \dot{U}_{i}}{\D_+ y_{i}}\right) &= -U_{i} (\D_+ U_{i}) - \frac{1}{2} \D_-\left( U_{i}^2 + \left(\frac{\D_+ U_{i}}{\D_+ y_{i}}\right)^2  \right)
\end{cases}
\end{equation}
for initial data $y_i(0) = (y_0)_i$ and $U_i(0) = (U_0)_i$, and indices $i \in \{0,\dots,n-1\}$.
Observe that in solving \eqref{eq:semidisc_sys} we obtain approximations of the fluid velocity in Lagrangian variables since $y_i(t) \approx y(t,\xi_i)$ and
$U_{i}(t) \approx u(t,y(t,\xi_{i}))$.

For the 2CH system \eqref{eq:2CH} one has the identity
\begin{equation*}
r(t,\xi) = r(0,\xi) = \rho(0,y(0,\xi))y_\xi(0,\xi)
\end{equation*}
in the continuous setting.
Based on this we introduce the discrete identity
\begin{equation*}
\rho_i(t) \D_+ y_i(t) = \rho_i(0) \D_+ y_i(0),
\end{equation*}
which allows us to express the discrete density $\rho_i(t)$ as a function of $\D_+ y_i(t)$ and the initial data.
Accordingly, we have the approximate relation $\rho(t,y(t,\xi_i)) \approx (\rho_0)_i \D_+(y_0)_i / \D_+y_i(t)$.

Furthermore, the energy of the discrete 2CH system contains an additional term compared to \eqref{eq:disc_energy}, and reads
\begin{equation*}
E \coloneqq \frac12 \sumNxi{i}{\left((\dot{y}_{i})^2 (\D_+ y_{i}) + \frac{(\D_+ \dot{y}_{i})^2}{\D_+ y_{i}} + \frac{(\rho_0)_i \D_+(y_0)_i}{\D_+y_{i}}\right)^2 }.
\end{equation*}
As a consequence, the semidiscrete system for the 2CH system is the same as \eqref{eq:semidisc_sys}, except
that right-hand side of the second equation now becomes
\begin{equation*}
-U_{i} (\D_+ U_{i}) - \frac{1}{2} \D_-\left( U_{i}^2 + \left(\frac{\D_+ U_{i}}{\D_+ y_{i}}\right)^2 + \frac{(\rho_0)_i \D_+(y_0)_i}{\D_+y_{i}}\right)^2.
\end{equation*}

Note that \eqref{eq:semidisc_sys} does not give an explicit expression for the time derivative $\dot{U}$,
as a solution dependent operator has been applied to it.
For $\D_+ y_i \in \Rper$ and an arbitrary sequence $w = \{w_i\}_{i\in\Z} \in \Rper$, let us define
the discrete momentum operator $\A{\D_+ y} : \Rper \to \Rper$ by
\begin{equation}
\label{eq:defAop}
(\A{\D_+ y}w)_{i} \coloneqq (\D_+ y_i) w_i + \D_- \left(\frac{\D_+ w_{i}}{\D_+ y_{i}}\right).
\end{equation}
Note that when $\D_+ y_i = 1$, \eqref{eq:defAop} is a discrete version of the Sturm--Liouville operator
$\mathrm{Id}-\partial_{xx}$. The name \textit{momentum} operator comes from the fact that the discrete energy can
be written as the scalar product of $\A{\D_+ y} U$ and $U$,
\begin{equation*}
E = \frac12 \sumNxi{i}{ (\A{\D_+ y}U)_i U_i },
\end{equation*}
which corresponds to \eqref{eq:defmomcont}.
Moreover, as in \cite{vardisc} we find that \eqref{eq:semidisc_sys} preserves the total momentum
\begin{equation}\label{eq:disc_mom}
I \coloneqq \sumNxi{i}{(\A{\D_+y}U)_i} = \sumNxi{i}{U_i \D_+y_i},
\end{equation}
where the final identity comes from telescopic cancellations and periodicity.

\subsection{Presentation of the scheme for global in time solutions}
To follow \cite{vardisc} in obtaining a scheme which allows for global in time solutions, we have to invert the
discrete momentum operator \eqref{eq:defAop}, and in the aforementioned paper this is done by finding
a set of summation kernels, or fundamental solutions, $g_{i,j}$, $\gamma_{i,j}$, $k_{i,j}$, and $\kappa_{i,j}$
satisfying
\begin{equation*}
\begin{bmatrix}
(\D_+y_j) & -\D_{j-} \\
-\D_{j+} & (\D_+y_j)
\end{bmatrix}
\begin{bmatrix}
g_{i,j} & \kappa_{i,j} \\
\gamma_{i,j} & k_{i,j}
\end{bmatrix}
= \frac{1}{\dxi}
\begin{bmatrix}
\delta_{i,j} & 0 \\
0 & \delta_{i,j}
\end{bmatrix}, \quad i,j \in \Z,
\end{equation*}
where $\D_{j\pm}$ denotes differences with respect to the index $j$.
Let us for the moment assume that we have a corresponding set of kernels for the periodic case, namely
\begin{equation}\label{eq:fml_sys}
\begin{bmatrix}
(\D_+y_j) & -\D_{j-} \\
-\D_{j+} & (\D_+y_j)
\end{bmatrix}
\begin{bmatrix}
G_{i,j} & \K_{i,j} \\
\varGamma_{i,j} & K_{i,j}
\end{bmatrix}
= \frac{1}{\dxi}
\begin{bmatrix}
\delta_{i,j} & 0 \\
0 & \delta_{i,j}
\end{bmatrix}, \quad i,j \in \{0,\dots,n-1\},
\end{equation}
and which are $n$-periodic in their index $j$ for fixed $i$.
The existence of such kernels will be justified in the next subsection.

In the end we want to derive a system which is equivalent to \eqref{eq:semidisc_sys} for $\D_+y_j > 0$ and which
serves as a finite-dimensional analogue to \cite[Eq.\ (4.42)]{vardisc}.
Following the convention therein we decompose $y_j = \zeta_j + \xi_j$, which by \eqref{eq:periodic_disc} implies that $\zeta$ is $n$-periodic as well: $\zeta_{j+n} = \zeta_j$.
Then, with appropriate modifications of the approach in \cite{vardisc}, our system for $j \in \{0,\dots,{n-1}\}$
reads
\begin{subequations}
  \label{eq:VD_h}	
  \begin{align}
  \dot{\zeta}_j &= U_j, \label{eq:VD_h:z}\\
  \dot{U}_j &= - Q_j, \label{eq:VD_h:U}\\
  \dot{h}_j &= -U_j (\D_-R_j) - R_j (\D_+U_j) = -\D_+(U_i R_{i-1}), \label{eq:VD_h:h}
  \end{align}
\end{subequations}
where we have defined
\begin{align}\label{eq:RQ_per}
\begin{aligned}
R_j &\coloneqq \sumNxi{i}{ \varGamma_{i,j} U_i (\D_+U_i) } + \sumNxi{i}{K_{i,j} h_i }, \\
Q_j &\coloneqq \sumNxi{i}{ G_{i,j} U_i (\D_+U_i) } + \sumNxi{i}{\K_{i,j} h_i },
\end{aligned}
\end{align}
and $h_j$ is defined to satisfy
\begin{equation}\label{eq:h_id}
2 h_j (\D_+y_j) = U_j^2 (\D_+y_j)^2 + (\D_+U_j)^2 + r_j^2.
\end{equation} 
We note that we could have included
\begin{equation*}
\dot{r}_j = 0
\end{equation*}
in \eqref{eq:VD_h}, but since $r$ does not appear in any of the other equations, we choose to omit it.
Note that when considering the CH equation, $\rho$, and thus also $r$, vanishes identically.
In the current setting, this only affects the presence of $r$ in the identity \eqref{eq:h_id}.

Observe that $R_j$ and $Q_j$ in \eqref{eq:RQ_per} are $n$-periodic by virtue of the kernels being $n$-periodic
in $j$, and so it follows that \eqref{eq:VD_h} is of the form $\dot{X}_j(t) = F_j(X(t))$,
where $X_{j+n}(0) = X_j(0)$ and $F_{j+n}(X) = F_j(X)$.
Then the integral form of \eqref{eq:VD_h} shows that $X_{j+n}(t) = X_j(t)$, and so any solution of this equation must
be $n$-periodic.

We also note that we can equivalently formulate \eqref{eq:VD_h} more in the spirit of
\cite[Eq.\ (4.42)]{vardisc} by defining
\begin{equation}\label{eq:H}
H_j(t) = \dxi\sum_{i=0}^{j-1}h_i(t), \enskip j \in \{1,\dots,n\}, \qquad H_0(t) = 0,
\end{equation}
and replace \eqref{eq:VD_h:z} and \eqref{eq:VD_h:h} to obtain
\begin{subequations}
  \label{eq:VD_H}	
  \begin{align}
  \dot{y}_j &= U_j, \label{eq:VD_H:z} \\
  \dot{U}_j &= - Q_j, \label{eq:VD_H:U}\\
  \dot{H}_j &= U_0 R_{n-1} - U_j R_{j-1}, \label{eq:VD_H:H}
  \end{align}
\end{subequations}
where we have combined \eqref{eq:VD_h:h} and \eqref{eq:H} with the periodicity of $U$ and $R$ to get
\eqref{eq:VD_H:H}.
In this case we note that
$\D_+H_j = h_j$ for $j \in \{0,\dots,{n-1}\}$, $\dot{H}_{n}(t) = \dot{H}_0(t) \equiv 0$, and $H_n(t) = H_n(0)$ is
the total energy of the system.
The energy $H_n$ is a reformulation of \eqref{eq:disc_energy} in Lagrangian variables.
Equation \eqref{eq:VD_H} is in fact our preferred version of the scheme, as it more closely resembles
\eqref{eq:cmp_per} and preserves the discrete energy $H_n$ identically.

An important observation is that the sequences defined in \eqref{eq:RQ_per} solve
\begin{equation*}
\begin{bmatrix}
(\D_+y_j) & -\D_- \\
-\D_+ & (\D_+y_j)
\end{bmatrix}
\begin{bmatrix}
Q_j \\ R_j
\end{bmatrix}
=
\begin{bmatrix}
U_j (\D_+U_j) \\ h_j 
\end{bmatrix}, \qquad j \in \{0,\dots,{n-1}\},
\end{equation*}
which is equivalent to
\begin{equation}\label{eq:RQ_mat}
A[\D_+y] \begin{bmatrix}
Q_0 \\ R_0 \\ \vdots \\ Q_{n-1} \\ R_{n-1}		
\end{bmatrix} = \begin{bmatrix}
U_0 (\D_+U_0) \\ h_0 \\ \vdots \\ U_{n-1} (\D_+U_{n-1}) \\ h_{n-1}
\end{bmatrix},
\end{equation}
for the tridiagonal $2n \times 2n$-matrix
\begin{equation}\label{eq:Atot}
A[\D_+y] \coloneqq \frac{1}{\dxi}
\begin{bmatrix}
\dxi \D_+y_0 & -1 & 0 & \cdots & 0 & 0 & 1 \\
1 & \dxi \D_+y_0 & -1 & \cdots & 0 & 0 & 0 \\
\vdots & \vdots & \vdots & \ddots & \vdots & \vdots & \vdots  \\
0 & 0 & 0 & \cdots & 1 & \dxi \D_+y_{n-1} & -1 \\
-1 & 0 & 0 & \cdots & 0 & 1 & \dxi \D_+y_{n-1}
\end{bmatrix}
\end{equation}
with corners.
As shown in the next section, the matrix \eqref{eq:Atot} is invertible whenever $\D_+y_j \ge 0$ for
$j \in \{0,\dots,n-1\}$.
Thus, \eqref{eq:RQ_mat} provides a far more practical approach for computing the right hand side of
\eqref{eq:VD_h} than the identities \eqref{eq:RQ_per}, especially for numerical methods,
as there is no need to compute the kernels in \eqref{eq:fml_sys}.
Indeed, if one uses an explicit method to integrate in time, given $y$, $U$ and $h$ we can solve \eqref{eq:RQ_mat}
to obtain the corresponding $R$ and $Q$.

\subsection{Inversion of the discrete momentum operator}
The alert reader may wonder why we work with the $2n \times 2n$ matrix \eqref{eq:Atot} when inverting the operator
\eqref{eq:defAop} defined in only $n$ points.
This comes from the approach in \cite{vardisc} which enables the discretization to handle ``discrete''
wave breaking, i.e., $\D_+y_i = 0$.
By introducing a change of variables we rewrite the second order difference operator \eqref{eq:defAop} as the first order
matrix operator appearing in \eqref{eq:fml_sys}.
Thus we avoid $\D_+y$ in the denominator at the cost of increasing the size of the system.

When introducing the change of variables, we lose some desirable properties which would have made it easy to
establish the invertibility of the matrix corresponding to \eqref{eq:Atot} in the cases where $\D_+y_i \ge c$ for some
positive constant $c$.
This would for instance be the case for discretizations of \eqref{eq:2CH_IVP} where $\rho_{0}^2(x) \ge d$ for a
constant $d > 0$, since it is then known that wave breaking cannot occur, see
\cite[Thm.\ 4.5]{Grunert2013}.
In particular, we lose symmetry of the matrix which would have enabled us to use the standard argument
involving diagonal dominance,
as used for instance in \cite{LiuPen2016} for a discrete Helmholtz operator.
Our matrix \eqref{eq:Atot} is clearly not diagonally dominant, but it is still invertible,
as shown in the following proposition.

\begin{prop}
  \label{prop:Ainv}
  Assume $y_n - y_0 = \period$ and $\D_+y_i \ge 0$ for $i \in \{0,\dots,n-1\}$.
  Then $A[\D_+y]$ defined in \eqref{eq:Atot} is invertible for any $n \in \N$.
\end{prop}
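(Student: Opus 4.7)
The strategy is to prove that $\ker A[\D_+y] = \{0\}$, which yields invertibility since the matrix is square. Suppose $A[\D_+y] v = 0$ for $v = (Q_0, R_0, \ldots, Q_{n-1}, R_{n-1})^\top$. Reading off \eqref{eq:Atot} row by row and using the periodic identification $R_{-1} = R_{n-1}$, $Q_n = Q_0$ encoded by the corner entries, the homogeneous system reduces to
\begin{align*}
\dxi (\D_+y_j)\, Q_j &= R_j - R_{j-1}, \\
\dxi (\D_+y_j)\, R_j &= Q_{j+1} - Q_j,
\end{align*}
for $j \in \{0,\ldots,n-1\}$.

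The key move is to extract a non-negative quadratic form. Multiplying the first equation by $Q_j$, the second by $R_j$, and summing over $j$, a single index shift (justified by periodicity) gives $\sum_j Q_{j+1} R_j = \sum_j Q_j R_{j-1}$, so every mixed term cancels and one is left with
\begin{equation*}
\dxi \sum_{j=0}^{n-1} (\D_+y_j)(Q_j^2 + R_j^2) = 0.
\end{equation*}
Since $\D_+y_j \ge 0$ by assumption, every summand vanishes, so on the set $S = \{j : \D_+y_j > 0\}$ one immediately concludes $Q_j = R_j = 0$. The hypothesis $y_n - y_0 = L$ enters here through the telescoping identity $\dxi \sum_j \D_+y_j = L > 0$, which guarantees that $S$ is non-empty.

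The delicate part is the complementary set $T = \{j : \D_+y_j = 0\}$, where the energy estimate is silent. For $j \in T$ the null-space equations collapse to $R_j = R_{j-1}$ and $Q_{j+1} = Q_j$. I would then decompose $T$ into its maximal cyclic blocks, each flanked on both sides (in the circular order of indices modulo $n$) by elements of $S$; across such a block the collapsed relations force $R$ to be constant and equal to its value at the $S$-index immediately preceding the block, while $Q$ is constant and equal to its value at the $S$-index immediately following the block. Both boundary values are already zero by the previous step, so $Q \equiv 0$ and $R \equiv 0$, and thus $\ker A[\D_+y] = \{0\}$.

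The only genuine obstacle is the collision case $\D_+y_j = 0$: the quadratic form by itself cannot pin down $Q_j$ or $R_j$ at such indices, and one has to revisit the original equations to propagate zeros from the neighboring positive-slope indices. This is precisely where the periodicity constraint $y_n - y_0 = L > 0$ is essential, since it is what prevents $S$ from being empty and anchors the propagation.
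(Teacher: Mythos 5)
Your proof is correct, but it takes a genuinely different route from the paper's. You show the kernel is trivial: the off-diagonal part of $\dxi A[\D_+y]$ is skew-symmetric (superdiagonal $-1$'s against subdiagonal $+1$'s, and opposite-signed corners), so the mixed terms in $v^\top (\dxi A[\D_+y])v$ cancel under the periodic index shift and one is left with the nonnegative form $\dxi\sum_j \D_+y_j\,(Q_j^2+R_j^2)$; this forces $Q_j=R_j=0$ wherever $\D_+y_j>0$, and you then propagate the zeros across each maximal degenerate block via $R_j=R_{j-1}$ and $Q_{j+1}=Q_j$, with $y_n-y_0=\period>0$ guaranteeing that the set of non-degenerate indices is nonempty so the propagation has an anchor. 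Your handling of the delicate case $\D_+y_j=0$ is complete: both chains terminate at an index already known to carry a zero. The paper instead bounds the determinant from below: it invokes the identity $\det(\dxi A[\D_+y])=\tr(\varPi_0)-2$ for corner-augmented tridiagonal matrices, with $\varPi_0=A_{n-1}\cdots A_0$ the product of the $2\times 2$ transfer matrices \eqref{eq:Aj}, expands $\varPi_0=\prod_j(I+E_j)$ with entrywise nonnegative $E_j$, and retains enough terms to get $\tr(\varPi_0)-2\ge\bigl(\dxi\sum_j\D_+y_j\bigr)^2=\period^2$. Your argument is more elementary and self-contained, needing no external determinant identity, and it isolates the structural reason the collision indices are harmless; what it does not give is any quantitative information. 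The paper's computation buys the explicit bound $\det(A[\D_+y])\ge n^{2n}/\period^{2n-2}$ and, more importantly, introduces the monodromy matrix $\varPi_0$ that is immediately reused in the Floquet construction of the fundamental solutions in Proposition \ref{prop:floquet}, so the extra work there is not wasted.
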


\begin{proof}
  We will prove that the determinant of $A[\D_+y]$ is bounded from below by a strictly positive constant.
  Thus it is never singular.
  
  First we recall the matrix
  \begin{equation}\label{eq:Aj}
  A_j = \begin{bmatrix}
  1 + (\dxi \D_+y_j)^2 & \dxi\D_+y_j \\ \dxi\D_+y_j & 1
  \end{bmatrix}
  \end{equation}
  which played an essential part when inverting the discrete momentum operator on the full line in \cite{vardisc}.
  Below we will see that it plays a role in the periodic case as well, and we emphasize the property
  $\det{A_j} = 1$.
  
  Turning back to $A[\D_+y]$, we consider the rescaled matrix $\dxi A[\D_+y]$ in order to have the absolute
  values of the off-diagonal elements equal to one.
  We observe that this matrix is tridiagonal, with nonzero corners owing to the periodic boundary.
  Then, the clever argument in \cite[Lem.~ 1]{Molinari2008} gives an identity for the determinant of a general
  matrix of this form, which in our case reads
  \begin{equation}\label{eq:det_tr_id}
  \det(\dxi A[\D_+y]) = -\det\left( \varPi_0 - I \right) = \tr(\varPi_0)-2,
  \end{equation}
  with
  \begin{equation}\label{eq:Aprod0}
  \varPi_0 \coloneqq A_{n-1} A_{n-2} \cdots A_1 A_0,
  \end{equation}
  and where the last identity in \eqref{eq:det_tr_id} comes from $\det(\varPi_0) = 1$.
  Next, we note that each factor $A_j$ in $\varPi_0$ can be written as
  \begin{equation*}
  A_j = I + E_j, \qquad E_j = \dxi \D_+y_j \begin{bmatrix}
  \dxi \D_+y_j & 1 \\ 1 & 0
  \end{bmatrix}, 
  \end{equation*}
  for which we have
  \begin{equation*}
  E_j E_k = (\dxi \D_+y_j)(\dxi \D_+y_k) \begin{bmatrix}
  1 + (\dxi \D_+y_j)(\dxi \D_+y_k) & \dxi \D_+y_j \\ \dxi \D_+y_k & 1
  \end{bmatrix}
  \end{equation*}
  Then we may expand $\varPi_0$ as
  \begin{equation*}
  \varPi_0 = (I + E_{n-1})\cdots (I + E_0) = I + \sum_{j=0}^{n-1}E_j + \sum_{j=0}^{n-2}\sum_{k=j+1}^{n-1}E_j E_k + \cdots,
  \end{equation*}
  which means that its trace can be expanded as
  \begin{align*}
  \tr(\varPi_0) &= 2 + \sum_{j=0}^{n-1}(\dxi \D_+y_j)^2 \\
  &\quad+ \sum_{j=0}^{n-2}\sum_{k=j+1}^{n-1} (\dxi \D_+y_j)(\dxi \D_+y_k)[2 + (\dxi \D_+y_j)(\dxi \D_+y_k)] + \cdots.
  \end{align*}
  Since all factors are nonnegative, we throw away most terms to obtain
  \begin{align*}
  \tr(\varPi_0) - 2 &\ge \sum_{j=0}^{n-1}(\dxi \D_+y_j)^2 + 2\sum_{j=0}^{n-2}\sum_{k=j+1}^{n-1} (\dxi \D_+y_j)(\dxi \D_+y_k) \\
  &= \left( \sumNxi{j}{\D_+y_j} \right)^2 = \period^2,
  \end{align*}
  where the final identity follows from $y_n - y_0 = \period$.
  Hence, combining the above with $n \dxi = \period$ we obtain the lower bound
  \begin{equation*}
  \det(A[\D_+y]) = \frac{\tr(\varPi_0)-2}{\dxi^{2n}} \ge \frac{\period^2}{\dxi^{2n}} = \frac{n^{2n}}{\period^{2n-2} },
  \end{equation*}
  which clearly shows $A[\D_+y]$ to be nonsingular for any $n \in \N$.
\end{proof}

To prove the existence of global solutions to the governing equations \eqref{eq:VD_h} or \eqref{eq:VD_H} by a fixed point
argument, we have to establish Lipschitz continuity of the right-hand side.
However, Lipschitz bounds for the inverse operator of $\A{\D_+y}$ are difficult to obtain
directly. In particular, we see that the estimates in Proposition \ref{prop:Ainv} rely on the positivity of the sequence $\D_+y$,
which is difficult to impose in a fixed-point argument. Therefore, we will have to follow the approach developed in
\cite{vardisc} where we introduce the fundamental solutions for the operator $\A{\D_+y}$ and propagate those in time
together with the solution.
Proposition \ref{prop:Ainv} gives us the existence of the fundamental solutions in \eqref{eq:fml_sys}.
Indeed, comparing the equations \eqref{eq:fml_sys} with the matrix \eqref{eq:Atot} one can verify that each of
$G_{i,j}$, $\varGamma_{i,j}$, $K_{i,j}$, and $\K_{i,j}$ for $i,j \in \{0,\dots,n-1\}$, $4n^2$ in total,
appears as a distinct entry in the inverse of $\dxi A[\D_+y]$.

We do not detail here the argument developed in \cite{vardisc} which shows the existence of global solutions to the semi-discrete
system \eqref{eq:VD_h}. In fact, the periodic case is of finite dimension and therefore easier to treat than the case of the
real line.
Instead we will devote most of the remaining paper to numerical results.
Before that, we present nevertheless some interesting properties of the
fundamental solutions that can be derived in the periodic case, and which show the connection
to the fundamental solutions on the real line.
Readers more interested in numerical results may skip to Section \ref{s:experiments}.

\subsubsection{Properties of the fundamental solutions}
Here we present an alternative method for deriving the periodic fundamental solutions, more in line with the procedure in
\cite{vardisc}.
The construction is done in two steps, the first of which is to find the fundamental solutions on the infinite
grid $\dxi \Z$ as was done in \cite{vardisc}.  Then it turns out that we can periodize these solutions to find
fundamental solutions for the grid given by $i \dxi$ for $0 \le i < n$.
In this endeavor we only assume the periodicity $y_{i+n} = y_i + \period$ and $\D_+y_i \ge 0$, as was done in
Proposition \ref{prop:Ainv}.

Due to the periodicity, we can think of the sequences $\{\D_+y_j, U_j, h_j, r_j\}_{j \in \Z}$
being a repetition of $\{\D_+y_j, U_j, h_j, r_j\}_{j = 0}^{n-1}$,
such that $\D_+y_{j+k n} = \D_+y_j$ for $j,k \in \Z$, and similarly for the other entries.
Furthermore, it enforces the relation
\begin{equation*}
\sumNxi{i}{ \D_+y_i } = y_n - y_0 = \period,
\end{equation*}
which together with $n \dxi = \period$ yields
\begin{equation}\label{eq:lbd_Dy}
\frac{\period}{n}\sum_{i=0}^{n-1}\D_+y_i = \period \iff \frac{1}{n}\sum_{i=0}^{n-1} \D_+y_i = 1 \implies \max_{0 \le i \le n-1} \D_+y_i \ge 1.
\end{equation}
This also leads to the upper bounds
\begin{equation}\label{eq:ubd_Dy}
\max_{0 \le i \le n-1} \D_+y_i \le n \iff \max_{0 \le i \le n-1} \dxi \D_+y_i \le \period,
\end{equation}
but note that this bound can only be attained if $\D_+y_i = 0$ for every other index than the one achieving the maximum.

To find a fundamental solution $g_{i,j}$ for the operator \eqref{eq:defAop} defined on the real line,
that is $g_{i,j}$ which satisfies
\begin{equation*}
(\D_+y_j) g_{i,j} - \D_{j-}\left(\frac{\D_{j+}g_{i,j}}{\D_+y_j}\right) = \frac{\delta_{i,j}}{\dxi},
\end{equation*}
we consider the homogeneous operator equation
\begin{equation}\label{eq:hom_g}
(\D_+y_i) g_i - \D_-\left(\frac{\D_+g_i}{\D_+y_i}\right) = 0, \quad i \in \Z.
\end{equation}
By introducing the quantity
\begin{equation*}
\gamma_i = \frac{\D_+g_i}{\D_+y_i}
\end{equation*}
we can restate \eqref{eq:hom_g} as
\begin{equation*}
\begin{bmatrix}
g_{i+1} \\ \gamma_i
\end{bmatrix} = \begin{bmatrix}
1 + (\dxi \D_+y_i)^2 & \dxi \D_+y_i \\ \dxi \D_+y_i & 1
\end{bmatrix} \begin{bmatrix}
g_i \\ \gamma_{i-1}
\end{bmatrix} = A_i \begin{bmatrix}
g_i \\ \gamma_{i-1}
\end{bmatrix}, \quad i \in \Z.
\end{equation*}
Thus, if for any index $i$ we prescribe values for $g_i$ and $\gamma_{i-1}$, the corresponding solution of \eqref{eq:hom_g} in any other index can be found by repeated multiplication with the matrix $A_i$ from \eqref{eq:Aj} and its inverse.
The eigenvalues and eigenvectors of $A_i$ are found in \cite[Lem.~ 3.3]{vardisc}, and we briefly state its eigenvalues
\begin{equation*}
\lambda^\pm_i = 1 + \frac{(\dxi \D_+y_i)^2}{2} \pm \frac{\dxi \D_+y_i}{2}\sqrt{4 + (\dxi \D_+y_i)^2},
\end{equation*}
and underline that $\lambda_i^+ \lambda_i^- = 1$.
By \eqref{eq:ubd_Dy} we obtain the bound
\begin{equation*}
1 + \dxi \D_+y_i \le \lambda_i^+ \le 1 + \dxi (\D_+y_i) \left(\sqrt{1+\left(\frac{\period}{2}\right)^2} + \frac{\period}{2} \right).
\end{equation*}
Thus, using the inequality
\begin{equation*}
\frac{x}{1+\frac12 x} < \ln(1+x) < x, \quad x > 0
\end{equation*}
we find
\begin{equation*}
\frac{\dxi \D_+y_i}{1+\frac{\period}{2}} \le \ln{\lambda_i^+} \le \left(\sqrt{1+\left(\frac{\period}{2}\right)^2} + \frac{\period}{2} \right) \dxi \D_+y_i,
\end{equation*}
which means that we may write
\begin{equation}\label{eq:eigAnexp}
\lambda_i^\pm = e^{\pm c_i \dxi \D_+y_i}
\end{equation}
for some
\begin{equation*}
\frac{1}{1+\frac{\period}{2}} \le c_i \le \sqrt{1+\left(\frac{\period}{2}\right)^2} + \frac{\period}{2}.
\end{equation*}

To construct fundamental solutions for the operator we need to find the correct homogeneous solutions for our purpose,
namely those with exponential decay.
In \cite{vardisc} one used the asymptotic relation $\lim_{n\to\pm\infty} \D_+y_i = 1$ to deduce
the existence of limit matrices, and the correct values to prescribe for $g$ and $\gamma$ were given by the
eigenvectors of these matrices.
The periodicity of $\D_+y$ prevents us from applying the same procedure to the problem at hand, but fortunately it
turns out that a different argument can be applied in our case. In fact, we can draw much inspiration from
\cite[Chap.~ 7]{JacobiOperator} which treats Jacobi operators with periodic coefficients, since the operator
\eqref{eq:defAop} can be regarded as a particular case of such operators.
However, we make some modifications in this argument for our setting, such as introducing the variable
$\gamma_i$ from earlier, and using $[g_i, \gamma_{i-1}]^\top$ as the vector to be propagated instead of $[g_i, g_{i-1}]^\top$.
The reason for this is to ensure the nice properties of the transition matrix $A_i$, such as symmetry and
determinant equal to one, and to avoid problems with dividing by zero when $\D_+y_i = 0$.
See also the discussion leading up to \cite[Lem.~ 3.3]{vardisc}.

\begin{prop}\label{prop:floquet}
  The solutions of the homogeneous operator equation are of the form
  \begin{equation*}
  g^{\pm}_i = p_i e^{\pm i \dxi q}, \quad p_{i+n} = p_i, \quad q > 0,
  \end{equation*}
  which corresponds to the Floquet solutions in \cite[Thm.~ 7.3]{JacobiOperator}.
\end{prop}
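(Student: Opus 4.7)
The plan is to apply discrete Floquet theory via the transfer-matrix framework laid out just before the statement. Any solution of \eqref{eq:hom_g} is determined by the initial vector $[g_0,\gamma_{-1}]^\tp$ and evolves according to $[g_{i+1},\gamma_i]^\tp = A_i [g_i,\gamma_{i-1}]^\tp$ with $A_i$ as in \eqref{eq:Aj}; propagation over one period is governed by the monodromy matrix $\varPi_0 = A_{n-1}\cdots A_0$ of \eqref{eq:Aprod0}. A Floquet solution with multiplier $\mu$ is precisely one whose initial data is an eigenvector of $\varPi_0$ for $\mu$, so the whole proposition reduces to a spectral analysis of $\varPi_0$.

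First I would analyse the spectrum of $\varPi_0$. Since $\det A_j = 1$ for every $j$, we have $\det \varPi_0 = 1$, and the two eigenvalues satisfy $\mu_+\mu_- = 1$. From the expansion of $\tr(\varPi_0)$ already carried out in the proof of Proposition \ref{prop:Ainv} we have the lower bound $\tr(\varPi_0) \ge 2 + \period^2 > 2$. For a real $2\times 2$ matrix of determinant one, the eigenvalues are complex conjugates of unit modulus whenever the trace lies in $[-2,2]$, so this estimate forces $\mu_\pm$ to be real and, together with $\mu_+\mu_- = 1 > 0$ and $\mu_++\mu_- > 2$, to be positive and distinct. We may therefore write $\mu_\pm = e^{\pm \period q}$ for a uniquely determined $q > 0$. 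This spectral dichotomy will be the one delicate step; ruling out the elliptic case where $\mu_\pm$ sit on the unit circle is handled entirely by the quantitative trace bound inherited from Proposition \ref{prop:Ainv}, so no new estimates are needed.

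Finally, I would construct the Floquet solutions explicitly. Let $v_\pm$ be eigenvectors of $\varPi_0$ associated to $\mu_\pm$, and let $g^\pm_i$ be the solutions of \eqref{eq:hom_g} with initial data $[g_0^\pm,\gamma_{-1}^\pm]^\tp = v_\pm$. Because the sequence $\{A_j\}$ is $n$-periodic, a direct rearrangement gives
\begin{equation*}
\begin{bmatrix} g_{i+n}^\pm \\ \gamma_{i+n-1}^\pm \end{bmatrix} = A_{i+n-1}\cdots A_n \varPi_0 v_\pm = \mu_\pm A_{i-1}\cdots A_0 v_\pm = \mu_\pm \begin{bmatrix} g_i^\pm \\ \gamma_{i-1}^\pm \end{bmatrix}.
\end{equation*}
In particular $g_{i+n}^\pm = e^{\pm \period q} g_i^\pm$, and using $n\dxi = \period$ we see that $p_i^\pm \coloneqq g_i^\pm e^{\mp i\dxi q}$ is $n$-periodic, giving the stated Floquet form.
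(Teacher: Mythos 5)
Your proposal is correct and follows the same overall architecture as the paper's proof: reduce \eqref{eq:hom_g} to the transfer-matrix recursion with the matrices $A_i$ of \eqref{eq:Aj}, show that the monodromy matrix $\varPi_0$ of \eqref{eq:Aprod0} is hyperbolic with eigenvalues $e^{\pm\period q}$, $q>0$, and then propagate its eigenvectors to obtain the two Floquet solutions. The eigenvector-propagation step at the end is essentially identical to the paper's computation of $[g^\pm_{i+n},\gamma^\pm_{i+n-1}]^\tp = \varpi^\pm[g^\pm_i,\gamma^\pm_{i-1}]^\tp$, and your relation holds for all $i\in\Z$, so it also covers the decay as $i\to-\infty$ that the paper checks separately.

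Where you genuinely diverge is in how hyperbolicity of the monodromy matrix is established. The paper proves the entrywise inequality $\varPi_{i_0}\ge A>0$, where $A$ is the matrix \eqref{eq:Aj} with $\D_+y_j=1$ (using \eqref{eq:lbd_Dy} to guarantee at least one factor dominates $A$), and then invokes Wielandt's theorem to conclude $\varpi^+\ge\lambda^+>1$. You instead recycle the trace expansion from the proof of Proposition \ref{prop:Ainv}, which gives $\tr(\varPi_0)\ge 2+\period^2$, and combine it with $\det(\varPi_0)=1$ and the elementary dichotomy for real $2\times2$ unimodular matrices: trace strictly greater than $2$ forces two distinct positive real eigenvalues with product one. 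This is entirely self-contained (no Perron--Frobenius-type machinery) and in fact yields the sharper, $\dxi$-independent bound $e^{\period q}\ge 1+\tfrac{\period^2}{2}+\tfrac{\period}{2}\sqrt{\period^2+4}$, whereas the paper's comparison matrix $A$ only gives $\varpi^+\ge 1+\tfrac{\dxi^2}{2}+\tfrac{\dxi}{2}\sqrt{4+\dxi^2}$, which degenerates to $1$ as $\dxi\to0$; your route thus directly supports the paper's later remark that $q$ can be bounded below by a constant depending only on $\period$. What you omit, and the paper includes, is the full discrete Floquet factorization $\varPhi_{i,i_0}=P_{i,i_0}\exp((i-i_0)\dxi Q)$ with $P_{i,i_0}$ periodic in $i$; this is not needed for the stated form of $g^\pm_i$, so its absence is not a gap, but it is the structural statement the paper uses to connect with \cite[Thm.~7.3]{JacobiOperator}. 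One last small point: you fix the base index $i_0=0$, while the paper keeps $i_0$ general and notes all $\varPi_i$ are similar; since similar matrices share their spectrum, nothing is lost.
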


\begin{proof}
  Let us follow \cite[Eq.\ (3.23)]{vardisc} in defining the transition matrix
  \begin{equation*}
  \varPhi_{j,i} \coloneqq \begin{cases} \ds
  A_{j-1} A_{j-2} \dots A_{i+1} A_{i}, & j > i, \\
  \ds I, & j = i, \\
  \ds (A_{j})^{-1} (A_{j-1})^{-1} \dots (A_{i-2})^{-1} (A_{i-1})^{-1}, & j < i
  \end{cases}
  \end{equation*}
  which satisfies
  \begin{equation*}
  \begin{bmatrix}
  g_j \\ \gamma_{j-1}
  \end{bmatrix} = \varPhi_{j,i} \begin{bmatrix}
  g_i \\ \gamma_{i-1}
  \end{bmatrix}, \quad (\varPhi_{j,i})^{-1} = \varPhi_{i,j}, \quad \varPhi_{j,i} = \varPhi_{j,k} \varPhi_{k,i}, \quad i,j,k \in \Z.
  \end{equation*}
  By the $n$-periodicity of $A_i$ we find $\varPhi_{j+n,i+n} = \varPhi_{j,i}$, and since $\det{A_i} = 1$ it follows that $\det{\varPhi_{j,i}} = 1$.
  
  The next step is to show that for any fixed $i_0$, we can write $\varPhi_{i,i_0}$ as the product of a matrix with $n$-periodic coefficients and a matrix exponential, as in \cite[p.\ 116]{JacobiOperator}.
  We define the generalization of \eqref{eq:Aprod0}
  \begin{equation}\label{eq:Aprod}
  \varPi_{i} \coloneqq A_{i+n-1} A_{i+n-2} \dots A_{i+1} A_i = \varPhi_{i+n,i},
  \end{equation}
  which is clearly $n$-periodic and contains every possible instance of $A_i$ as a factor.
  Note that for any $i, j \in \Z$, it follows from the properties of $\varPhi_{j,i}$ that we can write
  \begin{equation*}
  \varPi_j = \varPhi_{j+n,i+n} \varPi_i \varPhi_{i,j} = \varPhi_{j,i} \varPi_i (\varPhi_{j,i})^{-1},
  \end{equation*}
  so $\varPi_j$ and $\varPi_i$ are similar matrices.
  Thus, among other properties, they have the same eigenvalues.
  
  The matrix $\varPi_i$ in \eqref{eq:Aprod} will prove to be the key to the construction of fundamental solutions.
  Now we define $A$ to be the matrix given by \eqref{eq:Aj} for $\D_+y_j = 1$, that is
  \begin{equation*}
  A =
  \begin{bmatrix}
  1 + \dxi^2 & \dxi\\
  \dxi& 1
  \end{bmatrix}.
  \end{equation*}
  Since $A_i$ is the sum of the identity and a nonnegative matrix, we deduce
  the entrywise inequality $\varPi_{i_0} \ge A > 0$.
  Indeed, this follows from using \eqref{eq:lbd_Dy} to bound $A_i$ corresponding to the maximal $\D_+y_i$ from
  below by $A$, while bounding the remaining $A_i$ from below by $I$. 
  Then it follows from Wielandt's theorem \cite[p.\ 675]{MatrixApplied} that $\varpi^+ \ge \lambda^+ > 1$,
  where $\varpi^+$ and $\lambda^+$ are the largest eigenvalues of $\varPi_{i_0}$ and $A$ respectively.  Moreover, as
  $\det(\varPi_{i_0}) = 1$ this implies $1 > \lambda^- \ge \varpi^-$, where the superscript now indicates the smallest eigenvalue.
  In particular, this implies the existence of a matrix $Q$ with eigenvalues $\pm q$ for some $q > 0$ such that
  $\varPi_{i_0} = \exp(n \dxi Q)$.  This can be seen as an alternative factorization of $\varPi_{i_0}$,
  a sort of geometric mean raised to the power $n$.
  Note that due to \eqref{eq:eigAnexp} we find it natural to include $\dxi$ in the exponent to ensure that $q$ can
  be bounded from above and below by constants depending only on the period $\period$ instead of the
  grid parameter $\dxi$.
  
  The matrix exponential is always invertible and so we may write
  \begin{equation*}
  \varPhi_{i,i_0} = P_{i,i_0} \exp((i-i_0) \dxi Q)
  \end{equation*}
  for some matrix $P_{i,i_0}$ which necessarily satisfies $P_{i_0,i_0} = I$.
  We then verify that $P_{i,i_0}$ is $n$-periodic in $i$,
  \begin{align*}
  P_{i+n,i_0} &= \varPhi_{i+n,i_0} \exp(-(i+n-i_0) \dxi Q) \\
  &= \varPhi_{i+n,i_0} (\Pi_{i_0})^{-1} \exp(-(i-i_0) \dxi Q) \\
  &= \varPhi_{i+n,i_0} \varPhi_{i_0,i_0+n} \exp(-(i-i_0) \dxi Q) \\
  &= \varPhi_{i+n,i_0+n} \exp(-(i-i_0) \dxi Q) \\
  &= \varPhi_{i,i_0} \exp(-(i-i_0) \dxi Q) \\
  &= P_{i,i_0} \exp((i-i_0)Q) \exp(-(i-i_0) \dxi Q) \\
  &= P_{i,i_0}.
  \end{align*}
  From before we know that $\varPi_{i_0}$ has distinct eigenvalues $\varpi^{\pm} = e^{\pm n\dxi q} = e^{\pm \period q}$.  Let us
  then denote the corresponding eigenvectors of $\varPi_{i_0}$ by $v^\pm_{i_0}$, and define
  \begin{equation*}
  \begin{bmatrix}
  g^{\pm}_{i} \\ \gamma^{\pm}_{i-1}
  \end{bmatrix} = \varPhi_{i,i_0} v^\pm_{i_0},
  \end{equation*}
  where $g^\pm_i$ corresponds to the Floquet solution of \eqref{eq:hom_g}.
  Observe that
  \begin{equation*}
  \begin{bmatrix}
  g^{\pm}_{i+n} \\ \gamma^{\pm}_{i-1+n}
  \end{bmatrix} = \varPhi_{i+n,i_0} v^\pm_{i_0} = \varPhi_{i+n,i_0+n} \varPi_{i_0} v^\pm_{i_0} = \varpi^\pm \varPhi_{i,i_0} v^\pm_{i_0} = \varpi^\pm \begin{bmatrix}
  g^{\pm}_{i} \\ \gamma^{\pm}_{i-1}
  \end{bmatrix}.
  \end{equation*}
  Since we can find such eigenvectors for any $0 \le i_0 \le n-1$ it is clear that the homogeneous solutions can be
  written as
  \begin{equation}\label{eq:hom_exp}
  g^\pm_{i} = \hat{p}_i e^{\pm i \dxi q}, \quad \gamma^\pm_{i} = \tilde{p}_i e^{\pm i \dxi q}
  \end{equation}
  where $\hat{p}_{i+n} = \hat{p}_i$ and $\tilde{p}_{i+n} = \tilde{p}_i$.
  Note that we may use the same Floquet solutions for $i \to -\infty$ as well.
  Indeed,
  \begin{equation*}
  \begin{bmatrix}
  g^{\pm}_{i-n} \\ \gamma^{\pm}_{i-1-n}
  \end{bmatrix} = \varPhi_{i-n,i_0} v^\pm_{i_0} = \varPhi_{i-n,i_0-n} (\varPi_{i_0})^{-1} v^\pm_{i_0} = \varpi^\mp \varPhi_{i,i_0} v^\pm_{i_0} = \varpi^\mp \begin{bmatrix}
  g^{\pm}_{i} \\ \gamma^{\pm}_{i-1}
  \end{bmatrix}.
  \end{equation*}
  This concludes the proof.
\end{proof}

Hence, to obtain a fundamental solution centered at some $0 \le i \le n-1$,
we need only combine the Floquet solutions \eqref{eq:hom_exp} with decay in each direction in such a way as to satisfy the correct jump condition at $i$.
As shown in \cite{vardisc}, the solution is of the form
\begin{equation}\label{eq:g}
g_{i,j} = \frac{1}{W}\begin{cases}
g^-_j g^+_i, & j \ge i \\
g^+_j g^-_i, & j < i
\end{cases}, \quad \gamma_{i,j} = \frac{1}{W} \begin{cases}
\gamma^-_j g^+_i, & j \ge i \\
\gamma^+_j g^-_i, & j < i
\end{cases}
\end{equation}
where $W = W_j = g^-_j \gamma^+_j - g^+_j \gamma^-_j$ is the spatially constant Wronskian.
Furthermore, the use of $g_{i,j}$ and $\gamma_{i,j}$ in \cite{vardisc} to construct fundamental solutions
$k_{i,j}$ and $\kappa_{i,j}$ satisfying
\begin{equation*}
\kappa_{i,j} = \frac{\D_{j-}k_{i,j}}{\D_+y_j}, \quad (\D_+y_j) k_{i,j} - \D_+\left(\frac{\D_-k_{i,j}}{\D_+y_j}\right) = \frac{\delta_{i,j}}{\dxi}
\end{equation*}
carries over directly.

Using the fundamental solutions found before we introduce the periodized kernels
\begin{align}\label{eq:GK}
\begin{aligned}
G_{i,j} &\coloneqq \sumZ{m}{g_{i,j+m n}}, & \varGamma_{i,j} &\coloneqq \sumZ{m}{\gamma_{i,j+m n}}, \\
K_{i,j} &\coloneqq \sumZ{m}{k_{i,j+m n}}, & \K_{i,j} &\coloneqq \sumZ{m}{\kappa_{i,j+m n}}
\end{aligned}
\end{align}
which are defined for $i,j \in \{0,\dots,n-1\}$ and $n$-periodic in $j$, e.g., $G_{i,j+n} = G_{i,j}$.
By our previous analysis, the summands are exponentially decreasing in $|m|$, and so the series in \eqref{eq:GK} are well-defined.
For instance, using \eqref{eq:hom_exp} and \eqref{eq:g} we may compute $G_{i,j}$ as the sum of two geometric series,
\begin{equation*}
G_{i,j} = \frac{\hat{p}_i \hat{p}_j}{W} \frac{e^{-q\dxi|j-i|} + \varpi^- e^{q\dxi|j-i|}}{1-\varpi^-} = \frac{\hat{p}_i \hat{p}_j}{W} \frac{e^{-q\Delta\xi|j-i|} + e^{q\Delta\xi(|j-i|-n)}}{1-e^{-q\dxi n}}.
\end{equation*}
Compare this expression for $i=0$ to the definition of $g^p_j$ in \cite[p.\ 1658]{Holden2006},
and note that they coincide for $\D_+y_j \equiv 1$ with our $q\dxi$ and $\hat{p}_0 \hat{p}_j/W$ corresponding to their $\kappa$ and $c$ respectively.
Using the fact that $\D_+y_{j+m n} = \D_+y_j$ for $m \in \Z$ we observe that these functions satisfy
the fundamental solution identity \eqref{eq:fml_sys}.
Moreover, the identity \eqref{eq:fml_sys} imposes two symmetry conditions and an anti-symmetry condition on \eqref{eq:GK},
namely
\begin{equation}\label{eq:sym_GK}
G_{i,j} = G_{j,i}, \quad K_{i,j} = K_{j,i}, \quad \K_{i,j} = -\varGamma_{j,i}.
\end{equation}
These can be derived in complete analogy to the proof of \cite[Lem.~ 4.1]{vardisc}, replacing the decay at infinity by periodicity to carry out the summation by parts without any boundary terms.
Alternatively, one can use the structure of the matrix \eqref{eq:Atot} and its inverse to
show that \eqref{eq:sym_GK} holds.


\section{Numerical experiments}\label{s:experiments}

In this section we will test our numerical method presented in the previous section for both the CH equation
\eqref{eq:CH} and the 2CH system \eqref{eq:2CH}, and compare it to existing methods.
As these are only discretized in space, we just want to consider the error introduced by the spatial discretization.
To this end we have chosen to use explicit solvers from the \textsc{Matlab} ODE suite to integrate in time,
and in most cases this amounts to using \texttt{ode45}, the so-called go-to routine.
\textsc{Matlab}'s solvers estimate absolute and relative errors, and the user may set corresponding tolerances
for these errors, \texttt{AbsTol} and \texttt{RelTol}, to control the accuracy of the solution.
Our aim is to make the errors introduced by the temporal integration negligible compared to the errors stemming
from the spatial discretization, and thus be able to compare the spatial discretization error of our schemes to
those of existing methods.
All experiments were performed using \textsc{Matlab} R2018b on a 2015 Macbook Pro with a 3.1 GHz Dual-Core Intel Core i7 processor.

For the examples where we have an exact reference solution, we would like to compare convergence rates for some
fixed time $t$.
To compute the error we have then approximated the $\Hone$-norm by a Riemann sum
\begin{equation}\label{eq:H1_approx}
\normHone{u_n - u}^2 \approx \dx \sum_{i=0}^{2^{k_0}-1} \left[ (u_n(x_i) - u(x_i))^2 + ((u_n)_x(x_i) - u_x(x_i))^2 \right],
\end{equation}
where $u(x)$ is the reference solution, and $u_n(x)$ is the numerical solution for $n = 2^k$.
The norm in \eqref{eq:H1_approx} is interpolated on a reference grid $x_i = i \dx$ for $\dx = 2^{-k_0}\period$.
Here we ensure that $k_0$ is large enough compared to $k$ for the approximation to be sufficiently close
to the $\Hone$-norm, and in general we have found that taking $k_0 \ge 2 + \max_k k $ works well in our examples.
We omit the second term of the summand in \eqref{eq:H1_approx} to obtain the corresponding approximation of the
$\Ltwo$-norm.

Note that for the schemes \eqref{eq:cmp_per} and \eqref{eq:VD_H} set in Lagrangian coordinates, traveling waves
in an initial interval will move away from this interval along their characteristics.
To compare their solutions to schemes set in fixed Eulerian coordinates we consider only norm on the initial interval,
and for the Lagrangian solution we use the periodicity to identify $y(t,\xi)$ with a position on the initial
interval.
For instance, if the initial interval is $[0,\period]$, we identify the solution in the positions $y$ and
$y$ modulo $\period$.

\subsection{Review of the discretization methods}\label{ss:review}
Here we briefly review the discretization methods used in the coming examples, and in particular we specify how
the they have been interpolated on the reference grid.
The schemes we use to compare with \eqref{eq:VD_H} can of course be just a small sample of existing methods,
and we have chosen to compare with a subset of schemes which share some features with our variational scheme \eqref{eq:VD_H}.
As alluded to in the introduction, the conservative multipeakon scheme \eqref{eq:cmp_line} from \cite{holden2007globalmp} shares much structure
with \eqref{eq:VD_H}, and so we found it natural to define its periodic version \eqref{eq:cmp_per} for
comparison.
Furthermore, since the discrete energy \eqref{eq:disc_energy} is defined using finite differences, we decided to
implement some finite difference schemes, and here we included both conservative and
dissipative methods to illustrate their features.
Finally, we included a pseudospectral scheme, also known as Fourier collocation method, which has less in common with the other
schemes.
This is known to perform extremely well for smooth solutions, but we will see that it is less suited for solutions of
peakon type.

We underline that even though these numerical schemes may have been presented with specific methods for
integrating in time in their respective papers, for these examples we want to compare the error introduced by the
spatial discretization only, and to treat all methods equally we choose a common explicit method as described before.

\subsubsection{Conservative multipeakon scheme}
As mentioned in the introduction, when defining the interpolant $u_n(t,x)$ for the multipeakon scheme
\eqref{eq:cmp_per} between the peaks located at $y_i$, it is a piecewise combination of exponential functions.
This in turn makes its derivative $(u_n)_x$ piecewise smooth, but discontinuous at the peaks.
For the approximation of initial data, unless otherwise specified, we have chosen $y_i(0) = \xi_i$,
$U_i(0) = u_0(\xi_i)$, and computed $H_i(0)$ according to \eqref{eq:cmp_H} for $\xi_i = i \dxi$ and
$i \in \{0,\dots,n-1\}$.

\subsubsection{Variational finite difference Lagrangian scheme}
We mentioned in Section \ref{s:VD} that it is computationally advantageous to solve the matrix system
\eqref{eq:RQ_mat} when computing $R$ and $Q$ in the right-hand side of \eqref{eq:VD_H}.
Indeed, solving this nearly tridiagonal system should have a complexity close to $\mathcal{O}(n)$
when solved efficiently.
In practice, we find that the standard \textsc{Matlab} backslash operator, or \texttt{mldivide}
routine, is sufficient for our purposes, as it seems to scale approximately linearly with $n$ in our experiments. 

For the interpolant $u_n(t,x)$ we solve \eqref{eq:VD_H} for a given $n$ to find $y_i(t)$ and $U_i(t)$, and define
a piecewise linear interpolation.
This makes $(u_n)_x$ piecewise constant with value $\D_+U_i / \D_+y_i$ for $x \in [y_i, y_{i+1})$.
Note that there is no trouble with dividing by zero as the corresponding intervals are empty.
When applying the scheme to the 2CH system, we follow the convention in \cite{vardisc} with a piecewise constant
interpolation $\rho_n(t,x)$ for the density, setting it equal to
$r_i(0)\D_+y_i(0)/\D_+y_i(t)$ for $x \in [y_i(t), y_{i+1}(t))$.
For initial data, unless otherwise specified, we follow the multipeakon method in choosing $y_i(0) = \xi_i$,
$U_i(0) = u_0(\xi_i)$, $(\rho_0)_i = \rho_0(\xi_i) = r_i(0)$, computing $h_i(0)$ according to $\eqref{eq:h_id}$,
and then compute $H_i(0)$ as \eqref{eq:H}.

\subsubsection{Finite difference schemes}
As they remain a standard method for solving PDEs numerically, it comes as no surprise that several finite
difference schemes have been proposed for the CH equation.
We will consider the convergent dissipative schemes for \eqref{eq:CH} presented in
\cite{Holden2006, Coclite2008}, and the energy-preserving scheme for \eqref{eq:CH} and \eqref{eq:2CH} studied
numerically in \cite{LiuPen2016}.
The schemes in \cite{Holden2006,LiuPen2016} are both based on the following reformulation of \eqref{eq:CH},
\begin{equation}\label{eq:CH_m}
m_t + (m u)_x + m u_x = 0, \qquad m = u - u_{xx}
\end{equation}
for $u = u(t,x)$ and $m = m(t,x)$. Here $u(t,\cdot) \in \Hone(\T)$ means that $m(t,\cdot)$ corresponds to a Radon measure
on $\T$.
Then, with \eqref{eq:CH_m} as starting point, and grid points $x_j = j\dx, \dx > 0$ we may apply finite 
differences, as defined in \eqref{eq:diff}, to obtain various semidiscretizations, specifically
\begin{equation}\label{eq:fdCH_HR}
\dot{m}_j = -\D_-(m_j u_j) - m_j \D_0u_j, \qquad m_j = u_j -\D_-\D_+u_j
\end{equation}
which is the discretization studied in \cite{Holden2006} under the assumption of $m$ initially being a \textit{positive} Radon
measure.
In the same paper they also briefly mention three alternative evolution equations for $m_j$,
\begin{subequations}
  \begin{align}
  \dot{m}_j &= -\D_-(m_j u_j) - m_j \D_+u_j, \\
  \dot{m}_j &= -\D_0(m_j u_j) - m_j \D_0u_j, \label{eq:fdCH_HR2} \\
  \dot{m}_j &= -\D_+(m_j u_j) - m_j \D_-u_j,
  \end{align}
\end{subequations}
which for different reasons were troublesome in practice when integrating in time using the explicit Euler method.
On the other hand, in \cite{LiuPen2016} they use the following discretization,
\begin{equation}\label{eq:fdCH_LP}
\dot{m}_j = -\D_0(m_j u_j) - m_j \D_0u_j, \qquad m_j = u_j -\D_0\D_0u_j
\end{equation}
which coincides with \eqref{eq:fdCH_HR2} except that they use a wider stencil when defining $m_j$.
A difference operator which approximates the $r$\textsuperscript{th} derivative using exactly $r+1$ consecutive grid points
is called compact, cf.\ \cite[Ch.\ 3]{NumEvol}. Clearly, $\D_{\pm}$ and $\D_-\D_+$ are compact difference operators,
while $\D_0$ and $\D_0\D_0$ are not.
As pointed out in \cite[Ch.\ 7]{NumEvol}, noncompact difference operators are notorious for producing spurious oscillations,
and this is exactly the problem reported in \cite{Holden2006} for \eqref{eq:fdCH_HR2}.
Similarly, in \cite{LiuPen2016} the authors remark that oscillations may appear when the solution of
\eqref{eq:CH} becomes less smooth.
In these cases they propose an adaptive strategy of adding numerical viscosity to the scheme with the drawback that the discrete
energy is no longer conserved.
We have not incorporated such a strategy here, as we would like an energy-preserving finite difference scheme to compare with
our energy-preserving variational discretizations.

An invariant-preserving discretization of the 2CH system \eqref{eq:2CH} is also presented in \cite{LiuPen2016},
and using the notation \eqref{eq:avg} we can write it as
\begin{align}\label{eq:fd2CH}
\begin{aligned}
\dot{m}_j &= -\D_0(m_j u_j) - m_j \D_0u_j - \left(\delta\rho_j + \delta\rho_{j-1} \right) \D_0\rho_j, \\
\dot{\rho}_j &= -\D_- (\bar{\rho}_j \bar{u}_j),
\end{aligned}
\end{align}
still with $m_j = u_j -\D_0\D_0u_j$.
The semidiscretizations \eqref{eq:fdCH_LP} and \eqref{eq:fd2CH} are conservative in the sense that both
preserve the invariants
\begin{equation}\label{eq:invariants}
\dx \sum_{i = 0}^{n-1} m_i, \qquad \frac12 \dx \sum_{i = 0}^{n-1} \left( u_i^2 + (\D_0u_i)^2 \right)
\end{equation}
which respectively correspond to the momentum and energy of the system, and have counterparts in
\eqref{eq:disc_mom} and \eqref{eq:disc_energy} for the system \eqref{eq:semidisc_sys}.
In addition, \eqref{eq:fd2CH} preserves the discrete mass
\begin{equation*}
\dx\sum_{i = 0}^{n-1} \rho_i.
\end{equation*}

A somewhat more refined spatial discretization is employed in \cite{Coclite2008}.
This method is based on yet another reformulation of \eqref{eq:CH},
\begin{equation}\label{eq:CH_P}
u_t + uu_x + P_x = 0, \qquad P - P_{xx} = u^2 + \frac12 u_x^2,
\end{equation}
and its discretization reads
\begin{subequations}\label{eq:fdCH_CKR}
  \begin{equation}
  \dot{u}_{j+1/2} + (u_{j+1/2} \vee 0) \D_-u_{j+1/2} + (u_{j+1/2} \wedge 0) \D_+u_{j+1/2} + \D_+P_j = 0 
  \end{equation}
  with
  \begin{equation}
  P_j - \D_-\D_+P_j = (u_{j+1/2} \vee 0)^2 + (u_{j+1/2} \wedge 0)^2 + \frac12 (\D_-u_{j+1/2})^2.
  \end{equation}
\end{subequations}
Not only do they use a staggered grid, but they also use $u \vee 0$ and $u \wedge 0$, the positive and negative parts of $u$
respectively, to obtain the proper upwinding required for dissipation only in the $\D_- u$-part and not the
$u$-part of the associated discrete energy
\begin{equation*}
\dx \sum_i \left( u_i^2 + (\D_-u_i)^2 \right).
\end{equation*}
Contrary to \cite{Holden2006}, this dissipative scheme allows for
initial data of any sign for $u$.
Moreover, this precarious choice of positive and negative parts of $u$ can be linked to
the traveling direction of the wave profile $u$: for $u_{j+1/2} > 0$ the solution moves to
the right, thus values to the right have a greater influence on the solution and it is
reasonable to use a forward difference.
For $u_{j+1/2} < 0$ the solution travels to the left and one analogously uses a backward difference.

For $u_n$ corresponding to \eqref{eq:fdCH_HR}, \eqref{eq:fdCH_LP}, \eqref{eq:fd2CH}, and
\eqref{eq:fdCH_CKR} we have made a piecewise interpolation between the grid points, meaning $(u_n)_x$ is
piecewise constant.
For initial data we define $u_i(0) = u_0(x_i)$ for $x_i = i \dx$, $n\dx = \period$, and apply the corresponding
discrete Helmholtz operator to produce $m_i(0)$ for the schemes \eqref{eq:fdCH_HR}, \eqref{eq:fdCH_LP}, and
\eqref{eq:fd2CH}.

\subsubsection*{A comment on the inversion of the discrete Helmholtz operator}
In both \cite{Holden2006} and \cite{Coclite2008} they compute the Green's function corresponding to the discrete Helmholtz
operator $\id - \D_-\D_+$ using a difference equation. As \cite{Holden2006} concerns the periodic CH equation, they periodize
this function to obtain the periodic Green's function which can be restated in our variables as
\begin{equation*}
g_j^{\text{p}} = \frac{1}{\sqrt{4 + \dx^2}}\frac{e^{-\kappa j} + e^{\kappa(j-n)} }{1 - e^{-\kappa n} } = \frac{1}{\sqrt{4 + \dx^2}} \frac{\cosh(\kappa \left( j - \frac{n}{2} \right))}{\sinh\left(\kappa \frac{n}{2}\right)}
\end{equation*}
for $j \in \{0,\dots,{n-1}\}$ with
\begin{equation*}
\kappa = \ln\left( 1 + \frac{\dx^2}{2} + \dx\sqrt{4 + \dx^2} \right), \qquad n \dx = \period.
\end{equation*}
Notice the resemblance to the periodized exponential \eqref{eq:ker_per} in the continuous case.
Defining the matrix $M$ corresponding to the discrete Helmholtz operator with periodic boundary conditions,
\begin{equation*}
M = I - \frac{1}{\dx^2} \begin{bmatrix}
-2 & 1 & 0 & \cdots & 1 \\
1 & -2 & \ddots & \ddots & \vdots \\
0 & \ddots & \ddots & \ddots & 0 \\
\vdots & \ddots & \ddots & -2 & 1 \\
1 & \cdots & 0 & 1 & -2
\end{bmatrix},
\end{equation*}
we have $(M g^{\text{p}})_j = \delta_{0,j}$ for the Kronecker delta $\delta_{i,j}$. Consequently, they compute $u$ from $m$ by a convolution
\begin{equation*}
u_j = \sum_{k=0}^{n-1}g^{\text{p}}_{j-k} m_k.
\end{equation*}
In the implementation they compute this convolution efficiently by employing the \emph{fast Fourier transform} (FFT) and the
convolution theorem for the discrete Fourier transform (DFT), namely $u = \Fcal_n^{-1}\left[ \Fcal_n[g^{\text{p}}] \cdot \Fcal_n[m] \right]$
with
\begin{equation*}
(\Fcal_n[f])_j = \sum_{k=0}^{n-1}f_k e^{-\iu \frac{2\pi}{n}k j }, \qquad (\Fcal_n^{-1}[\hat{f}])_j = \frac1n \sum_{k=0}^{n-1}\hat{f}_k e^{\iu \frac{2\pi}{n}k j }.
\end{equation*}
The discrete Fourier transform is more than an efficient
tool for evaluating the convolution in this case. Indeed, the matrix $M$ is circulant and thus diagonalizable using
the DFT matrix, cf.\ \cite[p.\ 379]{MatrixApplied}.
Defining the matrix $V$ through $V_{j,k} = \frac{1}{\sqrt{n}}e^{\iu \frac{2\pi}{n} j k}$ for
$j,k \in \{0,\dots,{n-1}\}$ we find that $V^* M V$ is indeed a diagonal matrix containing the eigenvalues of $M$,
\begin{equation*}
d_j = 1 + \left(\frac{2}{\dx} \sin\left(\frac{\pi j}{n}\right) \right)^2, \qquad j \in {0,\dots,{n-1}}.
\end{equation*}
A little rearrangement then shows $(\Fcal_n[g^\text{p}])_j =d_j^{-1}$, or equivalently $g^\text{p}_j = (\Fcal_n^{-1}[d^{-1}])_j$, revealing an alternative method for computing the periodic Green's function or computing $u$ through $u = \Fcal_n^{-1}\left[ d^{-1} \cdot \Fcal_n[m] \right]$.
This can also be seen by directly inserting $\Fcal_n^{-1}[\Fcal_n[g^\text{p}]]$ for $g^\text{p}$ in the difference equation
\begin{equation*}
-\frac1\dx g_{j+1}^\text{p} + \left(1+\frac2\dx\right) g_j^\text{p} - \frac1\dx g_{j-1}^\text{p} = \delta_{0,j}
\end{equation*}
considered in \cite{Holden2006,Coclite2008}, rearranging coefficients and applying the DFT.

The above method is also convenient for computing the inverse of the noncompact discrete Helmholtz matrix in \cite{LiuPen2016},
as its eigenvalues
\begin{equation*}
1 + \left(\frac{1}{\dx} \sin\left(\frac{2 \pi j}{n}\right) \right)^2, \qquad j \in {0,\dots,{n-1}}
\end{equation*}
can be computed from circulant matrix theory.
Hence, this is how we have implemented the computation of $u$ in \eqref{eq:fdCH_HR}, \eqref{eq:fdCH_LP}, and of $P$ in \eqref{eq:fdCH_CKR}.
In our examples, when compared to computing $g^{\text{p}}$ by solving the nearly tridiagonal system with $M$,
the FFT method was consistently faster.

\subsubsection{Pseudospectral (Fourier collocation) scheme}
Let us consider the so-called pseudospectral scheme used in the study of traveling waves for the CH equation
in \cite{Kalisch2005}, see also \cite{SpectralMatlab} for an introduction to the general idea.
This method is based on applying Fourier series to \eqref{eq:CH} and solving the resulting evolution equation in the
frequency domain.
Introducing the scaling factor $a \coloneqq \period/2\pi$ and assuming the discretization parameter $n$ to be even, the
pseudospectral method for \eqref{eq:CH} with period $\period$ can be written
\begin{equation}\label{eq:PS}
\dot{V}_n(t,k) = -\frac{\iu k}{2(a^2 + k^2)} \left[ (3 a^2 + k^2)  \Fcal_n \! \left[(\Fcal_n^{-1}[V])^2\right]\!\!(t,k) + \Fcal_n \!\left[(\Fcal_n^{-1}[\iu k V])^2\right]\!\!(t,k) \right]
\end{equation}
with $V(0,k) = \Fcal_n[u_0](k)$ for $k \in \{-\frac{n}{2},\dots,\frac{n}{2}-1 \}$.
Here $\iu k V$ means the pointwise product of the vectors $\iu [-\frac{n}{2},\dots,\frac{n}{2}-1]$ and $[V(-\frac{n}{2}),\dots,V(\frac{n}{2}-1)]$, while $\Fcal_n$ and $\Fcal_n^{-1}$ are the discrete Fourier transform and its inverse, defined as
\begin{align*}
\Fcal_n[v](k) &= \sum_{j = 0}^{n-1} v(x_{j}) e^{-i k x_{j}}, \\
\Fcal_n^{-1}[V](x_j) &= \frac{1}{n}\sum_{k = -n/2}^{n/2-1} V(k) e^{i k x_{j}}
\end{align*}
for $x_j = 2\pi j /n$, $j \in \{0,\dots,n-1\}$.
The right-hand side of \eqref{eq:PS} can be efficiently computed by applying FFT, and to evaluate the resulting $u$
and $u_x$ on the grid $x_j$ one computes $u_n(x_j) = \Fcal_n^{-1}[V](x_j)$ and
$(u_n)_x(x_j) = \Fcal_n^{-1}[\iu k V](x_j)$.
Unfortunately, this scheme is prone to \textit{aliasing}, and for this reason the authors of \cite{Kalisch2006} propose a
modified version of the scheme which employs the \textit{Orszag 2/3-rule}.
For more details on aliasing and the 2/3 rule we refer to \cite[Ch.~11]{ChebSpec}.
This 2/3 rule amounts to removing the Fourier coefficients $V(k)$ corresponding to one third of the frequencies $k$ before
applying the inverse transform in \eqref{eq:PS}, specifically those frequencies of largest absolute value.
In our setting this means that we keep $V(k)$ as is for $k \in \{-\frac{n}{2},\dots,\frac{n}{2}-1\}$ satisfying
$\abs{k} \le \frac{n}{3}$, while setting $V(k) = 0$ for the rest.
For this \textit{dealiased} scheme, which they call a Fourier collocation method, the authors in \cite{Kalisch2006}
prove convergence in $\Ltwo$-norm for sufficiently regular solutions of \eqref{eq:CH}.

When interpolating $u_n$ on a denser grid containing $x_j$ we must use the corresponding real Fourier basis function
for each frequency $k$ to obtain the correct representation of the pseudospectral solution, which will always be
smooth. For this we use the routine \texttt{interpft} which interpolates using exactly Fourier basis functions.

\subsection{Example 1: Smooth traveling waves}
To the best of our knowledge, there are no explicit formulae for smooth traveling wave solutions of either \eqref{eq:CH} or \eqref{eq:2CH}, and to obtain such solutions we make use of numerical integration in the spirit of \cite{Cohen2014} and \cite{Cohen2008}.

\subsubsection{Computing reference solution for the CH equation}
Traveling waves are solutions of the form $u(t,x) = \varphi(x-ct)$.
Inserting this ansatz in \eqref{eq:CH} yields
\begin{equation*}
-c\varphi' + c\varphi''' + 3\varphi\varphi' - 2\varphi'\varphi'' - \varphi\varphi''' = 0.
\end{equation*}
Assuming $\varphi \neq c$ and multiplying with $(c-\varphi)$ the above equation can be rearranged to  yield
\begin{equation*}
\left( (c-\varphi)^2 (\varphi''-\varphi) \right)' = 0,
\end{equation*}
which after integration gives
\begin{equation*}
\varphi''(z) = \varphi(z) + \frac{B}{(c-\varphi(z))^2}
\end{equation*}
for some constant $B \in \R$.
For the right choice of $B$, this ordinary differential equation can be integrated numerically to give periodic
smooth solutions.
Inspired by \cite{Cohen2008} we have chosen $c = -B = 3$ and initial conditions $\varphi(0) = 1$, $\varphi'(0) = 0$.
To integrate we used \texttt{ode45} with very strict tolerances, namely $\texttt{AbsTol} = \texttt{eps}$ and
$\texttt{RelTol} = 100\:\texttt{eps}$, where $\texttt{eps} = 2^{-52}$ is the distance from 1.0 to the next double
precision floating point number representable in \textsc{Matlab}.
After integration we found the solution to have period $p = 6.4695469424989$, where the first ten decimal digits
agree with the period found in the experiments section of \cite{Cohen2008}.

\subsubsection{Numerical results for the CH equation}
Figures \ref{fig:smoothCH_N16} and \ref{fig:smoothCH_results} display numerical results for the smooth reference
solution above after moving one period $\period$ to the right.
As the traveling wave has velocity $c = 3$, this corresponds to integrating over a time period $\period/3$.
To integrate in time we have applied \texttt{ode45} with parameters $\texttt{AbsTol} = \texttt{RelTol} = 10^{-10}$.

To highlight the different properties of each scheme, Figure \ref{fig:smoothCH_N16} displays $u_n$ and $(u_n)_x$
for the various schemes for the low number $n = 2^4$ and interpolated on a reference grid with step size
$2^{-10}\period$.
It is apparent how the dissipative nature of the schemes \eqref{eq:fdCH_HR} and \eqref{eq:fdCH_CKR}
reduces the height of the traveling wave such that it lags behind the true solution.
This effect is particularly severe for \eqref{eq:fdCH_HR}, which probably explains why its error
displayed in Figure \ref{fig:smoothCH_results_rates} is consistently the largest.
The perhaps most obvious feature in Figure \ref{fig:smoothCH_N16_ux} is the large-amplitude deviations introduced
by the discontinuities for the multipeakon scheme.
As indicated by its decreasing $\Hone$-error in Figure \ref{fig:smoothCH_results_rates},
the amplitudes of these discrepancies reduce as $n$ increases.

\begin{figure}
  \begin{subfigure}{0.495\textwidth}
    \includegraphics[width=1\textwidth]{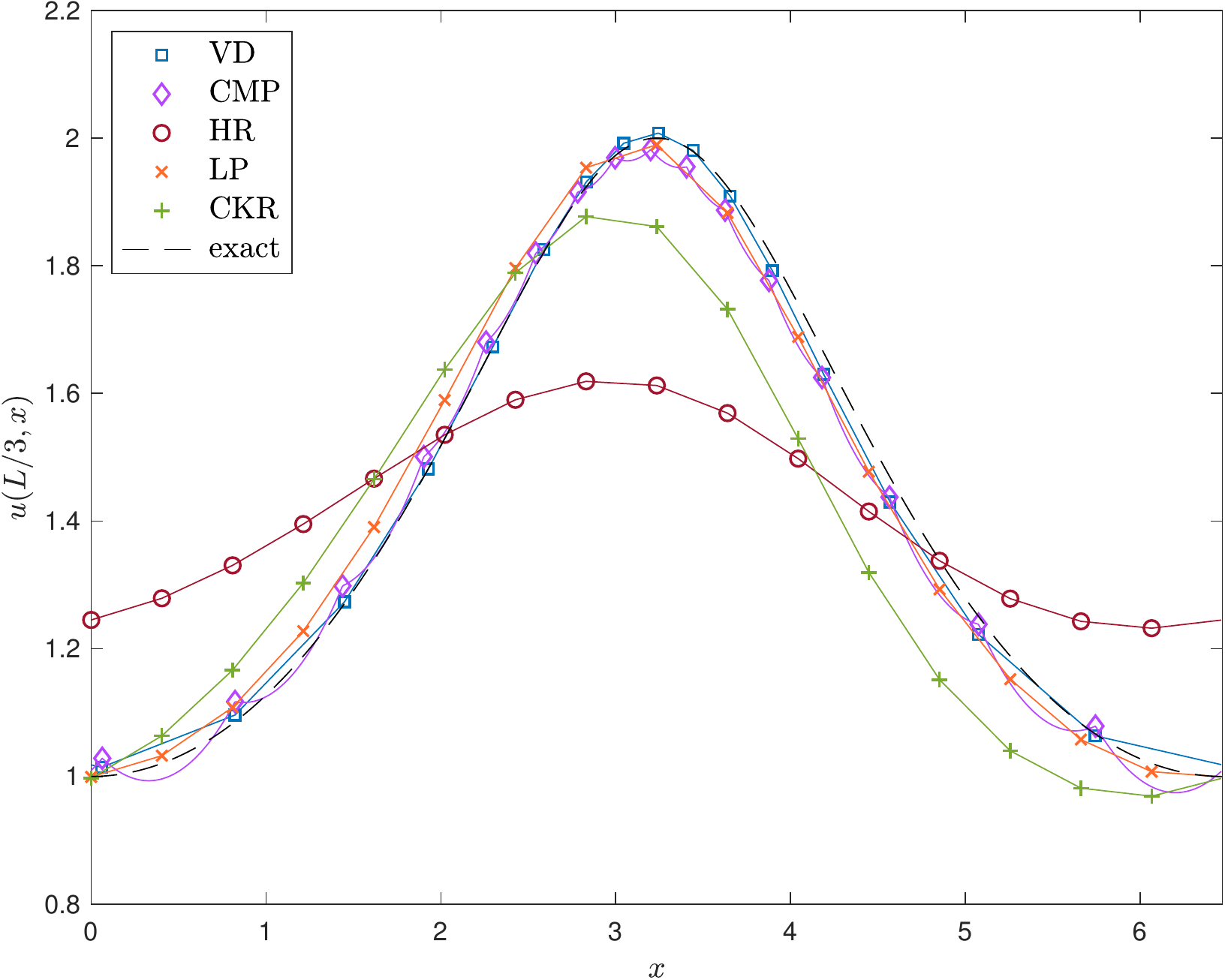}
    \caption{} \label{fig:smoothCH_N16_u}
  \end{subfigure}
  \begin{subfigure}{0.495\textwidth}
    \includegraphics[width=1\textwidth]{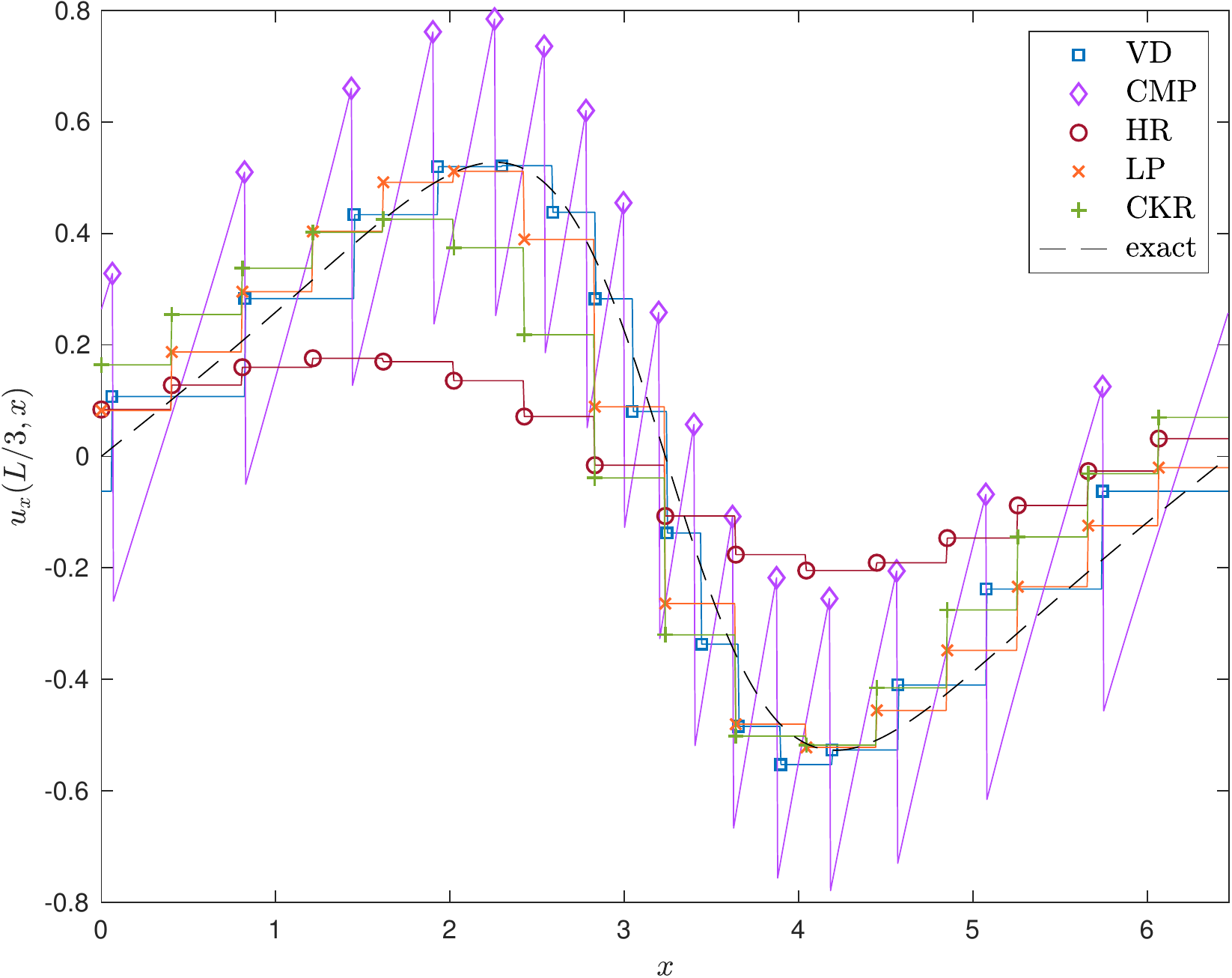}
    \caption{} \label{fig:smoothCH_N16_ux}
  \end{subfigure}
  \caption{Smooth traveling wave for the CH equation. Plot of the interpolated numerical solutions $u_n$ (\textsc{a}) and $(u_n)_x$ (\textsc{b}) at time $t = \period/3$ for $n = 2^4$ and evaluated on a reference grid with step size $2^{-10}\period$. The schemes considered are VD \eqref{eq:VD_H}, CMP \eqref{eq:cmp_per}, HR \eqref{eq:fdCH_HR}, LP \eqref{eq:fdCH_LP}, and CKR \eqref{eq:fdCH_CKR}. }
  \label{fig:smoothCH_N16}
\end{figure}

\begin{figure}
  \begin{subfigure}{0.495\textwidth}
    \includegraphics[width=1\textwidth]{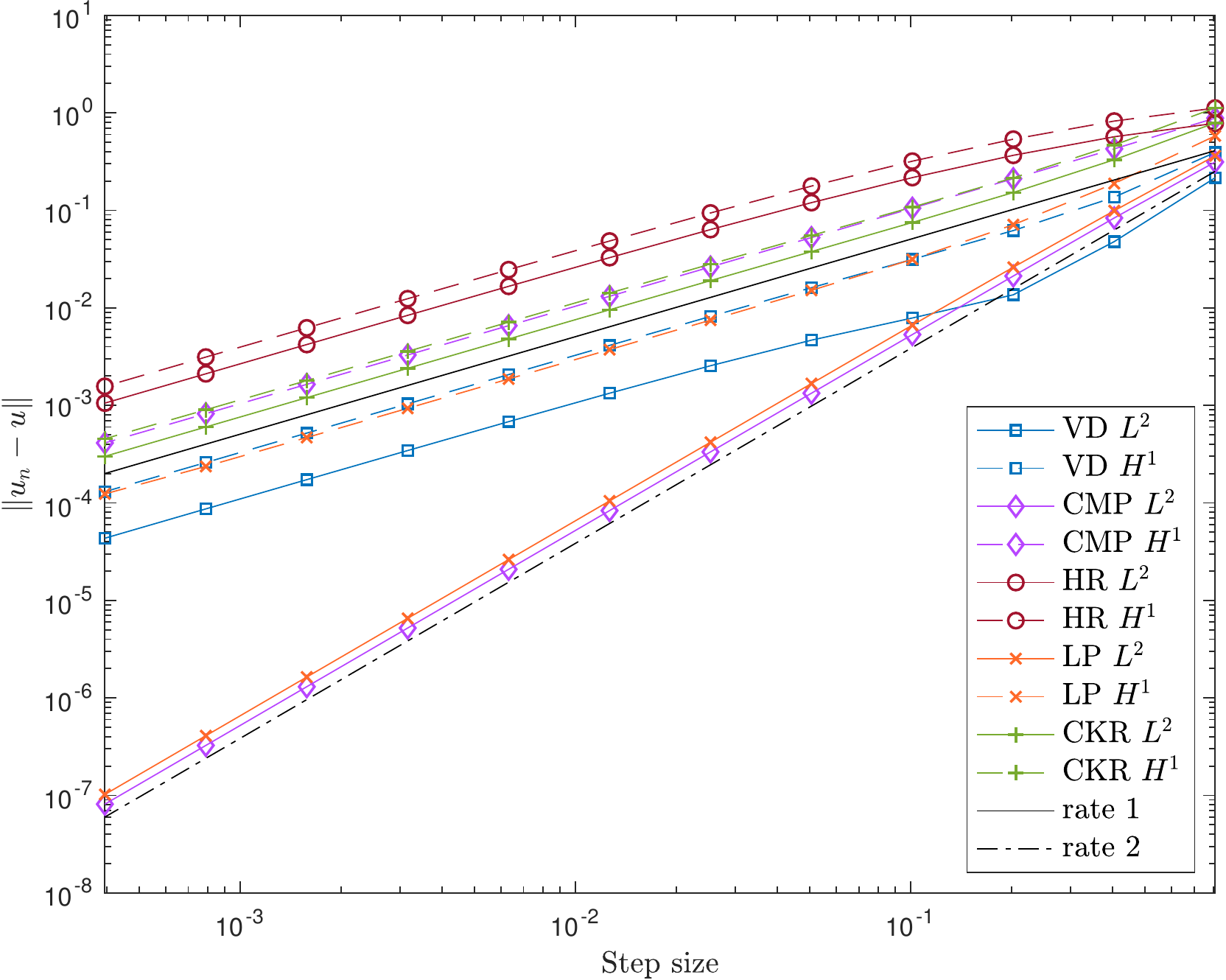}
    \caption{} \label{fig:smoothCH_results_rates}
  \end{subfigure}
  \begin{subfigure}{0.495\textwidth}
    \includegraphics[width=1\textwidth]{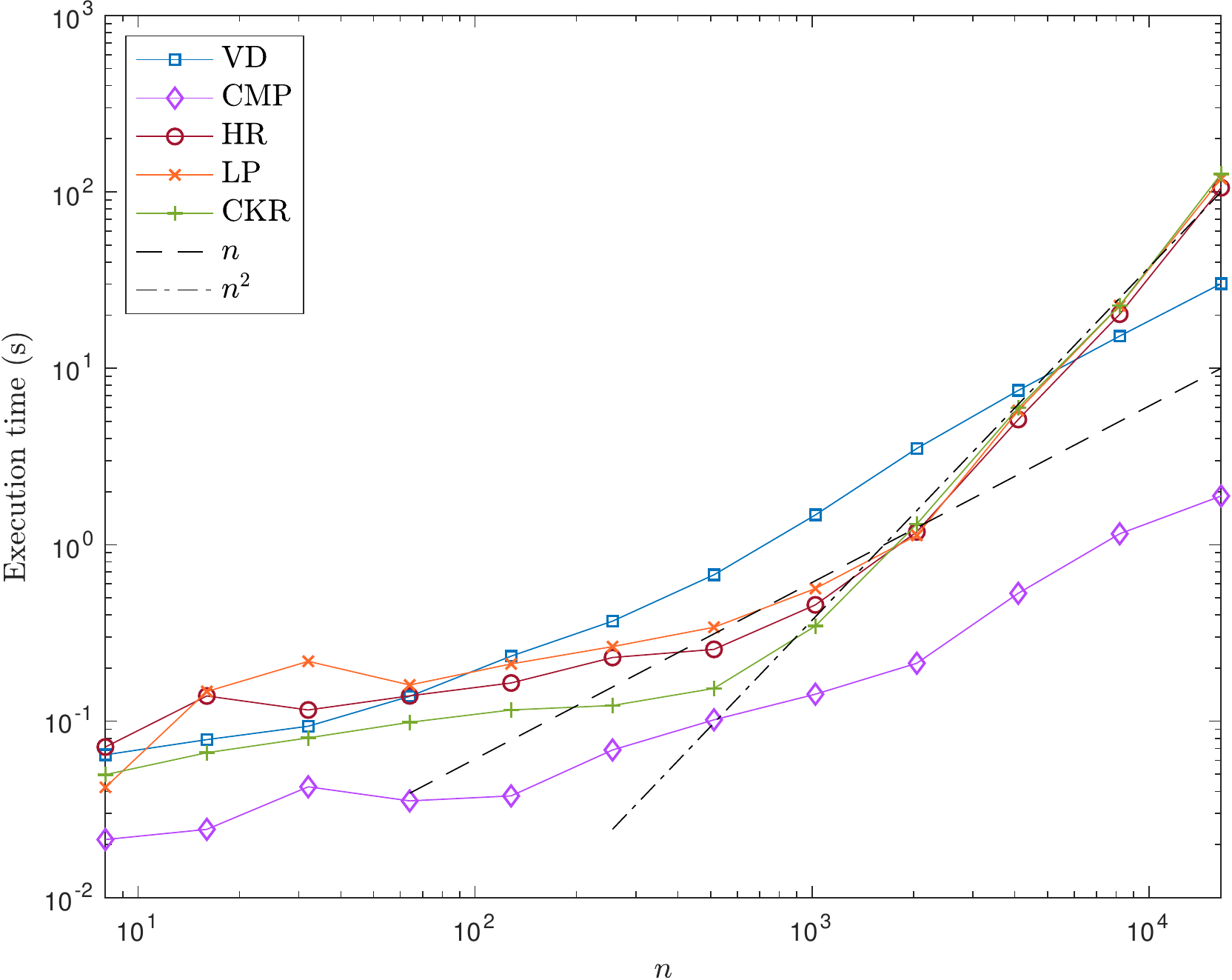}
    \caption{} \label{fig:smoothCH_results_times}
  \end{subfigure}
  \caption{Smooth traveling wave for the CH equation. Errors in $\Ltwo$- and $\Hone$-norms after one period (\textsc{a}), and execution times for \texttt{ode45} in seconds (\textsc{b}). The schemes tested are VD \eqref{eq:VD_H}, CMP \eqref{eq:cmp_per}, HR \eqref{eq:fdCH_HR}, LP \eqref{eq:fdCH_LP} and CKR \eqref{eq:fdCH_CKR} for step sizes $2^{-k} \period$ where $3 \le k \le 14$,
    and evaluated on a reference grid with step size $2^{-16}\period$.}
  \label{fig:smoothCH_results}
\end{figure}

Figure \ref{fig:smoothCH_results_rates} contains $\Ltwo$- and $\Hone$-errors of the interpolated solutions for
$n = 2^k$ with $k \in \{3,\dots,14\}$, evaluated on a reference grid with $k_0 = 16$.
Before commenting on these convergence results, we underline that the pseudospectral method \eqref{eq:PS}
has not been included in the figure, as its superior performance for this example would make it
hard to differ between the plots for the remaining methods.
Indeed, this scheme displays so-called spectral convergence in both $\Ltwo$- and $\Hone$-norm, and exhibits
an $\Ltwo$-error close to rounding error already for $n = 2^6$.

For the remaining methods it is perhaps not surprising that the finite difference scheme \eqref{eq:fdCH_LP} based
on central differences in general has the smallest error in $\Hone$-norm for this smooth reference solution,
exhibiting convergence orders of 2 and 1 for $\Ltwo$ and $\Hone$ respectively.
However, we observe that the $\Ltwo$-error of the multipeakon scheme is consistently the lowest, but its
convergence in $\Hone$ is impeded by its irregular derivative.
Moreover, it appears that for small $n$, i.e., $n \le 2^5$ in this setting, the variational scheme
\eqref{eq:VD_H} performs better.

An observation regarding the execution time of \texttt{ode45} is that the finite difference schemes
\eqref{eq:fdCH_HR}, \eqref{eq:fdCH_LP} and \eqref{eq:fdCH_CKR} seem to experience some tipping point around
$k = 11$ where their running times tend to be of complexity $\Ocal(n^2)$ rather than $\Ocal(n)$, see
Figure \ref{fig:smoothCH_results_times}.
A closer look at the statistics for the time integrator in these cases reveals that from $k = 11$ and onwards,
the solver starts experiencing failed attempts at satisfying the specified error tolerances, thus increasing
the execution time.
This does not occur for the schemes in Lagrangian coordinates, which exhibit execution times aligning well with
the $\Ocal(n)$-reference line.
A possible explanation for this could be that the semidiscrete schemes based in Lagrangian coordinates are easier
to handle for time integrator.
Indeed, the almost semilinear structure of the ODE system corresponding to \eqref{eq:VD_H} in \cite{vardisc} is
key to its existence and uniqueness proofs, and perhaps this structure is advantageous also for the ODE solvers.

It is however important to impose sufficiently strict error tolerances for the temporal integration error to be
negligible compared to the spatial discretization for the smallest step sizes.
Furthermore, in the case of \eqref{eq:fdCH_LP} it appears important to not have too large error tolerances
irrespective of $n$ to avoid oscillations in $m_i$.
Since the convolution with the periodic Green's function to compute $u_i$ from $m_i$ is a regularizing process,
we observe from experiments that seemingly well-behaved $u_i$ may hide extremely oscillatory $m_i$.
This is not surprising, as $m_i$ is a discretization of what in general may be a measure, and thus much less regular
than $u_i$.

It should be emphasized that the multipeakon scheme \eqref{eq:cmp_per} is considerably faster than the
other schemes, which likely comes from it being the only scheme where no matrix equations are solved.
Thus, the fast summation algorithm appears to benefit the multipeakon scheme in this direction.

\subsubsection{Computing reference solution for the 2CH system}
\label{sss:smooth2CH}
Here we essentially follow the steps for the CH equation, with some slight modifications.
We make the ansatz $u(t,x) = \varphi(x-ct)$ and $\rho(t,x) = \psi(x-ct)$.
Plugging these into \eqref{eq:2CH} leads to the system
\begin{subequations}
  \begin{align}
  -c\varphi' + c\varphi''' + 3\varphi\varphi' - 2\varphi'\varphi'' - \varphi\varphi''' + \psi \psi' &= 0 \label{eq:smooth2CH_u}\\
  -c\psi' + (\psi \varphi)' &= 0. \label{eq:smooth2CH_rho}
  \end{align}
\end{subequations}
Integration of \eqref{eq:smooth2CH_rho} yields the relation
\begin{equation}\label{eq:smooth2CH_rho2}
\psi = \frac{A}{c-\varphi},
\end{equation}
for some constant $A \in \R$.
This expression makes sense as long as $\varphi \neq c$, which we will assume from now on.
Using the above relation we can replace $\varphi$ by \eqref{eq:smooth2CH_rho2} in \eqref{eq:smooth2CH_u}. Rearranging we get in a similar manner as for the CH equation,
\begin{equation*}
\left( (c-\varphi)^2 (\varphi''-\varphi) +\frac{A^2}{c-\varphi}\right)' = 0.
\end{equation*}
Integration gives
\begin{equation*}
\varphi''(t) = \varphi(t) - \frac{A^2}{(c-\varphi(t))^3} + \frac{B}{(c-\varphi(t))^2}
\end{equation*}
for some constant $B \in \R$.
We follow \cite{Cohen2014} in choosing $c = A = -B = 2$ and initial conditions $\varphi(0) = 0.5$, $\varphi'(0) = 0$.
Proceeding as in the case of the CH equation we obtain the period $p = 5.1475159326651$
where the four first decimal digits agree with the four decimal places provided in \cite{Cohen2014}.

\subsubsection{Numerical results for the 2CH system}
For this example we have only compared the variational scheme \eqref{eq:VD_h} to \eqref{eq:fd2CH}, as these are
the only methods presented in Section \ref{ss:review} applicable to the 2CH system.
As in the experiment for the CH equation we want to measure the error after the wave has moved a distance $\L$ to
the right, corresponding to one period.
Since the velocity of the solution now is $c = 2$, this corresponds to integrating from $t = 0$ to $t = \frac{\period}{2}$, which we did using \texttt{ode45} with $\texttt{AbsTol} = \texttt{RelTol} = 10^{-8}$.
Figure \ref{fig:smooth2CH_plot} shows the interpolants $u_n$ and $\rho_n$ for $n = 2^4$, together with the
exact solutions.
\begin{figure}
  \begin{subfigure}{0.495\textwidth}
    \includegraphics[width=1\textwidth]{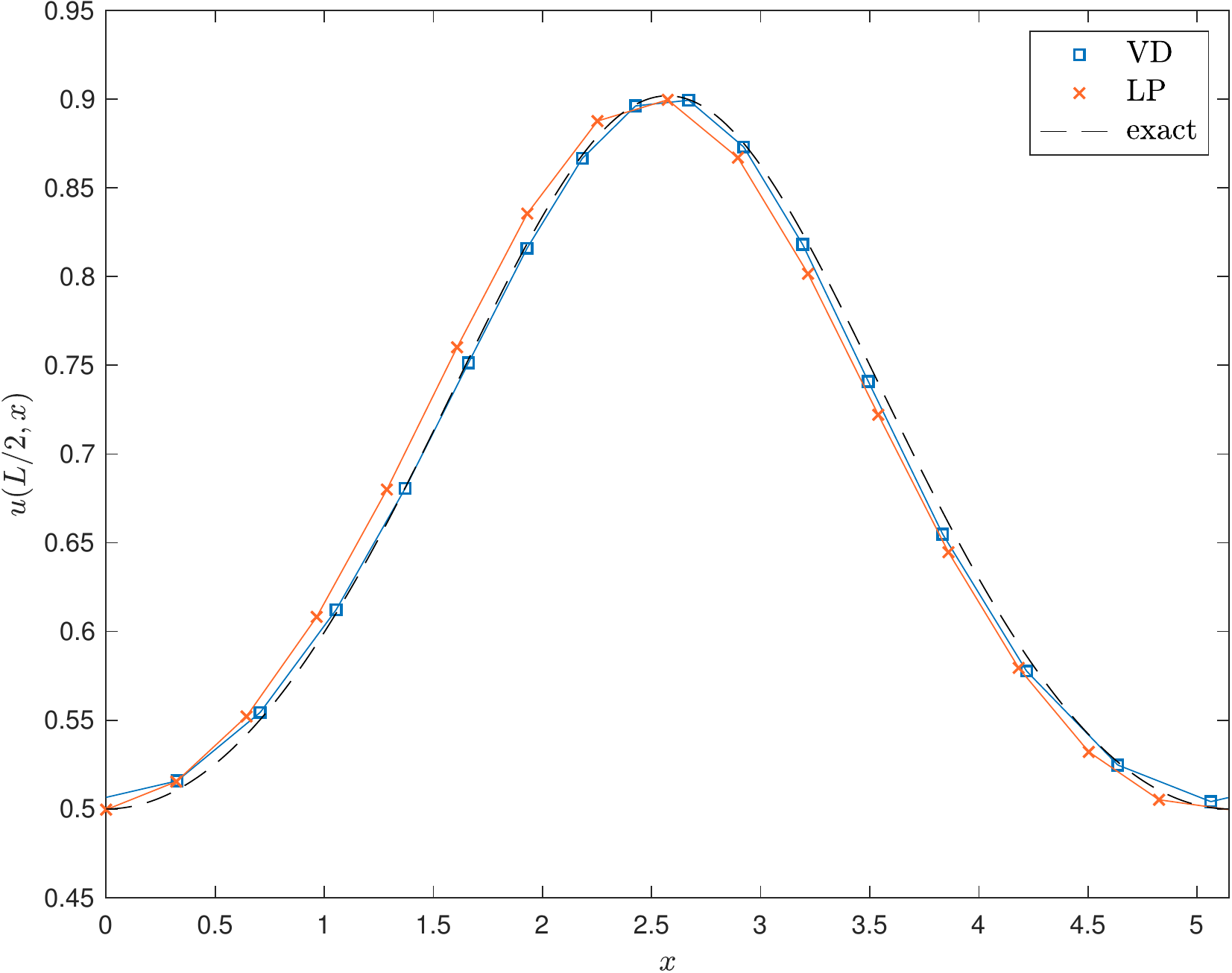}
    \caption{} \label{fig:smooth2CH_plot_u}
  \end{subfigure}
  \begin{subfigure}{0.495\textwidth}
    \includegraphics[width=1\textwidth]{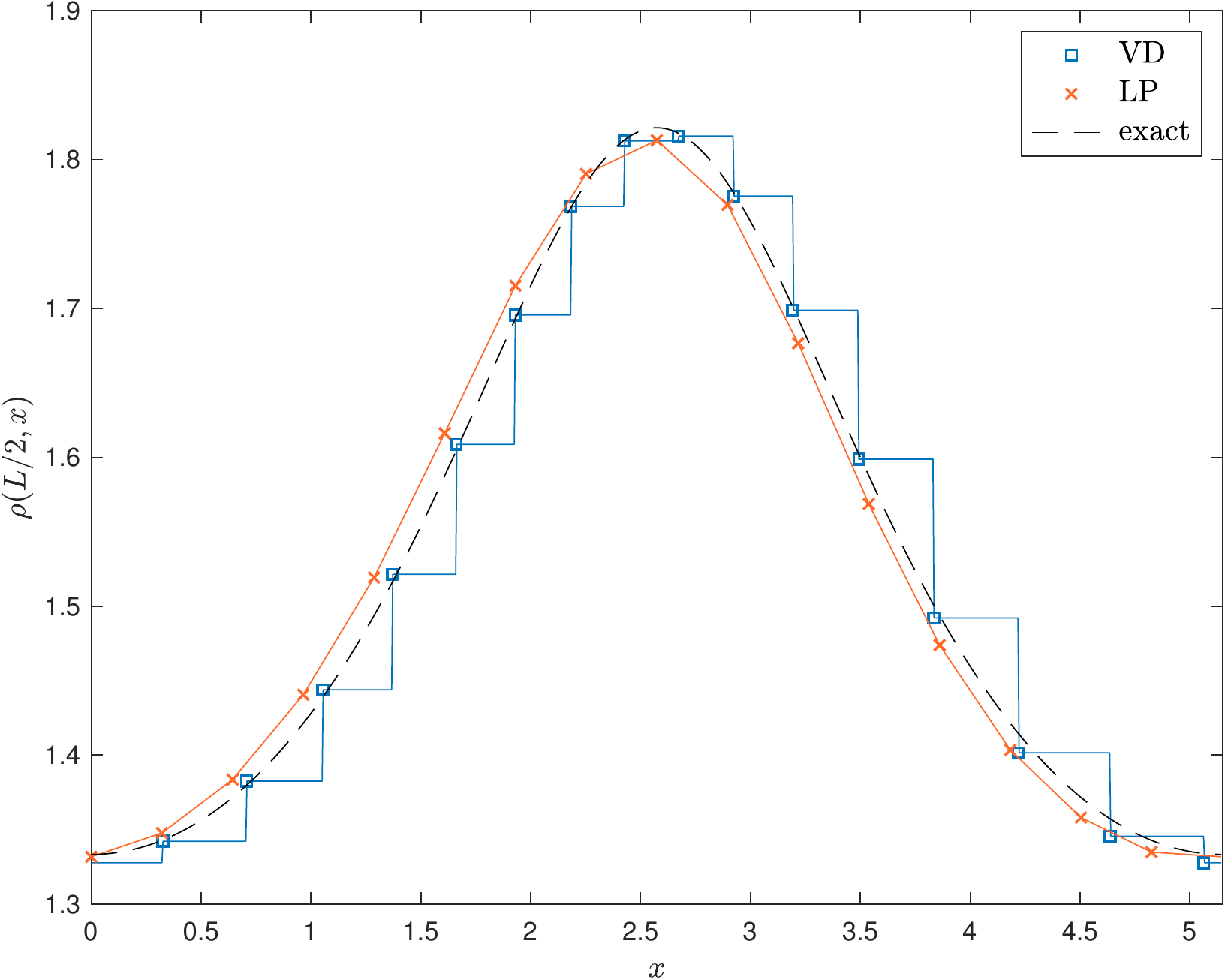}
    \caption{} \label{fig:smooth2CH_plot_rho}
  \end{subfigure}
  \caption{Smooth traveling wave for the 2CH system. Plot of the interpolated numerical solutions $u_n$ (\textsc{a}) and $\rho_n$ (\textsc{b}) at time $t = \period/2$ for $n = 2^4$ and evaluated on a reference grid with step size $2^{-10}\period$. The schemes considered are VD \eqref{eq:VD_H} and LP \eqref{eq:fd2CH}.}
  \label{fig:smooth2CH_plot}
\end{figure}
The reader may wonder why we have interpolated $\rho_n$ differently for the two methods, as it is clear from
Figure \ref{fig:smooth2CH_plot_rho} that the variational scheme would be much closer to the reference solution
if one had used a piecewise linear interpolation of $\rho_n$.
In fact, when looking at the convergence rates of $\rho_n$ in this case, the variational scheme exhibits rate 1
convergence for both piecewise linear and piecewise constant interpolations.
Since the discrete density is computed using $\D_+y$ which is piecewise constant when $y$ is piecewise linear,
we follow \cite{vardisc} in using a piecewise constant $\rho_n$.
For the scheme \eqref{eq:fd2CH} on the other hand, the convergence rate actually depends on the interpolation, as
piecewise linear $\rho_n$ has convergence rate 2, while piecewise constant interpolation gives approximate rate 1.
It is then only reasonable to use the interpolation which performs better.
These convergence rates are shown in Figure \ref{fig:smooth2CH_results} together with execution times for
\texttt{ode45}, where $n = 2^k$, $k \in \{3,\dots,14\}$, and the reference grid has $k_0 = 16$.
\begin{figure}
  \begin{subfigure}{0.495\textwidth}
    \includegraphics[width=1\textwidth]{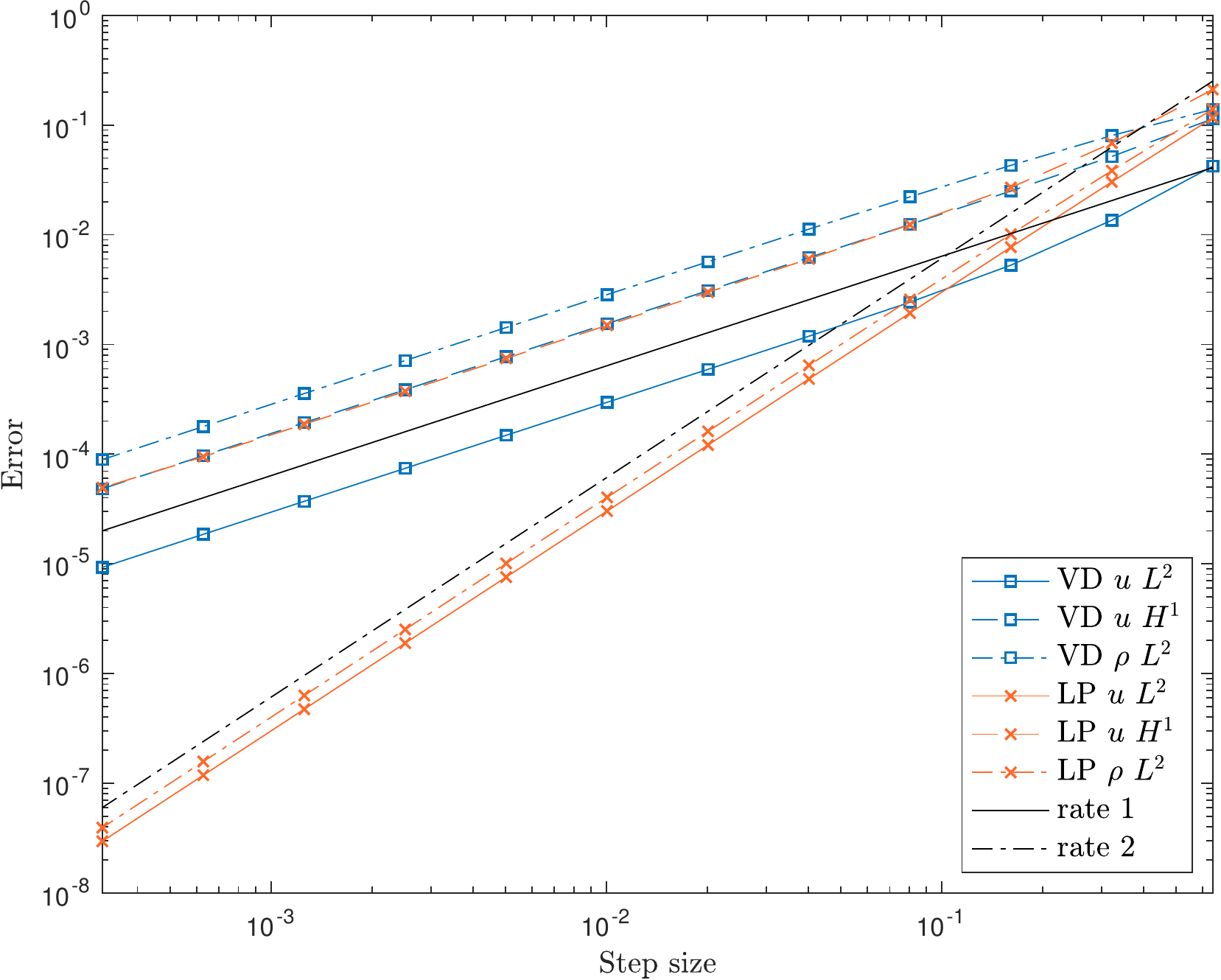}
    \caption{}
  \end{subfigure}
  \begin{subfigure}{0.495\textwidth}
    \includegraphics[width=1\textwidth]{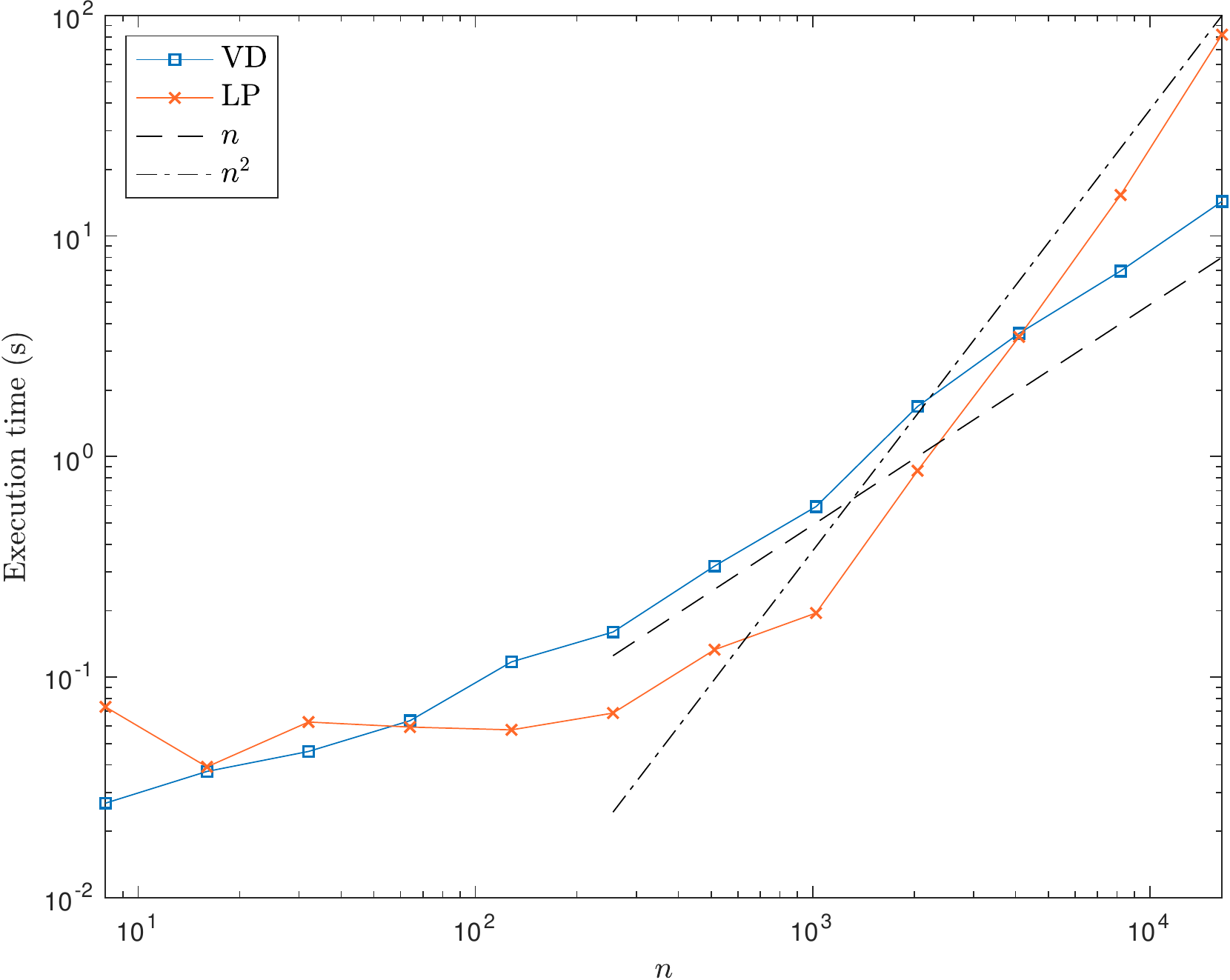}
    \caption{}
  \end{subfigure}
  \caption{Smooth traveling wave for the 2CH system. Errors in $\Ltwo$- and $\Hone$-norms for $u_n$ and $\rho_n$ after one period (\textsc{a}), and execution times for \texttt{ode45} in seconds (\textsc{b}). The schemes tested are VD \eqref{eq:VD_h} and HR \eqref{eq:fd2CH} for step sizes $2^{-k} \period$ where $3 \le k \le 14$,
    and evaluated on a reference grid with step size $2^{-16}\period$.}
  \label{fig:smooth2CH_results}
\end{figure}
We observe that both schemes exhibit highly consistent rates, but the difference scheme \eqref{eq:fd2CH} 
outperforms \eqref{eq:VD_H} convergence-wise by having smallest errors overall and rate 2 convergence in $\Ltwo$-norm.
For the run times we see that for the largest $n = 2^k$, namely $k > 12$, the difference scheme is considerably
slower than the variational scheme for these tolerances.

\subsection{Example 2: Periodic peakon}
It is now well known, cf.\ \cite{Camassa1993}, that a single peakon
\begin{equation*}
u(t,x) = c e^{-\abs{x-x_0-ct}}
\end{equation*}
is a weak solution of \eqref{eq:CH} with its peak at $x = x_0 + c t$.
The periodic counterpart of this solution is
\begin{equation*}
u(t,x) = c \frac{\cosh\left(\abs{x-x_0 -ct}-\frac{\period}{2}\right)}{\cosh\left(\frac{\period}{2}\right)}
\end{equation*}
valid for $\abs{x-x_0-ct} \le \period$, and periodically extended outside this interval.
This formula for the periodic peakon can in fact be deduced from \eqref{eq:cmp_per} for $n = 1$, or found in, e.g., \cite[Eq.\ (8.5)]{Lenells2005}.
Setting $x_0 = \frac12$, $c = \period = 1$, and $t = 0$ we use this function as initial datum on $[0,1]$
for a numerical example.
As the periodic multipeakon scheme reduces to exactly this peakon for $n = 1$, $y_1(0) = \frac12$ and
$u_1(0) = 1$, we have chosen to omit this scheme for the experiment, and rather compare how well the other
schemes approximate a peakon solution.

As one could expect, the schemes generally performed worse for this problem compared to the smooth traveling
wave, and so we could reduce the
tolerances for the time integrator to $\texttt{AbsTol} = \texttt{RelTol} = 10^{-8}$ with no change in
leading digits for the errors.
However, as the finite difference schemes, and especially the noncompact scheme \eqref{eq:fdCH_LP}, were quite slow
when using \texttt{ode45} for large values of $n$, we instead used the solver \texttt{ode113} which proved to be
somewhat faster in this case.
Moreover, when computing the approximate $\Hone$-error \eqref{eq:H1_approx} in this case, we encounter the problem
of the reference solution derivative not being defined at the peak.
To circumvent this issue, we measure the error at time $t = \period$ on a shifted reference grid.
That is, we evaluate \eqref{eq:H1_approx} on $x_i = (i + \tfrac12) 2^{-k_0}$ instead of $x_i = i 2^{-k_0}$ for
$i \in \{0,\dots,2^{k_0}-1\}$ to ensure $x_i \neq \tfrac12$.
We plot the solutions again for the relatively small $n = 2^4$ to highlight differences between the schemes
in Figure \ref{fig:per_peakCH_plot}.
\begin{figure}
  \begin{subfigure}{0.495\textwidth}
    \includegraphics[width=1\textwidth]{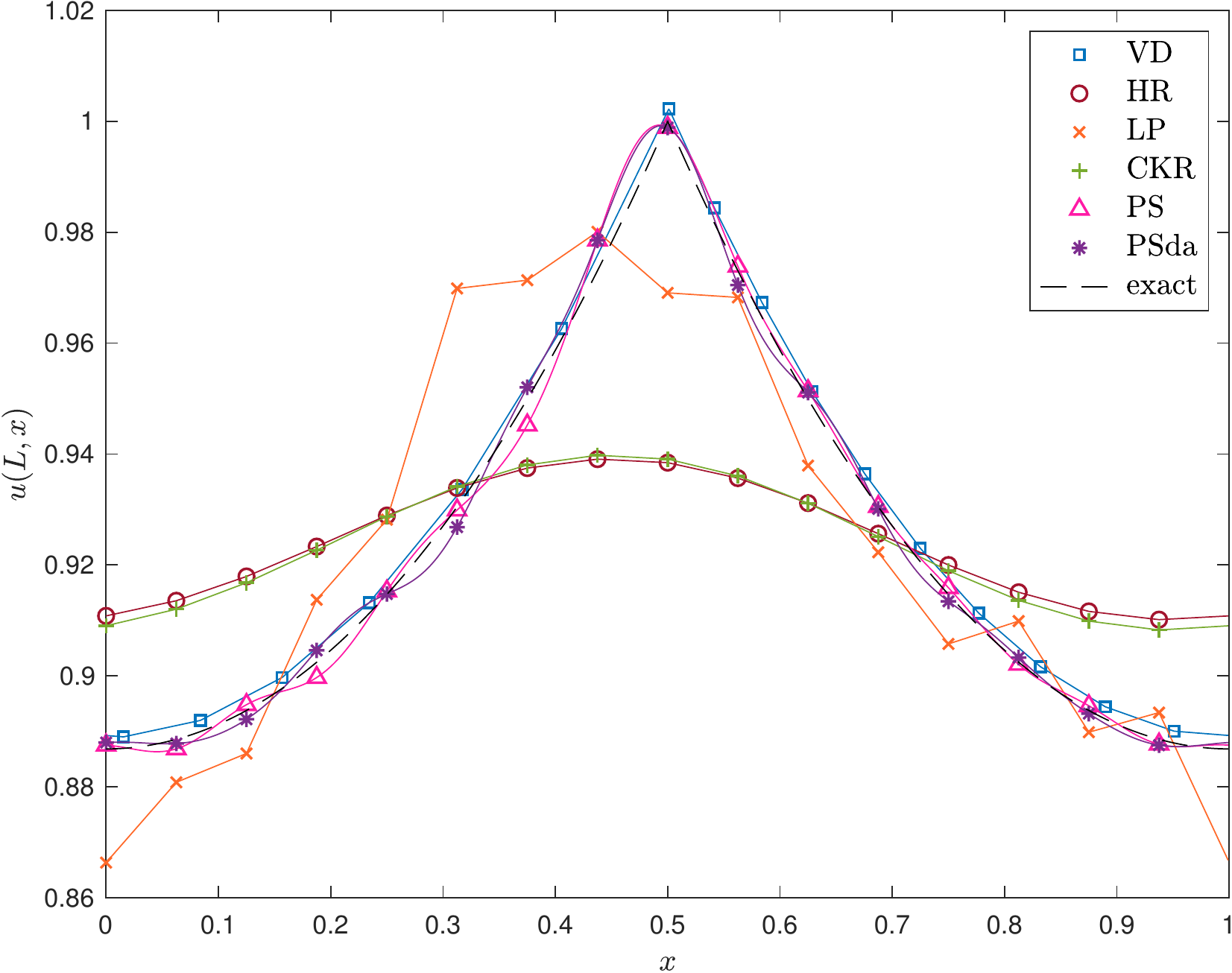}
  \end{subfigure}
  \begin{subfigure}{0.495\textwidth}
    \includegraphics[width=1\textwidth]{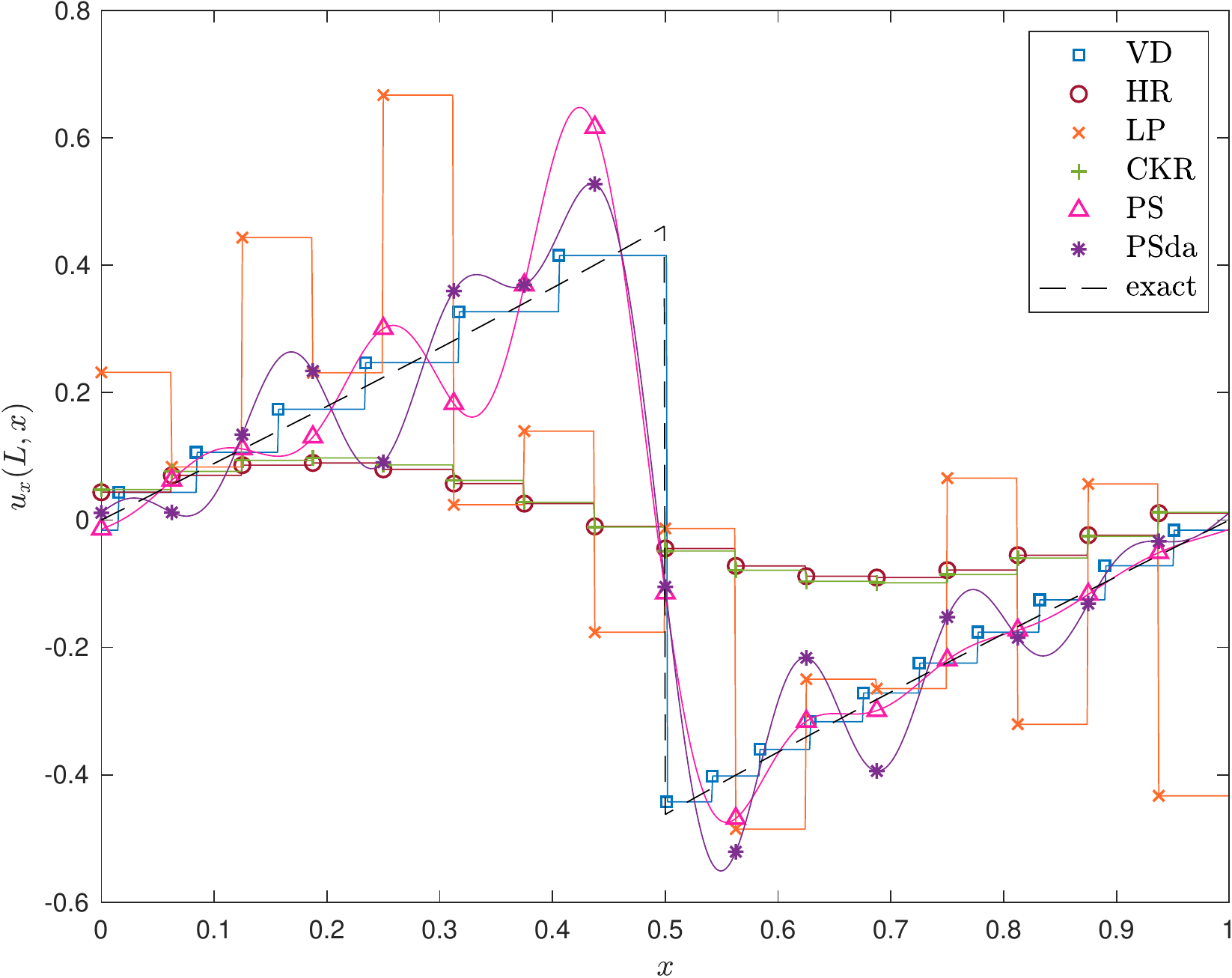}
  \end{subfigure}
  \caption{Periodic peakon. Plot of the interpolated solutions $u_n$ and $(u_n)_x$ at time $t = \period = 1$ for $n = 2^4$ and interpolated on a reference grid with $2^{10}$ grid points. The schemes considered are VD \eqref{eq:VD_H}, HR \eqref{eq:fdCH_HR}, LP \eqref{eq:fdCH_LP}, CKR \eqref{eq:fdCH_CKR}, while PS and PSda are \eqref{eq:PS} without and with dealiasing respectively.}
  \label{fig:per_peakCH_plot}
\end{figure}

Once more we observe that the dissipativity of the schemes \eqref{eq:fdCH_HR} and \eqref{eq:fdCH_LP} is quite
severe for this step size and they fail to capture the shape of the peakon.
The energy preserving difference scheme \eqref{eq:fdCH_LP} is closer to the
shape of the peakon, but exhibits oscillations which are particularly
prominent in the derivative.
On the other hand, the variational scheme manages to capture the shape of
the peakon very well, and manages far better than the other schemes to
capture the derivative of the reference solution after one period.
\begin{figure}
  \begin{subfigure}{0.495\textwidth}
    \includegraphics[width=1\textwidth]{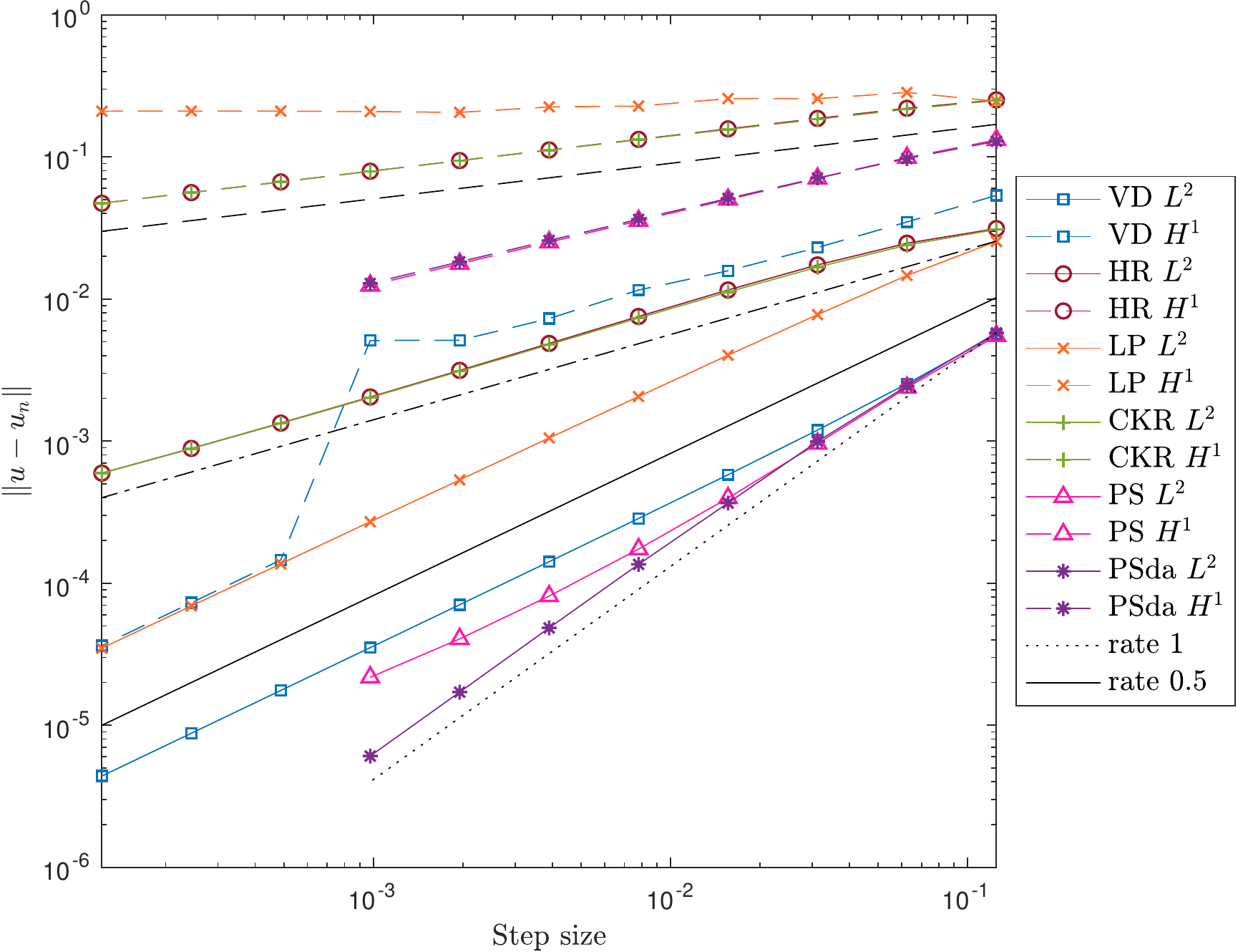}
    \caption{}\label{fig:per_peakCH_results_rates}
  \end{subfigure}
  \begin{subfigure}{0.495\textwidth}
    \includegraphics[width=1\textwidth]{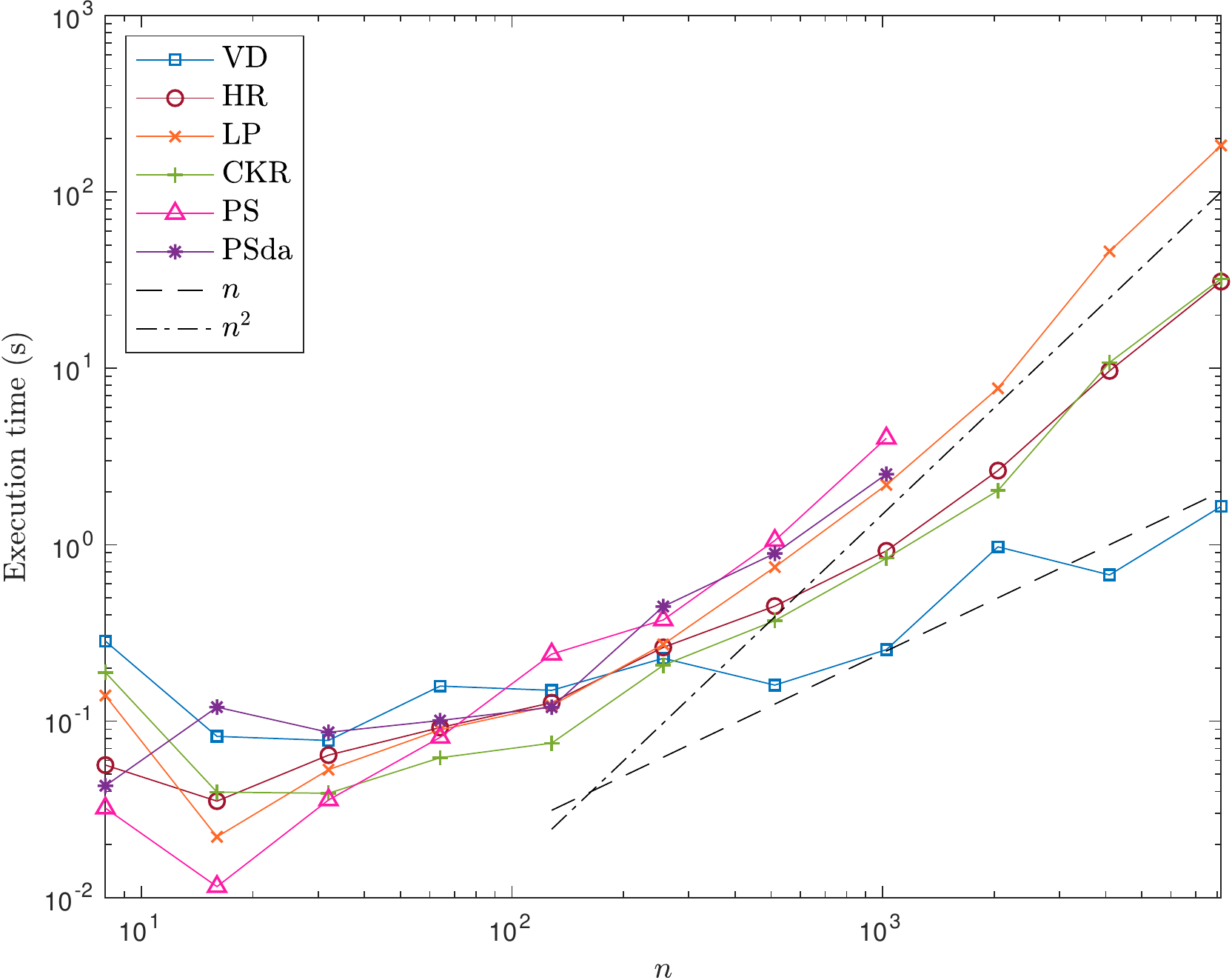}
    \caption{}\label{fig:per_peakCH_results_times}
  \end{subfigure}
  \caption{Periodic peakon. Errors in $\Ltwo$- and $\Hone$-norms after one period (\textsc{a}) and execution times for \texttt{ode113} in seconds (\textsc{b}). The schemes tested are VD \eqref{eq:VD_h}, HR \eqref{eq:fdCH_HR}, LP \eqref{eq:fdCH_LP} and CKR \eqref{eq:fdCH_CKR} for step sizes $2^{-k}$ where $3 \le k \le 13$, while \eqref{eq:PS} without and with dealiasing, PS and PSda, have $3 \le k \le 10$. All are evaluated on a reference grid with step size $2^{-15}$.}
  \label{fig:per_peakCH_results}
\end{figure}

The above observations are reflected in Figure \ref{fig:per_peakCH_results_rates}
which shows the rate of convergence.
The errors for the dissipative schemes decrease, which is expected since
both have been proven to converge in $\Hone$.
However, this convergence is quite slow, with approximate rates of 0.6 and
0.25 for the $\Ltwo$- and $\Hone$-norms respectively.

The energy-preserving difference scheme \eqref{eq:fdCH_LP} exhibits order 1 convergence in
$\Ltwo$-norm, but the oscillations in the derivative put an end to any hope of
$\Hone$-convergence.
Indeed, the $\Hone$-seminorm of the error is larger than 0.2 irrespective of the step
size.
The oscillations are of course even more severe for $m_i$, the
discrete version of $u-u_{xx}$ which is actually solved for in the ODE.

The variational scheme \eqref{eq:VD_H} performs quite well, with convergence
rate 1 in $\Ltwo$ and $\Hone$-rates generally between 0.45 and 1.
The exception is the transition from $n = 2^9$ to $n = 2^{10}$  where there
was barely any decrease in the error, followed by a large decrease
corresponding to a rate of 5 in $n = 2^{11}$, and from here the
$\Hone$-error is comparable in magnitude to the $\Ltwo$-error of
\eqref{eq:fdCH_LP}.
This jump is possibly connected to the discontinuity of the reference solution.

The pseudospectral scheme performs quite well for the $\Ltwo$-norm, and for larger $n$ it has the smallest error of all the
methods, with the dealiased scheme showing a better convergence rate which approaches 1.5.
However, in $\Hone$-norm the scheme performs worse than the variational scheme, owing to the major oscillations
close to the discontinuity in the reference solution.
Note that we have only run this scheme for $k \in \{3,\dots,10\}$, as opposed to $k \in \{3,\dots,13\}$ for
the other methods.
The reason for this is that for larger $n$ this scheme needs a finer reference grid to have consistent
convergence rates, as opposed to the other schemes, and in addition the run times are very long for larger $n$.

\begin{rem}\label{rem:cluster}
  Here we have only run the schemes over one period for the traveling wave, but an additional issue for the
  schemes in Lagrangian coordinates becomes apparent if they are run for a long time with initial data
  containing a derivative discontinuity, such as the traveling peakon.
  Then one typically observes a clustering of characteristics, or particles, at the front of the
  traveling discontinuity, leaving less particles to resolve the rest of the wave profile.
  Indeed, this is also reported in the numerical results of \cite{CamHuaLee2005} for their particle method,
  and the authors suggest that a redistribution algorithm may be applied when particles come too close.
  Such redistribution algorithms would be useful for \eqref{eq:cmp_per} and \eqref{eq:VD_H} when running them
  for long times, but development of such tools fall outside the scope of this paper.
  It is however important to be aware of this phenomenon, as the clustering can lead to artificial numerical
  collisions of the characteristics when they become too close for the computer to distinguish them.
  In worst case this can lead to a breakdown of the initial ordering of the characteristics $y_i$,
  which again ruins the structure of the ODE system, leading to wrong solutions or breakdown of the method.
\end{rem}

\subsection{Example 3: Peakon-antipeakon example}
In this example we consider the interval $[0,\period]$ and peakon-antipeakon initial datum
\begin{equation*}
u_0(x) = \frac{c}{\sinh\left(\frac{\period}{4}\right)} \begin{cases}
\sinh(x), & 0 \le x < \frac{\period}{4}, \\
\sinh\left(\frac{\period}{2} - x\right), & \frac{\period}{4} \le x < \frac{3\period}{4}, \\
\sinh(x - \period), & \frac{3\period}{4} \le x < \period,
\end{cases}
\end{equation*}
for $c = 1$ and $\period = 2\pi$.
We want to evaluate the numerical solutions at $t = 4.5$, which is approximately the time when the two peaks
have returned to their initial positions $x = \pi/2$ and $x = 3\pi/2$ after colliding once.
Since this is a multipeakon solution, we may use the conservative multipeakon scheme \eqref{eq:cmp_per} to
provide a reference solution.
Setting $n = 2$, choosing $y_1 = \pi/2$, $y_2 = 3\pi/2$, $u_1 = -u_2 = 1$, and computing $H_i$ for $i = 1,2$
according to \eqref{eq:delH}, we integrated in time using \texttt{ode113} with the very stringent tolerances
$\texttt{AbsTol} = \texttt{eps}$ and $\texttt{RelTol} = 100\:\texttt{eps}$.
For the schemes in the comparison we used the same solver with $\texttt{AbsTol} = \texttt{RelTol} = 10^{-9}$, and a reference
grid with $2^{16}$ equispaced points.

For this example we have omitted the dissipative schemes \eqref{eq:fdCH_HR} and \eqref{eq:fdCH_CKR}, as the former
cannot handle initial data of this type, while the latter would produce an approximation of the dissipative
solution which is identically zero after the collision.
Figure \ref{fig:per_peakCH_plot} shows $u_n$ for the variational scheme \eqref{eq:VD_H}, the finite
difference scheme \eqref{eq:fdCH_LP}, and the pseudospectral scheme \eqref{eq:PS} with and without dealiasing, all
for $n = 2^6$.
\begin{figure}
  \includegraphics[width=0.8\textwidth]{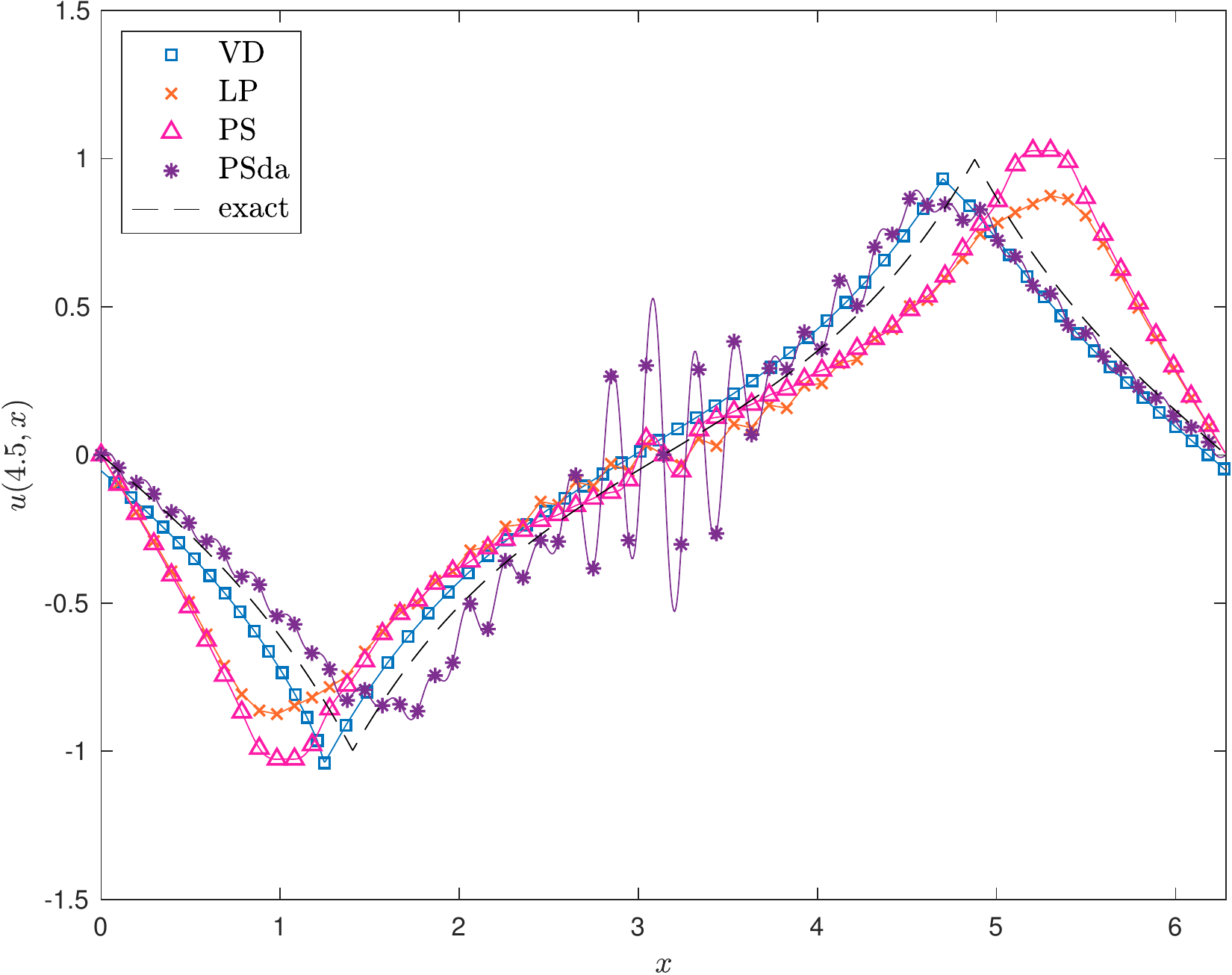}
  \caption{Peakon-antipeakon example. Interpolants $u_n$ for the schemes VD \eqref{eq:VD_H}, LP \eqref{eq:fdCH_LP}, PS and PSda which are respectively \eqref{eq:PS} without and with dealiasing. Here $n = 2^6$ with $2^{10}$ reference grid points, and the reference solution is computed using \eqref{eq:cmp_per}.}
  \label{fig:peak_antiCH_plot_N64}
\end{figure}

This is a numerical example which is especially ill-suited for the pseudospectral method \eqref{eq:PS}, and
illustrates the necessity of the dealiasing to have any form of convergence.
From Figure \ref{fig:peak_antiCH_plot_N64} one could get the impression that the dealiased scheme will perform
worse because of its large-amplitude oscillations near the point of collision.
A possible explanation for this is that the peakon-antipeakon interaction is very localized at collision time,
and in removing the high frequency components of the pseudospectral approximation one loses the only basis
functions which are able to resolve these localized details, and we are left with oscillations caused by the
remaining basis functions.
However, the amplitude of these spurious oscillations will decrease as $n$ increases, since we have more basis
functions with high frequencies.
On the other hand, the phase error in \eqref{eq:PS} without dealiasing does not decrease as $n$ increases.
Indeed, after collision this approximation always attains larger amplitudes than the reference solution, and thus
travels further, irrespective of $n$.
This becomes apparent in Figure \ref{fig:peak_antiCH_results_rates} where we see that there is no convergence at
all for the method without dealiasing.
\begin{figure}
  \begin{subfigure}{0.495\textwidth}
    \includegraphics[width=1\textwidth]{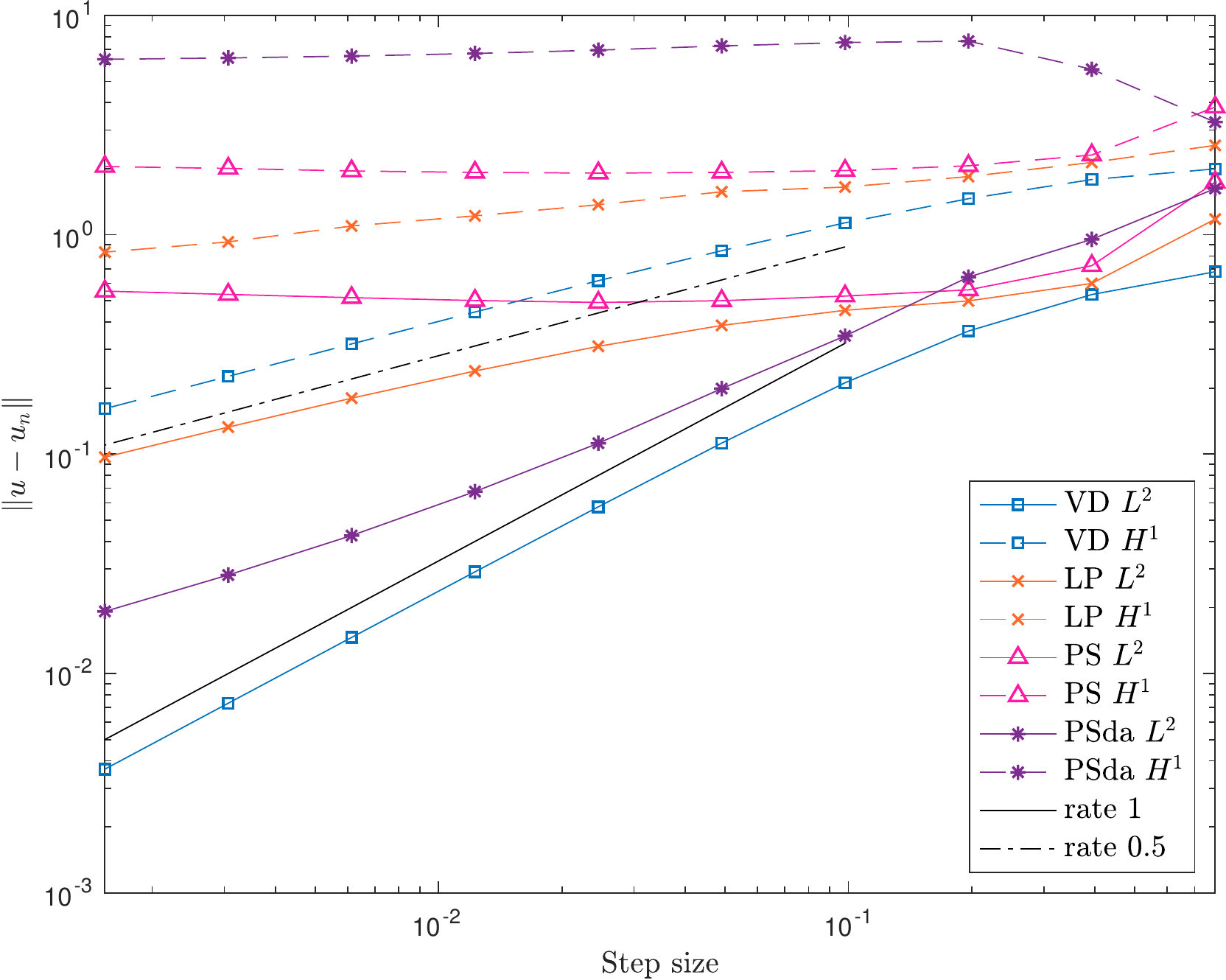}
    \caption{} \label{fig:peak_antiCH_results_rates}
  \end{subfigure}
  \begin{subfigure}{0.495\textwidth}
    \includegraphics[width=1\textwidth]{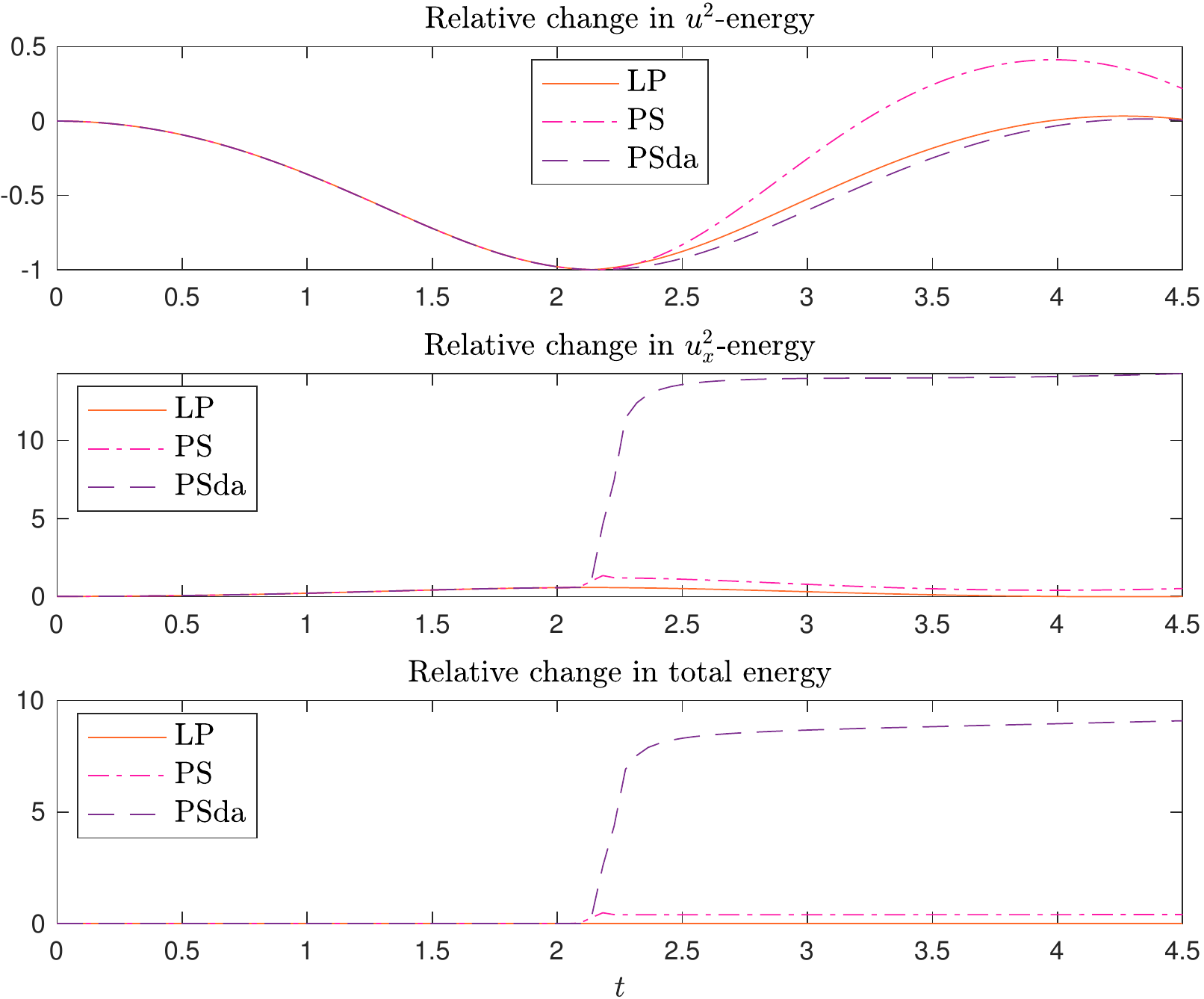}
    \caption{} \label{fig:peak_antiCH_results_energy}
  \end{subfigure}
  \caption{Peakon-antipeakon example. Errors in $\Ltwo$- and $\Hone$-norms (\textsc{a}) for the schemes displayed in Figure \ref{fig:peak_antiCH_plot_N64} with $n = 2^k$, $ 3 \le k \le 12$, and relative change in energies (\textsc{b}) for the schemes \eqref{eq:fdCH_LP} and \eqref{eq:PS} for $n = 2^{12}$.}
  \label{fig:peak_antiCH_results}
\end{figure}
To try and understand this behavior we turned to the energy of the solution, and even though there is no
defined discrete energy in the derivation of this method, based on the discrete energy in \eqref{eq:invariants}
we used the following expression as an indicator of the energy,
\begin{equation*}
\frac12 \dx \sum_{i = 0}^{n-1} \left( (u_n(x_i))^2 + ((u_n)_x(x_i))^2 \right).
\end{equation*}
In Figure \ref{fig:peak_antiCH_results_energy} we have plotted the relative change $(E(t) - E(0))/E(0)$ in the
discrete energies of \eqref{eq:fdCH_LP} and \eqref{eq:PS} over time for $n = 2^{12}$.
Note that we have also added plots of the relative change in the $u^2$- and $u^2_x$-parts of the energies.
From these plots we observe that all three schemes behave similarly until collision time, after which the total
energy of both version of the pseudospectral scheme increases, while the relative change in total
energy for \eqref{eq:fdCH_LP} remains of order $10^{-11}$.
We observe that the $u_x^2$-energy, and thus also the total energy, increases far more for the dealiased scheme,
which is probably caused by the oscillations near the collision point discussed earlier.
On the other hand, when considering the $u^2$-energy, the scheme without dealiasing has a larger increase than
the dealiased one which has a $u^2$-energy closer to that of the finite difference scheme \eqref{eq:fdCH_LP}.
This suggests that unless one applies dealiasing to the pseudospectral scheme \eqref{eq:PS},
the peakon-antipeakon collision introduces an artificial increase in energy which ruins $\Ltwo$-convergence
through a phase error, see Figure \ref{fig:peak_antiCH_results_rates}.
For the $\Hone$-norm one cannot expect convergence from any version of \eqref{eq:PS}, as the collision introduces severe
oscillations in the pseudospectral derivative.

Oscillatory behavior also explains the very slow decrease in $\Hone$-error for the invariant-preserving
difference scheme \eqref{eq:fdCH_LP}, displaying a rate fluctuating around 0.15.
For the $\Ltwo$-norm it exhibits a rate which approaches 0.5.

Meanwhile, the variational scheme \eqref{eq:VD_H} performs rather well for this example, having the smallest errors
in both $\Ltwo$- and $\Hone$-norm, and displaying consistent rates of respectively 1 and 0.5.

\subsection{Example 4: Collision-time initial datum}
An interesting feature discussed in \cite[Section 5.2]{vardisc} is that the variational discretization allows for
irregular initial data.
That is, pairs $(u, \mu)$, where $\mu$ may be a positive finite Radon measure, provide a complete description of the initial
data and the corresponding solution of \eqref{eq:CH} in Eulerian coordinates.
In particular, for the absolutely continuous part of $\mu$ one has
\begin{equation*}
\mu_\text{ac}((-\infty,x)) = \frac12 \int_{-\infty}^{x}\left(u^2(x') + u_x^2(x') \right) dx',
\end{equation*}
and the cumulative energy $\mu((-\infty,x))$ can be a step function, which is connected to the well-studied peakon-antipeakon
dynamics.

For example, at collision time, $u$ may be identically zero and all energy is concentrated in the point of collision as a
delta distribution, meaning the cumulative energy will be a step function centered at the collision.
To be able to accurately represent the solution between the two peakons emerging from a collision, we have to
``pack'' sufficiently many characteristics into the collision point.
To this end we introduce the initial characteristics $y_0(\xi) = y(0,\xi)$ and the initial cumulative energy
$H_0(\xi) = H(0,\xi)$ similar to \cite[Eq.\ (3.20)]{HolRay2008},
\begin{equation}\label{eq:yH_para}
y_0(\xi) \coloneqq \sup\left\{ y \:|\: y + F_\mu(y) < (1 + E/\period)\xi \right\}, \quad
H_0(\xi) = (1 + E/\period)\xi - y_0(\xi),
\end{equation}
where $F_\mu(x) = \mu([0,x))$ for $x \in [0,\period]$ and $E = \mu([0,\period))$ is the total energy of the system. 

This feature inspired the following variation of peakon-antipeakon initial data,
where we initially have a system with period $\period = 8$ and total energy $E = 6$ equally concentrated in the points $x = 2$
and $x = 6$ on the interval $[0,8]$.
In Eulerian variables this reads
\begin{equation*}
u_0(x) \equiv 0, \qquad \mu_0([0,x)) = \begin{cases}
0, & 0 \le x \le 2, \\ 3, & 2 < x \le 6, \\ 6, & 6 < x \le 8.
\end{cases}
\end{equation*}
On the other hand, for the Lagrangian description we use \eqref{eq:yH_para} to compute
\begin{equation*}
y_0(\xi) = \begin{cases}
\frac74 \xi, & 0 \le \xi < \frac87, \\
2, & \frac87 \le \xi < \frac{20}{7}, \\
\frac74 \xi - 3, & \frac{20}{7} \le \xi < \frac{36}{7}, \\
6, & \frac{36}{7} \le \xi < \frac{48}{7}, \\
\frac74 \xi - 6, & \frac{48}{7} \le \xi < 8,
\end{cases} \quad
H_0(\xi) = \begin{cases}
0, & 0 \le \xi < \frac87, \\
\frac74 \xi - 2, & \frac87 \le \xi < \frac{20}{7}, \\
3, & \frac{20}{7} \le \xi < \frac{36}{7}, \\
\frac74 \xi - 6, & \frac{36}{7} \le \xi < \frac{48}{7}, \\
6, & \frac{48}{7} \le \xi < 8,
\end{cases}
\end{equation*}
together with $U_0(\xi) \equiv 0$.
From this we define the discrete initial data for \eqref{eq:VD_H} by $(y_0)_i = y_0(\xi_i)$,
$(H_0)_i = H_0(\xi_i)$, and $(U_0)_i = 0$ in the grid points $\xi_i = i 2^{-k} \period$ for
$k \in \{3,\dots,14\}$.
Using \texttt{ode45} with $\texttt{AbsTol} = \texttt{RelTol} = 10^{-8}$ we integrate from $t = 0$ to $t = 4$.

As the conservative multipeakon method describes exactly the interaction of peakons,
we may once more use it as reference solution.
Setting $n = 4$, $\period = 8$ in \eqref{eq:cmp_per} we define initial data
\begin{equation*}
\mathbf{y}_0 = \begin{bmatrix} 2 & 2 & 6 & 6 \end{bmatrix}^\tp, \quad
\mathbf{U}_0 = \begin{bmatrix} 0 & 0 & 0 & 0 \end{bmatrix}^\tp, \quad
\mathbf{H}_0 = \begin{bmatrix} 0 & 6 & 6 & 12 \end{bmatrix}^\tp
\end{equation*}
corresponding to two pairs of peakons respectively placed at $x = 2$ and $x = 6$ with energy $6$ contained between
the peakons in each pair.
Note that the energy is double that of the energy prescribed for the variational scheme \eqref{eq:VD_H},
since the factor $\frac12$ is not present in the definition of the energy for the multipeakon scheme.
This was then integrated using \texttt{ode45} with the same tolerances as for the reference solution in the previous example,
$\texttt{AbsTol} = \texttt{eps}$ and $\texttt{RelTol} = 100\: \texttt{eps}$.

Then we measured the errors using \eqref{eq:H1_approx} on the reference grid $x_i = 2^{-16} \period$, and the
results are displayed in Figure \ref{fig:step_initCH_rates}.
We found the decrease in error to be remarkably consistent, rate 1 in the $\Ltwo$-norm and approximately 0.5 for
the $\Hone$-norm, and this is true for both the time $t = 2$ before the collision and time $t = 4$
after the collision.
Figure \ref{fig:step_initCH_chars} displays the characteristics for the solution with $n = 2^6$ together with the four
trajectories of the peaks of the exact solution. Observe how the characteristics, initially clustered
at the collision points $x = 2$ and $x = 6$ in accordance with \eqref{eq:yH_para}, spread out between the pairs of peaks in the reference solution.

In Figure \ref{fig:step_initCH_u} we have plotted the solution for $n = 2^6$ and interpolated on a reference grid with
$2^{10}$ grid points. We observe that the interpolants match the shape of the exact solution quite well, even for the derivative, and have only a slight phase error. 

\begin{figure}
  \begin{subfigure}{0.495\textwidth}
    \includegraphics[width=1\textwidth]{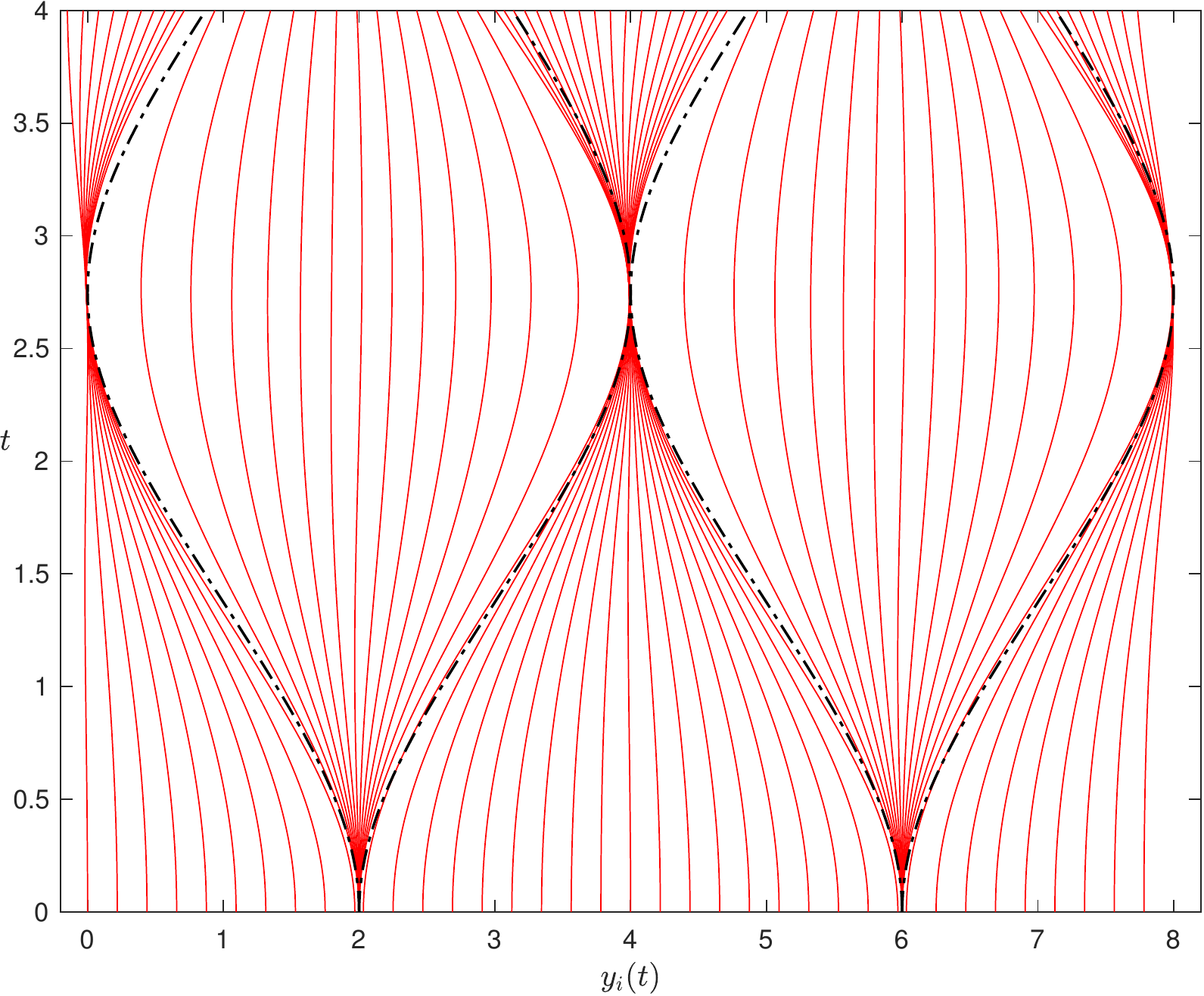}
    \caption{}\label{fig:step_initCH_chars}
  \end{subfigure}
  \begin{subfigure}{0.495\textwidth}
    \includegraphics[width=1\textwidth]{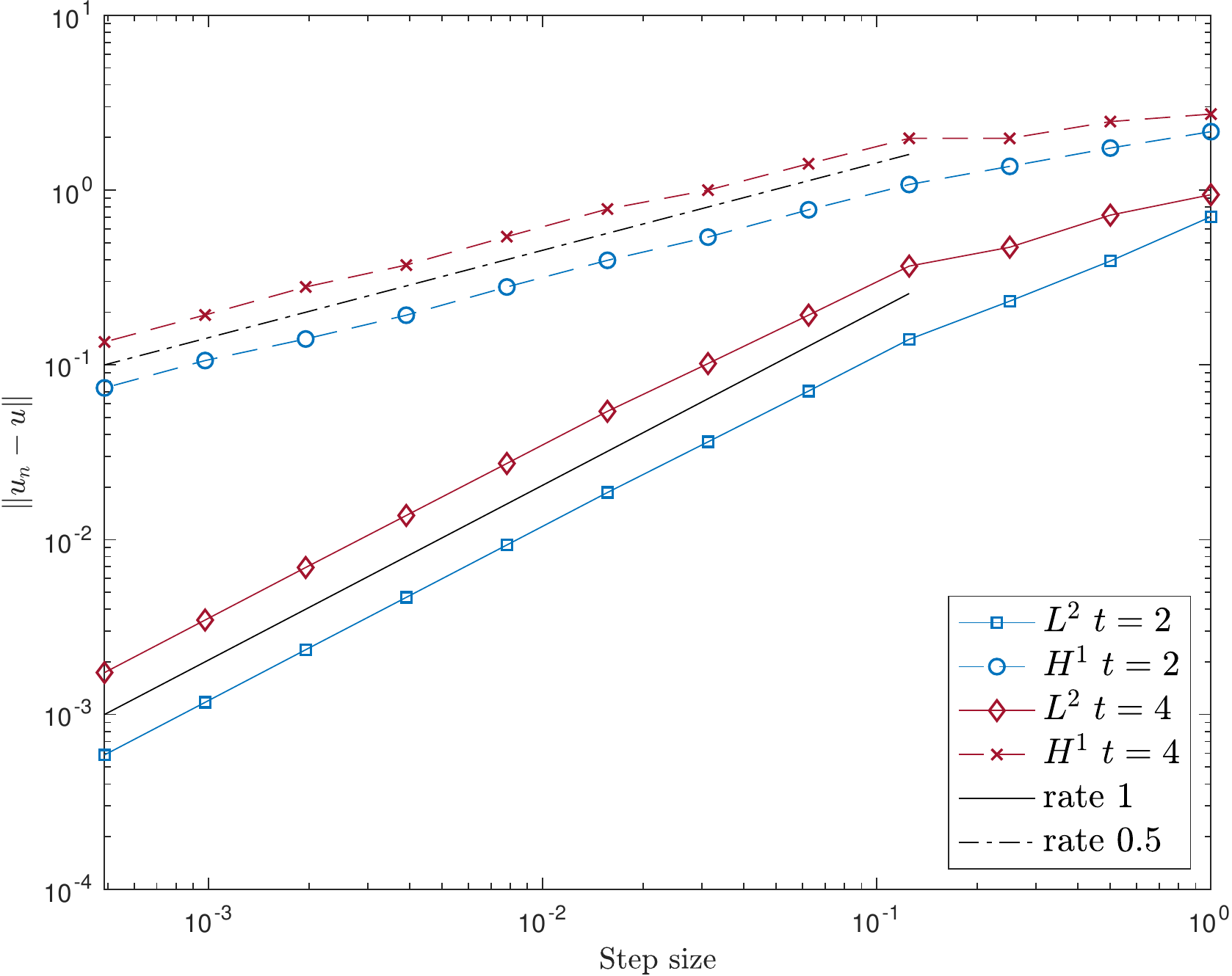}
    \caption{}\label{fig:step_initCH_rates}
  \end{subfigure}
  \caption{Collision-time initial datum. (\textsc{a}): The $n = 2^6$ characteristics for the variational scheme (solid red) and for the four reference peakons (dash-dotted black). (\textsc{b}): Error rates of the variational scheme at times $t = 2$ and $t = 4$ for $n = 2^k$ and $3 \le k \le 14$ evaluated on a $2^{16}$ point reference grid. }
\end{figure}

\begin{figure}
  \begin{subfigure}{0.495\textwidth}
    \includegraphics[width=\textwidth]{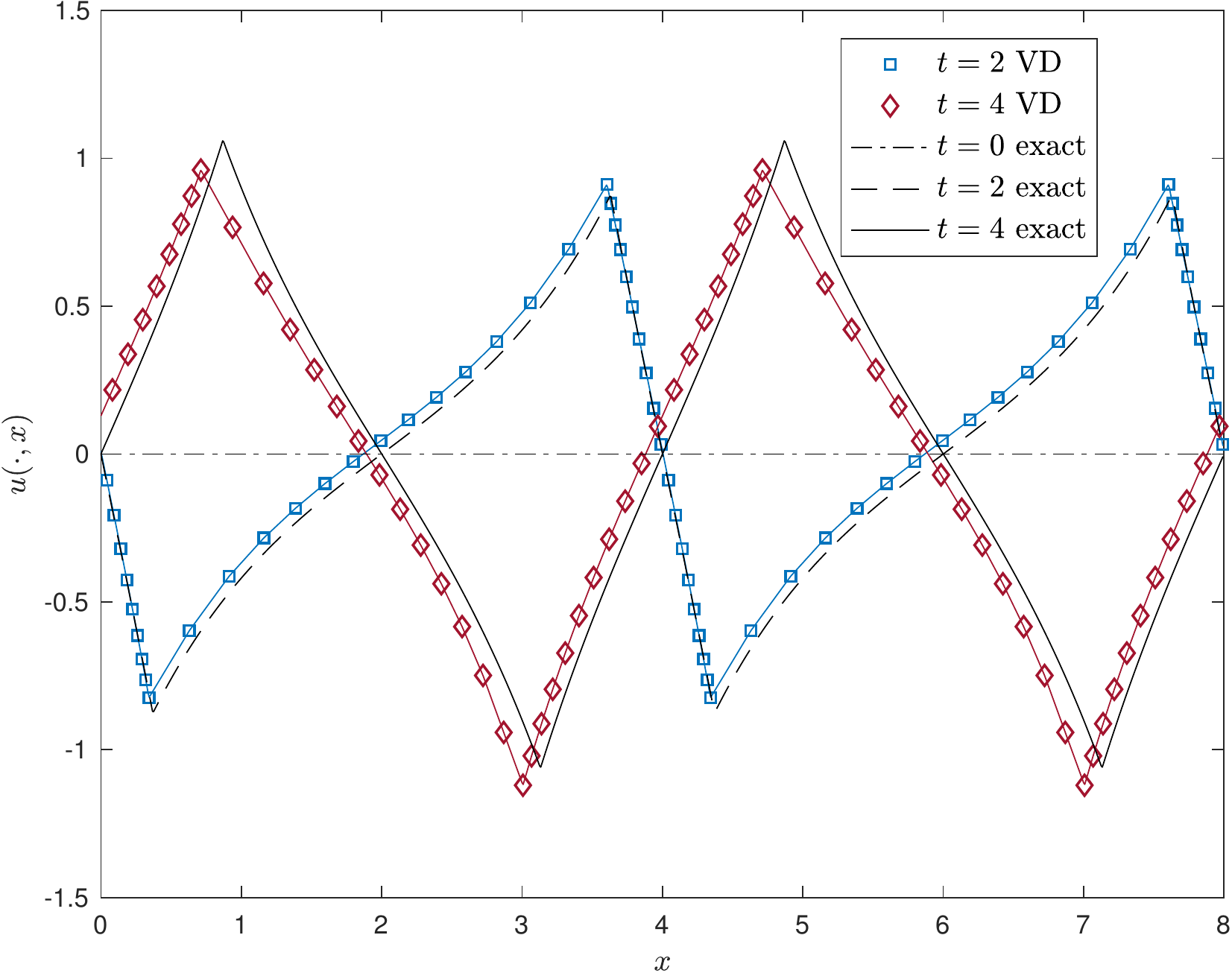}
  \end{subfigure}
  \begin{subfigure}{0.495\textwidth}
    \includegraphics[width=\textwidth]{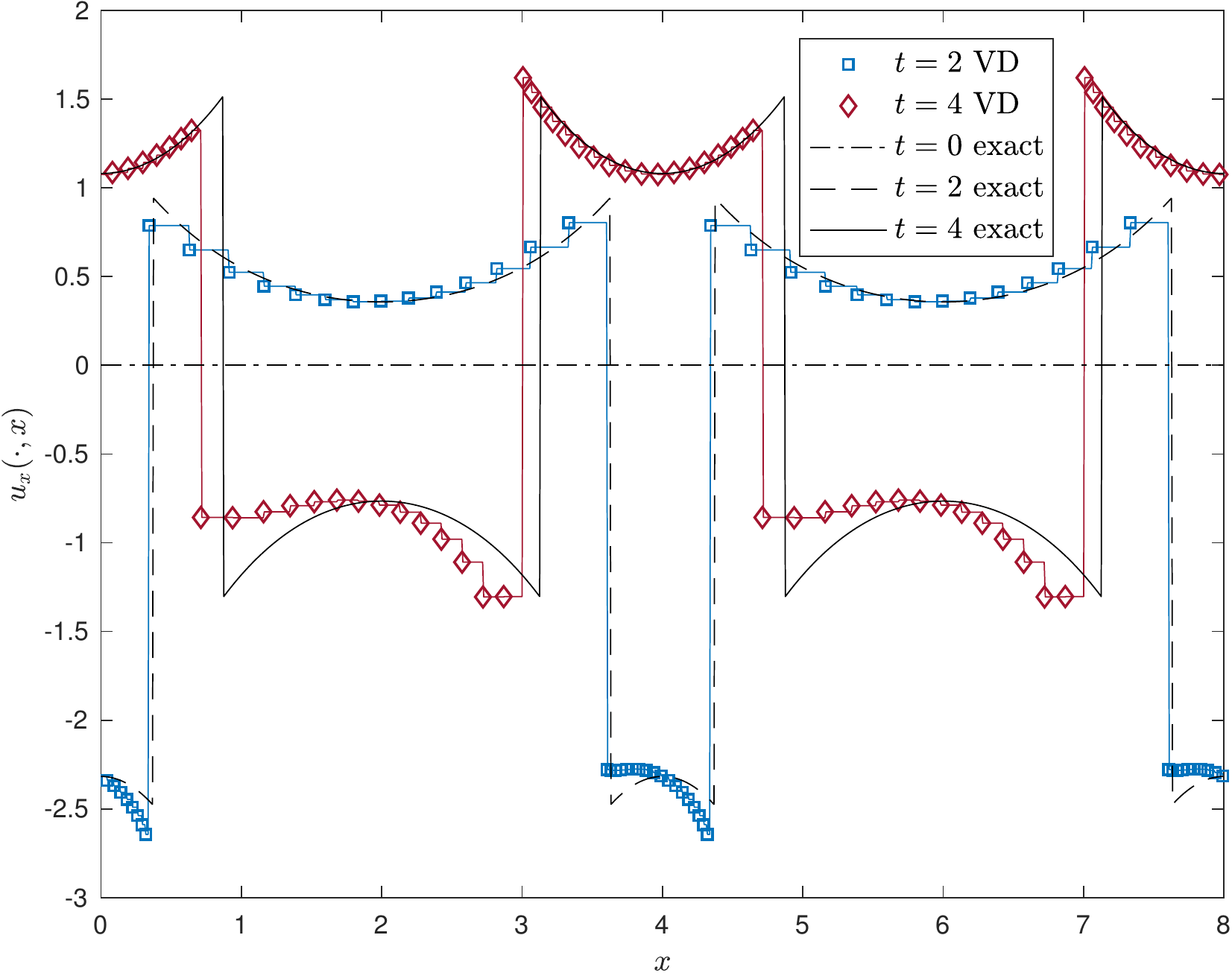}
  \end{subfigure}
  \caption{Plot of the interpolated solutions $u_n$ and $(u_n)_x$ for collision-time initial datum at
    times $t = 2$ and $t = 4$ with  $n = 2^6$ interpolated on a reference grid with $2^{10}$ grid points. }
  \label{fig:step_initCH_u}
\end{figure}

\subsection{Example 5: Sine initial datum for the CH equation}
In the following example we will qualitatively compare how the variational scheme \eqref{eq:VD_h} and the
conservative multipeakon scheme \eqref{eq:cmp_per} handle smooth initial data which leads to wave breaking.
We have chosen to consider $u_0(x) = \sin(x)$ for $x \in [0,2\pi]$, since this is a simple, smooth periodic
function which leads to singularity formation.
Furthermore, it is antisymmetric around the point $x = \pi$, which will highlight another difference between the
methods.
Since we do not have a reference solution in this case, the comparison will be of a more qualitative nature than
in the preceding examples.

As usual we chose $y_i(0) = \xi_i = i 2 \pi/n$ and $U_i(0) = u_0(\xi_i)$ for both schemes, and computed their
corresponding initial cumulative energies $H_i(0)$ in their own respective ways.
Then we have integrated from $t = 0$ to $t = 6\pi$ using \texttt{ode45} with
$\texttt{AbsTol} = \texttt{RelTol} = 10^{-10}$, and evaluated the interpolated functions on a finer grid with step
size $\dx = 2^{-10}\period$.

\begin{figure}
  \begin{subfigure}{0.495\textwidth}
    \includegraphics[width=1\textwidth]{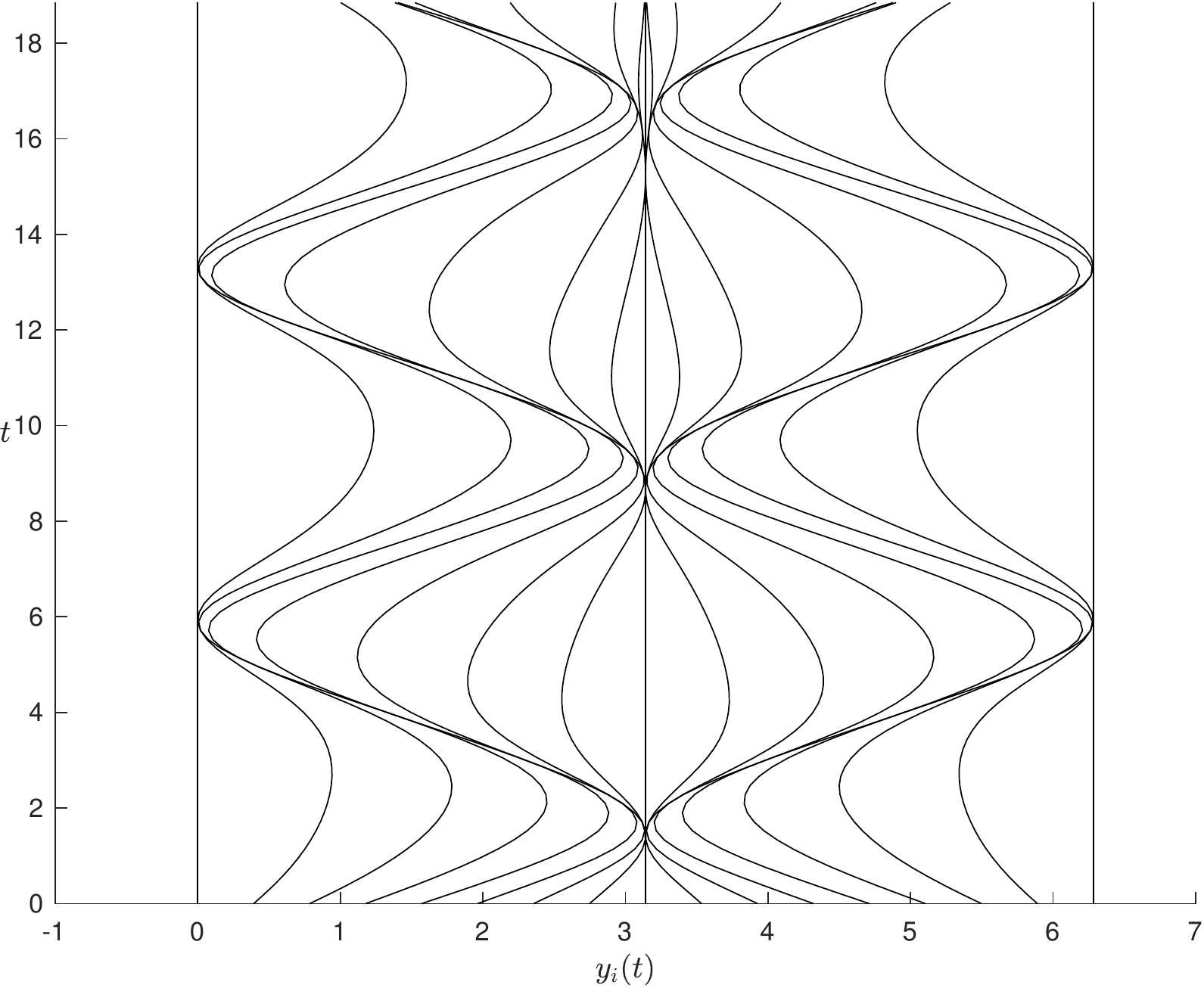}
    \subcaption{} \label{fig:sineCH_CMP16}
  \end{subfigure}
  \begin{subfigure}{0.495\textwidth}
    \includegraphics[width=1\textwidth]{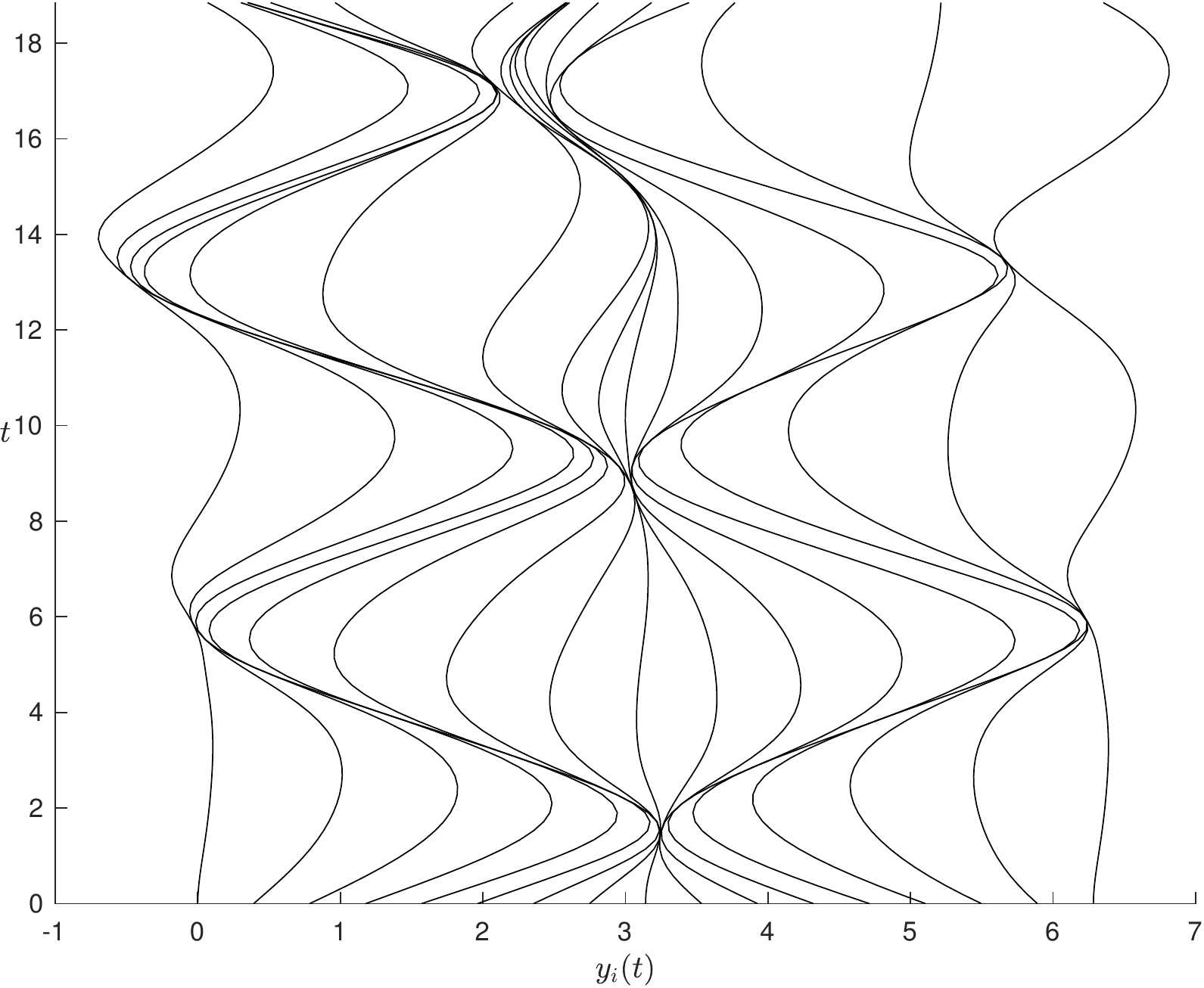}
    \subcaption{} \label{fig:sineCH_VD16}
  \end{subfigure}
  \begin{subfigure}{0.495\textwidth}
    \includegraphics[width=1\textwidth]{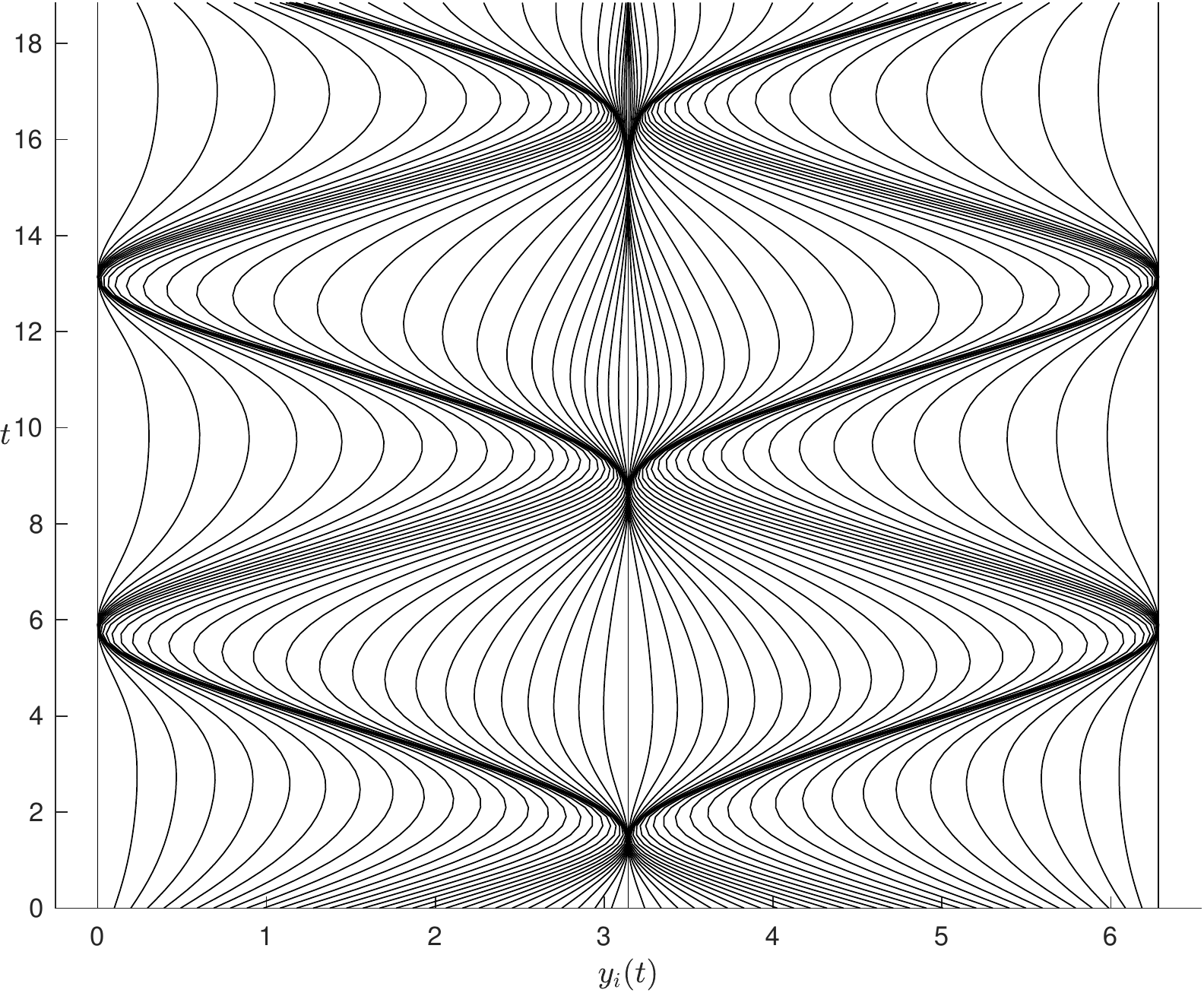}
    \subcaption{} \label{fig:sineCH_CMP64}
  \end{subfigure}
  \begin{subfigure}{0.495\textwidth}
    \includegraphics[width=1\textwidth]{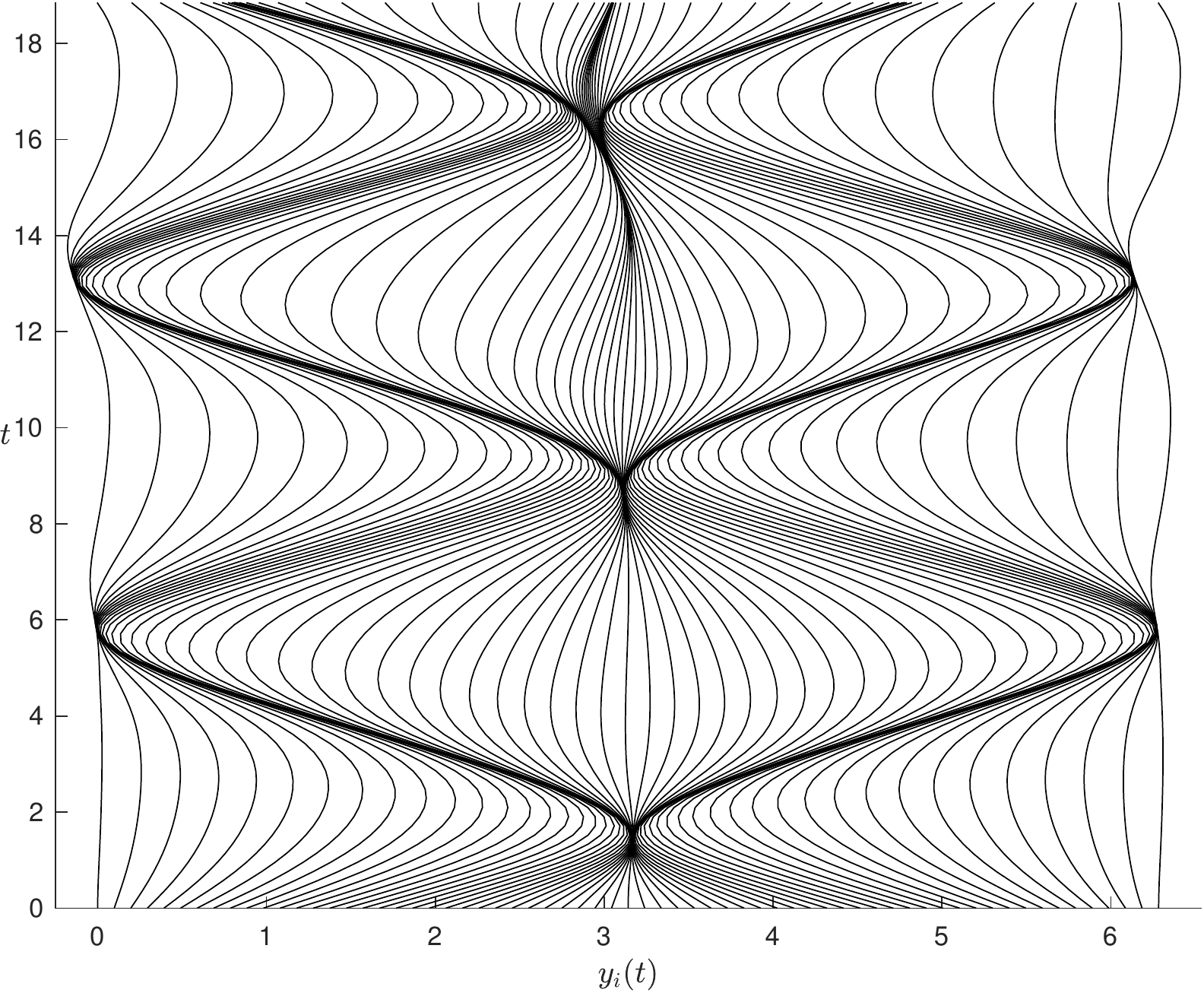}
    \subcaption{} \label{fig:sineCH_VD64}
  \end{subfigure}
  \caption{The characteristics for $u_0(x) = \sin(x)$ for the conservative multipeakon scheme with $n = 2^4$ (\textsc{a}) and $n = 2^6$ (\textsc{c}), and for the variational scheme for $n = 2^4$ (\textsc{b}) and $n = 2^6$ (\textsc{d}). Note that we have also plotted $y_n(t) = y_0(t) + \period$ to highlight the periodicity. }
  \label{fig:sineCH_chars}
\end{figure}

A striking difference in the methods is immediately seen from their characteristics for $n = 2^4$ and $n = 2^6$
displayed in Figure \ref{fig:sineCH_chars}.
Indeed, we find that the multipeakon method preserves the
symmetry of the characteristics, and this we have observed for all values of $n$ that we tested for whenever
$y_i(0)$ were equally spaced for $i \in \{0,\dots,n-1\}$.
In particular, the characteristics starting at $\xi_0 = 0$ and $\xi_{n/2} = \pi$ remain in the same position for
all $t$.
Indeed, this is a consequence of the fact that antisymmetry is preserved by \eqref{eq:CH}, cf.\ \cite{Camassa1993},
\cite[Rem.\ 4.2]{constantin1998global}.
On the other hand, for the variational scheme the characteristics have a slight drift which becomes more
pronounced over time.
This is especially clear for small $n$; compare for instance Figures \ref{fig:sineCH_VD16} and \ref{fig:sineCH_VD64}
where in the former the drift is apparent for $y_{n/2}$ from the start, while in the latter the characteristics
look more similar to those of Figure \ref{fig:sineCH_CMP64} for a longer time.
This introduces a small phase error at $t = 6\pi$ for $n = 2^6$. The clustered characteristics indicating the peaks
in Figure \ref{fig:sineCH_VD64} lie slightly to the left for the corresponding peaks in Figure
\ref{fig:sineCH_CMP64}.

To visualize the solution corresponding to Figure \ref{fig:sineCH_VD64}, we have plotted the interpolant $u_n$
at 30 equally spaced times in Figure \ref{fig:sineCH_VD}.
Here we see how the initial smooth profile breaks after about two seconds, leading to two peaked waves traveling in
opposite directions until they collide and reflect at the boundary.
Comparing with Figure \ref{fig:sineCH_VD64} we see how the emerging peaks and antipeaks
correspond to clustering of characteristics with very high density, while the locations
of smoother ridges and troughs coincide with less dense clusters.

\begin{figure}
  \includegraphics[width=0.8\textwidth]{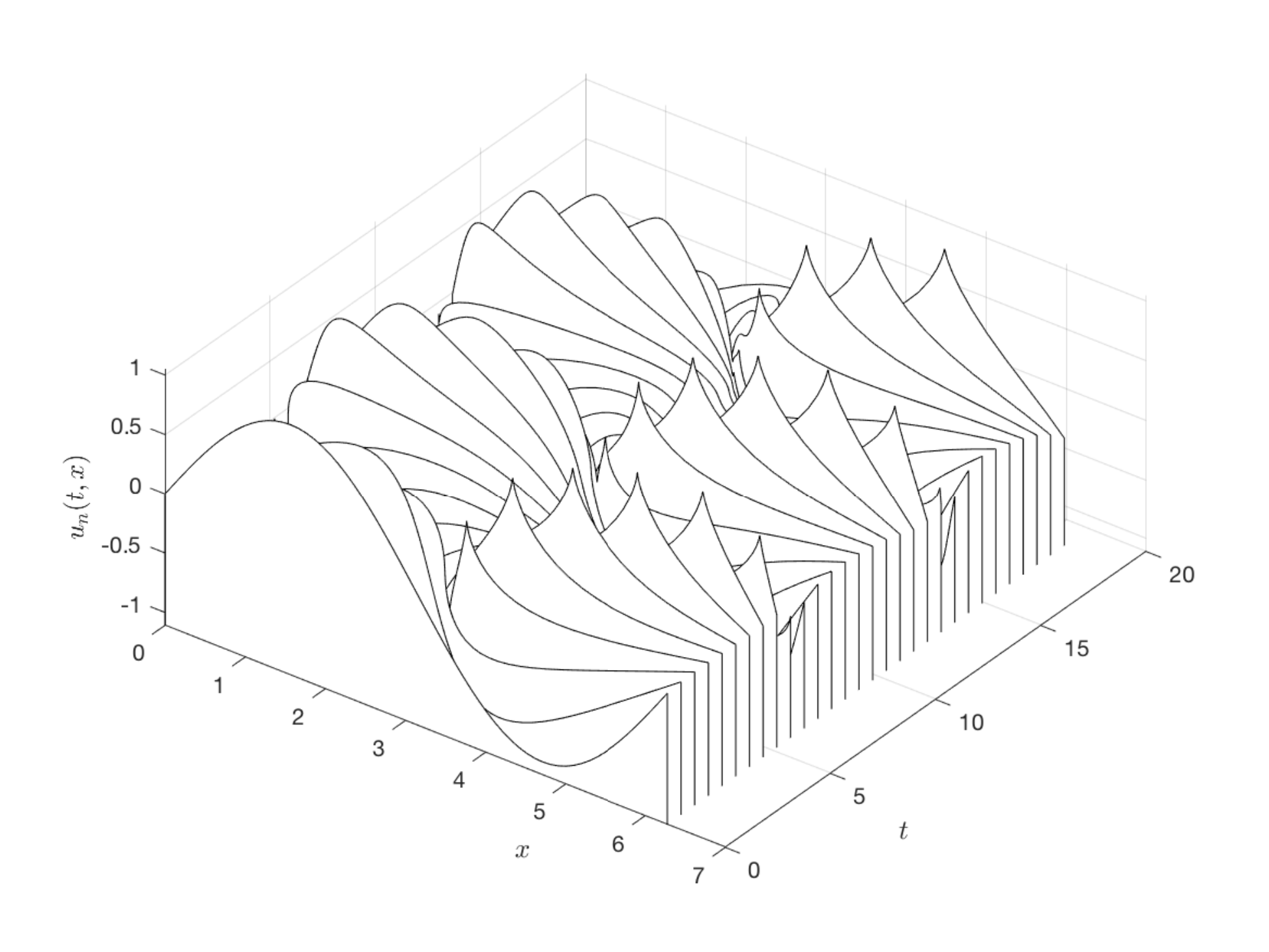}
  \caption{The interpolant $u_n$ for $u_0(x) = \sin(x)$ with $n = 2^6$ and 30 equally spaced times between $t = 0$ and $t = 6\pi$.}
  \label{fig:sineCH_VD}
\end{figure}

\begin{rem}\label{rem:symm}
  A natural question arising from this example is whether one could have chosen a different discrete energy as a
  starting point for the variational discretization in order to obtain a scheme which respects the preservation of
  antisymmetry.
  One could for instance try to use symmetric differences such as the central difference from \eqref{eq:diff}.
  However, there is the potential drawback of the oscillatory solutions associated with noncompact
  difference operators, cf.\ the discussion in \cite[p.\ 1929]{CohRay2011diff}.
  In fact, an early prototype of the scheme \eqref{eq:VD_H}, comprising only of \eqref{eq:semidisc_sys} solved
  as an ODE system with solution dependent mass matrix, exhibited severe oscillations in front of the peak when
  applied to the periodic peakon example after replacing $\D_+$ by $\D_0$.
  This indicates that some care has to be exercised when choosing the defining energy.
\end{rem}

\subsection{Example 6: Sine initial datum for the 2CH system}
Our final example illustrates how the solution of the previous example changes when we instead solve the
two-component Camassa--Holm system initial value problem \eqref{eq:2CH_IVP} by adding a positive density $\rho_0$ initially.
In this case we cannot compare with the multipeakon method \eqref{eq:cmp_per}, since multipeakons will not be
a solution of the 2CH system.
Moreover, this time we will use \eqref{eq:VD_h} instead of \eqref{eq:VD_H} to illustrate how the total energy is
preserved when $h_i$ rather than $H_i$ is used to track the energy of the system.

Thus, we choose $u_0(x) = \sin(x)$ and $\rho_0(x) \equiv 2$, and define initial data for \eqref{eq:VD_h}
through $y_i(0) = \xi_i = i 2\pi/n$, $U_i(0) = u_0(\xi_i)$, $r_i(0) \equiv 2$, and
$2h_i(0) = (U_i(0))^2 + (\D_+U_i(0))^2 + (r_i(0))^2$ for $i \in \{0, \dots, n-1\}$ and $n = 2^6$.
Once more we integrate from $t = 0$ to $t = 6\pi$ with \texttt{ode45} and
$\texttt{AbsTol} = \texttt{RelTol} = 10^{-10}$. The results are displayed in Figure \ref{fig:sine2CH_VD}.

\begin{figure}
  \begin{subfigure}[b]{0.4\textwidth}
    \includegraphics[width=1\textwidth]{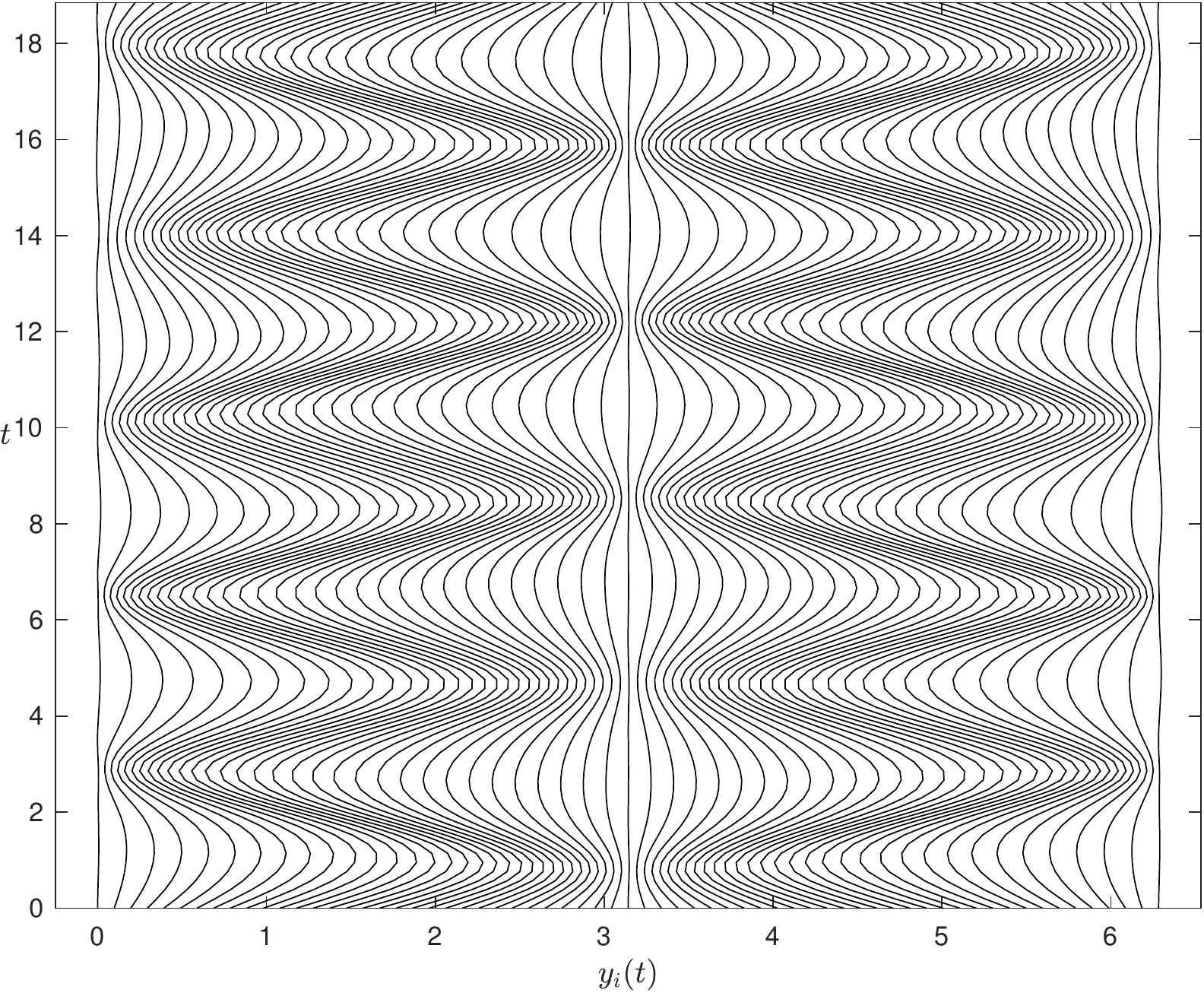}
    \subcaption{} \label{fig:sine2CH_chars}
  \end{subfigure}
  \begin{subfigure}[b]{0.59\textwidth}
    \includegraphics[width=1\textwidth]{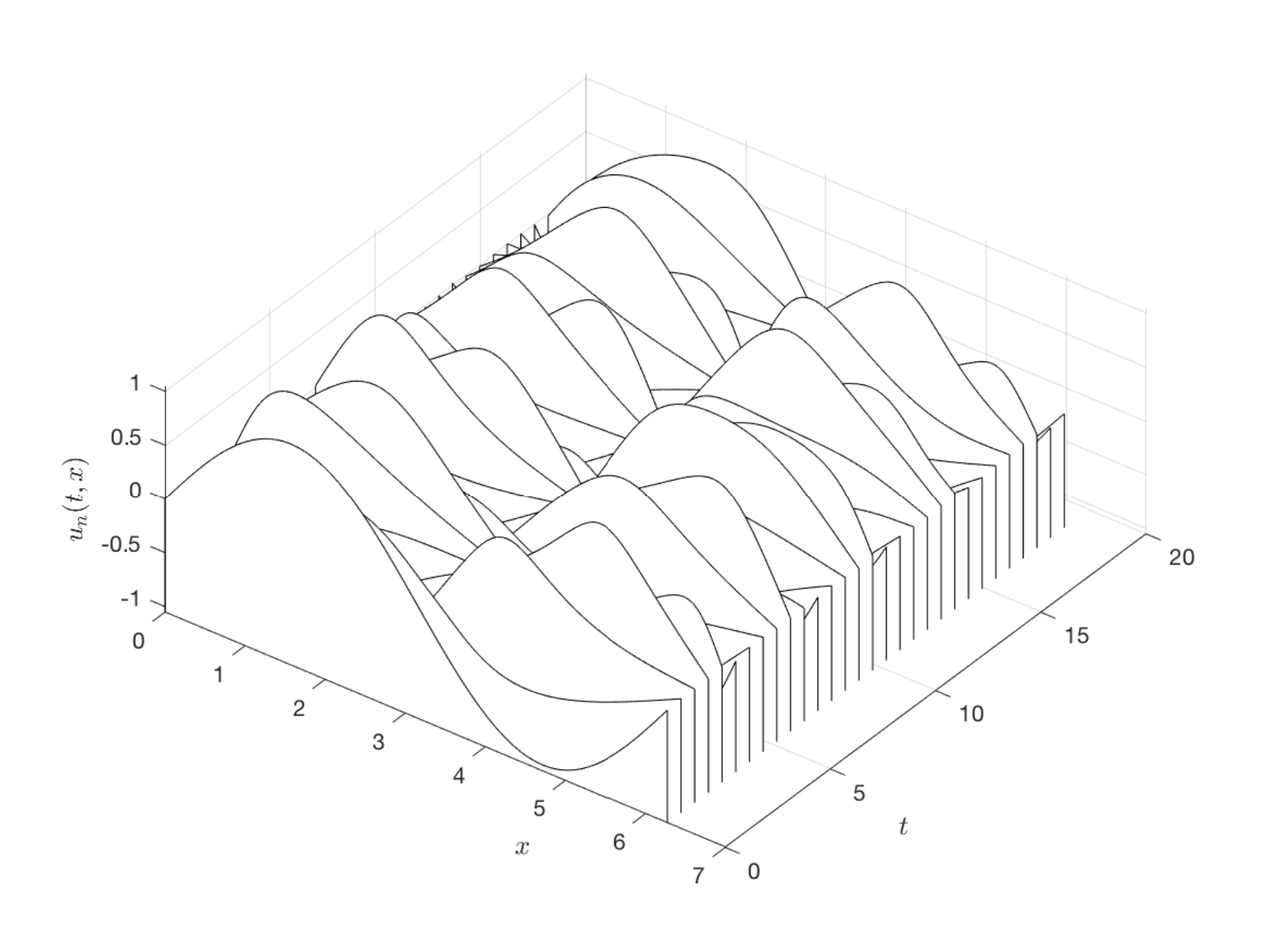}
    \subcaption{} \label{fig:sine2CH_VD_wf}
  \end{subfigure}
  \begin{subfigure}[b]{0.4\textwidth}
    \includegraphics[width=1\textwidth]{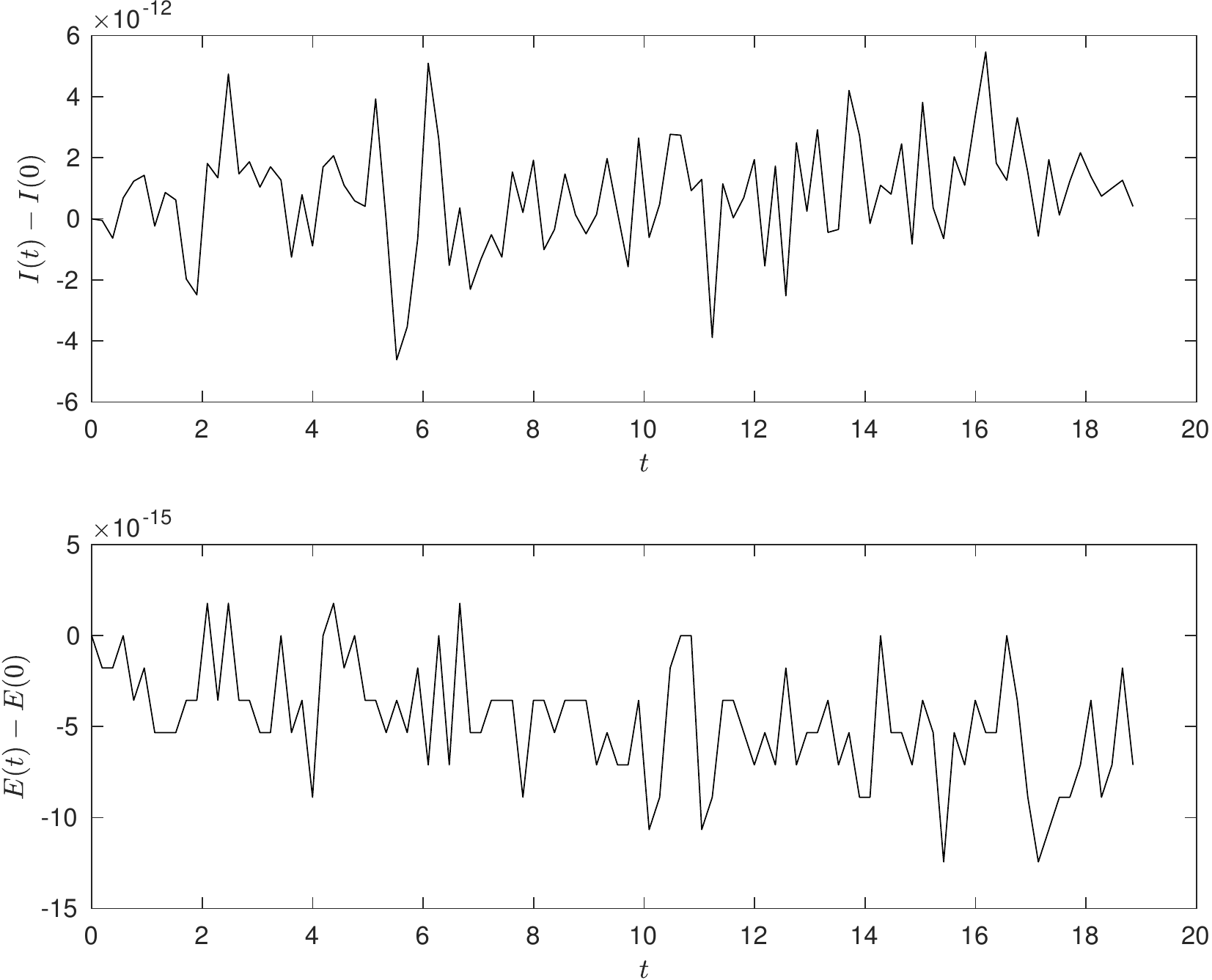}
    \subcaption{} \label{fig:sine2CH_VD_dev}
  \end{subfigure}
  \begin{subfigure}[b]{0.59\textwidth}
    \includegraphics[width=1\textwidth]{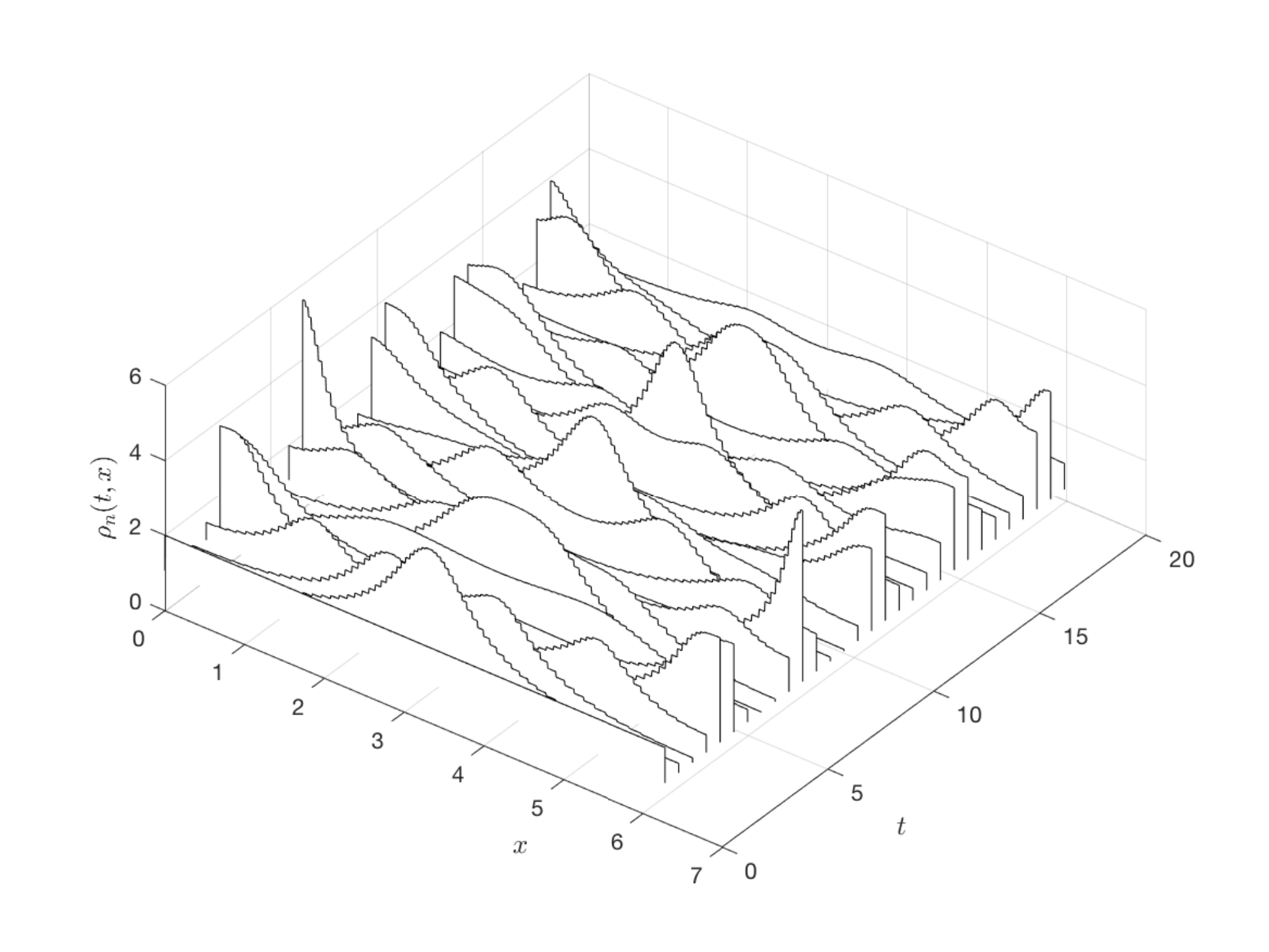}
    \subcaption{} \label{fig:sine2CH_VD_rho}
  \end{subfigure}
  \caption{Initial data $u_0(x) = \sin(x)$ and $\rho_0(x) \equiv 2$ for variational scheme with $n = 2^6$. Characteristics (\textsc{a}) and deviations $I(t) - I(0)$ and $E(t) - E(0)$ in momentum and energy respectively (\textsc{c}) for 100 equally spaced times between $t = 0$ and $t = 6\pi$. Parts (\textsc{b}) and (\textsc{d}) show the interpolants $u_n$ and $\rho_n$ respectively for 30 equally spaced times.}
  \label{fig:sine2CH_VD}
\end{figure}

Figure \ref{fig:sine2CH_chars} shows the characteristics which appear to have less drift in this case compared to
Figure \ref{fig:sineCH_VD64}.
Moreover, as expected from theory we see that there is no collision of characteristics, there is always a
positive distance between them.
The lack of singularity formation expressed in the plot of the interpolant $u_n$ in Figure \ref{fig:sine2CH_VD_wf}, where the wave profile remains smooth. There are no sharp peaks or antipeaks, only ridges
and troughs where the characteristics are dense.
In Figure \ref{fig:sine2CH_VD_rho} we see how the energy is transferred to the density $\rho_n$ when the crests and
troughs of $u_n$ meet, as opposed to the CH equation in Figure \ref{fig:sineCH_VD} where the energy is
concentrated in the point of collision.

Finally, Figure	\ref{fig:sine2CH_VD_dev} shows the deviation in momentum $I(t) - I(0)$ and in energy
$E(t) - E(0)$, where $I(t)$ is defined in \eqref{eq:disc_mom} and $E(t) = H_n(t)$ as defined by the sum in
\eqref{eq:H}.
In the other examples, $H_n$ was one of the solution variables for the variational scheme, just as for
the multipeakon scheme.
For both schemes, the corresponding evolution equation is $\dot{H}_n = 0$ and the energy is conserved by default.
Here the energy is a sum scaled by $\dxi$, hence it is a linear invariant of the ODE system.
Since linear invariants are preserved by any Runge--Kutta method, cf.\ \cite[Thm.~1.5]{GeoNumInt}, we expect
this deviation to be of the order of rounding error in our example.
Indeed, this is what we observe: for any tolerance we set for \texttt{ode45} we found the energy deviation to be of
order $10^{-15}$.
On the other hand, this is not the case for the total momentum $I(t)$, which is a sum of products of $U_i$ and
$\D_+y_i$ and thus a quadratic invariant.
In our results, the momentum deviation always scales with the tolerances of the solver. This is also the case in
Figure \ref{fig:sine2CH_VD_dev} where it is near the tolerance $10^{-10}$.
Speaking of these invariants, we point out the role reversal of the total momentum and energy for the
variational scheme and the invariant-preserving finite difference scheme:
the total energy is a linear invariant for \eqref{eq:VD_h} and a quadratic invariant for \eqref{eq:fd2CH},
while the total momentum is a quadratic invariant for \eqref{eq:VD_h} and a linear invariant for \eqref{eq:fd2CH}.

\section*{Summary}
We have applied the novel variational Lagrangian scheme \eqref{eq:VD_H} to several numerical examples.
In general it performed well and displayed consistent convergence rates.
In particular we saw rate 1 in both $\Ltwo$- and $\Hone$-norm for smooth reference solutions, while for the
more irregular peakon reference solutions we observed rate 1 in $\Ltwo$-norm and rate 0.5 in $\Hone$-norm.
Due to its rather simple discretization of the energy, it comes as no surprise
that other higher-order methods outperform \eqref{eq:VD_H} for smooth reference solutions.
However, it is for the more irregular examples involving wave breaking that this scheme stands apart, exhibiting
consistent convergence even in $\Hone$-norm where other methods may struggle with oscillations.

When it comes to extensions of this work there are several possible paths, and we mention those most apparent.
An obvious question is whether the scheme could be improved by choosing a more refined discrete energy for the
variational derivation, and if there are choices other than the multipeakons which lead to an integrable
discrete system.
Another extension is to make the method fully discrete, in the sense that one introduces a tailored method to
integrate in time, preferably one that respects the conserved quantities of the system.
Finally, one could consider developing a specific redistribution algorithm which can handle the potential
clustering of characteristics and prevent artificial numerical collisions when such a Lagrangian method is run
over long time intervals.


\subsection*{Acknowledgments:}
The authors would like to express their utmost gratitude to Xavier Raynaud for ideas and feedback which greatly improved this
paper.
The first author would also like to thank Elena Celledoni and S\o lve Eidnes for helpful discussions.


\begin{thebibliography}{10}
  
  \bibitem{AnDoMi2019}
  D.~C. Antonopoulos, V.~A. Dougalis, and D.~E. Mitsotakis.
  \newblock Error estimates for {G}alerkin finite element methods for the
  {C}amassa-{H}olm equation.
  \newblock {\em Numer. Math.}, 142(4):833--862, 2019.
  
  \bibitem{arnold1999topological}
  V.~I. Arnold and B.~A. Khesin.
  \newblock {\em Topological methods in hydrodynamics}, volume 125 of {\em
    Applied Mathematical Sciences}.
  \newblock Springer-Verlag, New York, 1998.
  
  \bibitem{NumSim_ArtSchr2006}
  R.~Artebrant and H.~J. Schroll.
  \newblock Numerical simulation of {C}amassa-{H}olm peakons by adaptive
  upwinding.
  \newblock {\em Appl. Numer. Math.}, 56(5):695--711, 2006.
  
  \bibitem{NumEvol}
  U.~M. Ascher.
  \newblock {\em Numerical methods for evolutionary differential equations},
  volume~5 of {\em Computational Science \& Engineering}.
  \newblock Society for Industrial and Applied Mathematics (SIAM), Philadelphia,
  PA, 2008.
  
  \bibitem{ChebSpec}
  J.~P. Boyd.
  \newblock {\em Chebyshev and {F}ourier spectral methods}.
  \newblock Dover Publications, Inc., Mineola, NY, second edition, 2001.
  
  \bibitem{bressan2007global}
  A.~Bressan and A.~Constantin.
  \newblock Global conservative solutions of the {C}amassa-{H}olm equation.
  \newblock {\em Arch. Ration. Mech. Anal.}, 183(2):215--239, 2007.
  
  \bibitem{Camassa2003}
  R.~Camassa.
  \newblock Characteristics and the initial value problem of a completely
  integrable shallow water equation.
  \newblock {\em Discrete Contin. Dyn. Syst. Ser. B}, 3(1):115--139, 2003.
  
  \bibitem{Camassa1993}
  R.~Camassa and D.~D. Holm.
  \newblock An integrable shallow water equation with peaked solitons.
  \newblock {\em Phys. Rev. Lett.}, 71(11):1661--1664, 1993.
  
  \bibitem{CamHuaLee2005}
  R.~Camassa, J.~Huang, and L.~Lee.
  \newblock On a completely integrable numerical scheme for a nonlinear
  shallow-water wave equation.
  \newblock {\em J. Nonlinear Math. Phys.}, 12(suppl. 1):146--162, 2005.
  
  \bibitem{Camassa2008}
  R.~Camassa and L.~Lee.
  \newblock Complete integrable particle methods and the recurrence of initial
  states for a nonlinear shallow-water wave equation.
  \newblock {\em J. Comput. Phys.}, 227(15):7206--7221, 2008.
  
  \bibitem{CheLiuPen2012}
  A.~Chertock, J.-G. Liu, and T.~Pendleton.
  \newblock Convergence of a particle method and global weak solutions of a
  family of evolutionary {PDE}s.
  \newblock {\em SIAM J. Numer. Anal.}, 50(1):1--21, 2012.
  
  \bibitem{CheLiuPen2015}
  A.~Chertock, J.-G. Liu, and T.~Pendleton.
  \newblock Elastic collisions among peakon solutions for the {C}amassa-{H}olm
  equation.
  \newblock {\em Appl. Numer. Math.}, 93:30--46, 2015.
  
  \bibitem{MR2192287}
  G.~M. Coclite, H.~Holden, and K.~H. Karlsen.
  \newblock Global weak solutions to a generalized hyperelastic-rod wave
  equation.
  \newblock {\em SIAM J. Math. Anal.}, 37(4):1044--1069, 2005.
  
  \bibitem{Coclite2008}
  G.~M. Coclite, K.~H. Karlsen, and N.~H. Risebro.
  \newblock A convergent finite difference scheme for the {C}amassa-{H}olm
  equation with general {$H^1$} initial data.
  \newblock {\em SIAM J. Numer. Anal.}, 46(3):1554--1579, 2008.
  
  \bibitem{Cohen2014}
  D.~Cohen, T.~Matsuo, and X.~Raynaud.
  \newblock A multi-symplectic numerical integrator for the two-component
  {C}amassa--{H}olm equation.
  \newblock {\em J. Nonlinear Math. Phys.}, 21(3):442--453, 2014.
  
  \bibitem{Cohen2008}
  D.~Cohen, B.~Owren, and X.~Raynaud.
  \newblock Multi-symplectic integration of the {C}amassa--{H}olm equation.
  \newblock {\em J. Comput. Phys.}, 227(11):5492--5512, 2008.
  
  \bibitem{CohRay2011diff}
  D.~Cohen and X.~Raynaud.
  \newblock Geometric finite difference schemes for the generalized
  hyperelastic-rod wave equation.
  \newblock {\em J. Comput. Appl. Math.}, 235(8):1925--1940, 2011.
  
  \bibitem{CohRay2012num}
  D.~Cohen and X.~Raynaud.
  \newblock Convergent numerical schemes for the compressible hyperelastic rod
  wave equation.
  \newblock {\em Numer. Math.}, 122(1):1--59, 2012.
  
  \bibitem{constantin1998global}
  A.~Constantin and J.~Escher.
  \newblock Global existence and blow-up for a shallow water equation.
  \newblock {\em Ann. Scuola Norm. Sup. Pisa Cl. Sci. (4)}, 26(2):303--328, 1998.
  
  \bibitem{constantin1998wave}
  A.~Constantin and J.~Escher.
  \newblock Wave breaking for nonlinear nonlocal shallow water equations.
  \newblock {\em Acta Math.}, 181(2):229--243, 1998.
  
  \bibitem{ConsIvan2008}
  A.~Constantin and R.~I. Ivanov.
  \newblock On an integrable two-component {C}amassa-{H}olm shallow water system.
  \newblock {\em Phys. Lett. A}, 372(48):7129--7132, 2008.
  
  \bibitem{cons:01b}
  A.~Constantin and B.~Kolev.
  \newblock Least action principle for an integrable shallow water equation.
  \newblock {\em J. Nonlinear Math. Phys.}, 8(4):471--474, 2001.
  
  \bibitem{MR2016696}
  A.~Constantin and B.~Kolev.
  \newblock Geodesic flow on the diffeomorphism group of the circle.
  \newblock {\em Comment. Math. Helv.}, 78(4):787--804, 2003.
  
  \bibitem{constantin2000global}
  A.~Constantin and L.~Molinet.
  \newblock Global weak solutions for a shallow water equation.
  \newblock {\em Comm. Math. Phys.}, 211(1):45--61, 2000.
  
  \bibitem{MR1811323}
  H.-H. Dai and Y.~Huo.
  \newblock Solitary shock waves and other travelling waves in a general
  compressible hyperelastic rod.
  \newblock {\em R. Soc. Lond. Proc. Ser. A Math. Phys. Eng. Sci.},
  456(1994):331--363, 2000.
  
  \bibitem{EckKos2014}
  J.~Eckhardt and A.~Kostenko.
  \newblock An isospectral problem for global conservative multi-peakon solutions
  of the {C}amassa-{H}olm equation.
  \newblock {\em Comm. Math. Phys.}, 329(3):893--918, 2014.
  
  \bibitem{EckKos2018}
  J.~Eckhardt and A.~Kostenko.
  \newblock The inverse spectral problem for periodic conservative multi-peakon
  solutions of the {C}amassa-{H}olm equation.
  \newblock {\em Int.~Mat.~Res.~Notices}, 07 2018.
  \newblock \url{https://doi.org/10.1093/imrn/rny176}.
  
  \bibitem{Fokas1981}
  B.~Fuchssteiner and A.~S. Fokas.
  \newblock Symplectic structures, their {B}\"{a}cklund transformations and
  hereditary symmetries.
  \newblock {\em Phys. D}, 4(1):47--66, 1981.
  
  \bibitem{vardisc}
  S.~T. Galtung and X.~Raynaud.
  \newblock Well-posedness and convergence of a variational discretization of the
  {C}amassa--{H}olm equation, 2020.
  \newblock \arxiv{2003.03114}.
  
  \bibitem{Grunert2015}
  K.~Grunert.
  \newblock Blow-up for the two-component {C}amassa-{H}olm system.
  \newblock {\em Discrete Contin. Dyn. Syst.}, 35(5):2041--2051, 2015.
  
  \bibitem{GruHol2016}
  K.~Grunert and H.~Holden.
  \newblock The general peakon-antipeakon solution for the {C}amassa-{H}olm
  equation.
  \newblock {\em J. Hyperbolic Differ. Equ.}, 13(2):353--380, 2016.
  
  \bibitem{Grunert2012}
  K.~Grunert, H.~Holden, and X.~Raynaud.
  \newblock Global solutions for the two-component {C}amassa--{H}olm system.
  \newblock {\em Comm. Partial Differential Equations}, 37(12):2245--2271, 2012.
  
  \bibitem{Grunert2013}
  K.~Grunert, H.~Holden, and X.~Raynaud.
  \newblock Periodic conservative solutions for the two-component
  {C}amassa--{H}olm system.
  \newblock In {\em Spectral analysis, differential equations and mathematical
    physics: a festschrift in honor of {F}ritz {G}esztesy's 60th birthday},
  volume~87 of {\em Proc. Sympos. Pure Math.}, pages 165--182. Amer. Math.
  Soc., Providence, RI, 2013.
  
  \bibitem{grunert2020numerical}
  K.~Grunert, A.~Nordli, and S.~Solem.
  \newblock Numerical conservative solutions of the {H}unter--{S}axton equation,
  2020.
  \newblock \arxiv{2005.03882}.
  
  \bibitem{GeoNumInt}
  E.~Hairer, C.~Lubich, and G.~Wanner.
  \newblock {\em Geometric numerical integration}, volume~31 of {\em Springer
    Series in Computational Mathematics}.
  \newblock Springer-Verlag, Berlin, second edition, 2006.
  \newblock Structure-preserving algorithms for ordinary differential equations.
  
  \bibitem{Holden2006}
  H.~Holden and X.~Raynaud.
  \newblock Convergence of a finite difference scheme for the {C}amassa-{H}olm
  equation.
  \newblock {\em SIAM J. Numer. Anal.}, 44(4):1655--1680, 2006.
  
  \bibitem{Raynaud2006}
  H.~Holden and X.~Raynaud.
  \newblock A convergent numerical scheme for the {C}amassa-{H}olm equation based
  on multipeakons.
  \newblock {\em Discrete Contin. Dyn. Syst.}, 14(3):505--523, 2006.
  
  \bibitem{holden2007globalmp}
  H.~Holden and X.~Raynaud.
  \newblock Global conservative multipeakon solutions of the {C}amassa-{H}olm
  equation.
  \newblock {\em J. Hyperbolic Differ. Equ.}, 4(1):39--64, 2007.
  
  \bibitem{holden2007global}
  H.~Holden and X.~Raynaud.
  \newblock Global conservative solutions of the {C}amassa-{H}olm equation---a
  {L}agrangian point of view.
  \newblock {\em Comm. Partial Differential Equations}, 32(10-12):1511--1549,
  2007.
  
  \bibitem{MR2292515}
  H.~Holden and X.~Raynaud.
  \newblock Global conservative solutions of the generalized hyperelastic-rod
  wave equation.
  \newblock {\em J. Differential Equations}, 233(2):448--484, 2007.
  
  \bibitem{NumMP_HolRay2008}
  H.~Holden and X.~Raynaud.
  \newblock A numerical scheme based on multipeakons for conservative solutions
  of the {C}amassa-{H}olm equation.
  \newblock In {\em Hyperbolic problems: theory, numerics, applications}, pages
  873--881. Springer, Berlin, 2008.
  
  \bibitem{HolRay2008}
  H.~Holden and X.~Raynaud.
  \newblock Periodic conservative solutions of the {C}amassa-{H}olm equation.
  \newblock {\em Ann. Inst. Fourier (Grenoble)}, 58(3):945--988, 2008.
  
  \bibitem{Kalisch2005}
  H.~Kalisch and J.~Lenells.
  \newblock Numerical study of traveling-wave solutions for the {C}amassa--{H}olm
  equation.
  \newblock {\em Chaos Solitons Fractals}, 25(2):287--298, 2005.
  
  \bibitem{Kalisch2006}
  H.~Kalisch and X.~Raynaud.
  \newblock Convergence of a spectral projection of the {C}amassa-{H}olm
  equation.
  \newblock {\em Numer. Methods Partial Differential Equations},
  22(5):1197--1215, 2006.
  
  \bibitem{MR1674267}
  S.~Kouranbaeva.
  \newblock The {C}amassa-{H}olm equation as a geodesic flow on the
  diffeomorphism group.
  \newblock {\em J. Math. Phys.}, 40(2):857--868, 1999.
  
  \bibitem{MR2125239}
  J.~Lenells.
  \newblock Conservation laws of the {C}amassa-{H}olm equation.
  \newblock {\em J. Phys. A}, 38(4):869--880, 2005.
  
  \bibitem{Lenells2005}
  J.~Lenells.
  \newblock Traveling wave solutions of the {C}amassa--{H}olm equation.
  \newblock {\em J. Differential Equations}, 217(2):393--430, 2005.
  
  \bibitem{LiuPen2016}
  H.~Liu and T.~Pendleton.
  \newblock On invariant-preserving finite difference schemes for the
  {C}amassa-{H}olm equation and the two-component {C}amassa-{H}olm system.
  \newblock {\em Commun. Comput. Phys.}, 19(4):1015--1041, 2016.
  
  \bibitem{Matsuo2010}
  T.~Matsuo.
  \newblock A {H}amiltonian-conserving {G}alerkin scheme for the {C}amassa-{H}olm
  equation.
  \newblock {\em J. Comput. Appl. Math.}, 234(4):1258--1266, 2010.
  
  \bibitem{MatrixApplied}
  C.~Meyer.
  \newblock {\em Matrix analysis and applied linear algebra}.
  \newblock Society for Industrial and Applied Mathematics (SIAM), Philadelphia,
  PA, 2000.
  
  \bibitem{Molinari2008}
  L.~G. Molinari.
  \newblock Determinants of block tridiagonal matrices.
  \newblock {\em Linear Algebra Appl.}, 429(8-9):2221--2226, 2008.
  
  \bibitem{Olver1996}
  P.~J. Olver and P.~Rosenau.
  \newblock Tri-{H}amiltonian duality between solitons and solitary-wave
  solutions having compact support.
  \newblock {\em Phys. Rev. E (3)}, 53(2):1900--1906, 1996.
  
  \bibitem{JacobiOperator}
  G.~Teschl.
  \newblock {\em Jacobi operators and completely integrable nonlinear lattices},
  volume~72 of {\em Mathematical Surveys and Monographs}.
  \newblock American Mathematical Society, Providence, RI, 2000.
  
  \bibitem{SpectralMatlab}
  L.~N. Trefethen.
  \newblock {\em Spectral methods in {MATLAB}}, volume~10 of {\em Software,
    Environments, and Tools}.
  \newblock Society for Industrial and Applied Mathematics (SIAM), Philadelphia,
  PA, 2000.
  
  \bibitem{Wahlen2006}
  E.~Wahl\'{e}n.
  \newblock The interaction of peakons and antipeakons.
  \newblock {\em Dyn. Contin. Discrete Impuls. Syst. Ser. A Math. Anal.},
  13(3-4):465--472, 2006.
  
  \bibitem{XuShu2008}
  Y.~Xu and C.-W. Shu.
  \newblock A local discontinuous {G}alerkin method for the {C}amassa-{H}olm
  equation.
  \newblock {\em SIAM J. Numer. Anal.}, 46(4):1998--2021, 2008.
  
\end{thebibliography}
\end{document}